\tikzstyle{bsq}=[rectangle, draw, thick, minimum width=.5cm, minimum height=.5cm]
\tikzstyle{bver}=[rectangle, draw, thick, minimum width=1cm, minimum height=2cm]
\tikzstyle{bhor}=[rectangle, draw, thick, minimum width=2cm, minimum height=1cm]
\newtheorem{theorem}{Theorem}[section]
\newtheorem{definition}[theorem]{Definition}
\newtheorem{lemma}[theorem]{Lemma}
\newtheorem{corollary}[theorem]{Corollary}
\newtheorem{proposition}[theorem]{Proposition}
\newtheorem{question}[theorem]{Question}
\newtheorem{varexample}[theorem]{Example}
\theoremstyle{definition}
\newtheorem{remark}[theorem]{Remark}
\newcommand{\Spec}{\mathrm{Spec}\,}
\def\supp{\mathrm{supp}}
\newcommand{\cL}{\mathcal{L}}
\newcommand{\ord}{\operatorname{ord}}
\newcommand{\Trop}{\operatorname{Trop}}
\newcommand{\trop}{\operatorname{trop}}
\newcommand{\ddiv}{\operatorname{div}}
\newcommand{\Div}{\operatorname{Div}}
\newcommand{\PL}{\operatorname{PL}}
\newcommand{\val}{\operatorname{val}}
\newcommand{\Pic}{\operatorname{Pic}}
\newcommand{\Sym}{\operatorname{Sym}}
\newcommand{\an}{\mathrm{an}}
\newenvironment{example}{\begin{varexample}
\begin{normalfont}}{\end{normalfont}
\end{varexample}}
\begin{document}
\title{Tropical linear series and tropical independence}
\author[D. Jensen]{David Jensen}
\address{David Jensen: Department of Mathematics,  University of Kentucky \hfill \newline\texttt{}
\indent 733 Patterson Office Tower,
Lexington, KY 40506--0027, USA}
\email{{\tt dave.jensen@uky.edu}}
\author[S. Payne]{Sam Payne}
\address{Sam Payne: Department of Mathematics,  University of Texas at Austin \hfill \newline\texttt{}
\indent 2515 Speedway, PMA 8.100,
Austin, TX 78712, USA}
\email{{\tt sampayne@utexas.edu}}
\date{}
\bibliographystyle{alpha}

\maketitle

\begin{abstract}
We propose a definition of tropical linear series that isolates some of the essential combinatorial properties of tropicalizations of not-necessarily-complete linear series on algebraic curves. The definition combines the Baker-Norine notion of rank with the notion of tropical independence and has the property that the restriction of a tropical linear series of rank $r$ to a connected subgraph is a tropical linear series of rank $r$.  We show that tropical linear series of rank 1 are finitely generated as tropical modules and state a number of open problems related to algebraic, combinatorial, and topological properties of higher rank tropical linear series.
\end{abstract}

\section{Introduction}
\label{Sec:Intro}

Fifteen years ago, Baker and Norine introduced the rank of a divisor on a graph, in close analogy with the rank of a divisor on an algebraic curve, and proved the surprising and beautiful fact that it satisfies a precise analogue of the Riemann-Roch Theorem \cite{BakerNorine07}.  This breakthrough inspired several new directions of research.  Baker proved the Specialization Lemma \cite{Baker08} relating ranks of divisors on curves to those on graphs via semistable degenerations, and outlined a program for relating divisor theory on graphs to the celebrated results of Brill-Noether theory on algebraic curves.  All of these results extend naturally to tropical curves (metric graphs) \cite{GathmannKerber08, MikhalkinZharkov08}, with specialization given by retraction to the skeleton of the Berkovich analytifications of curves over valued fields.  Much of Baker's program has now been carried out, including tropical proofs of the Brill-Noether and Gieseker-Petri Theorems \cite{tropicalBN, tropicalGP}. 

The tropical method envisioned by Baker, using combinatorics of divisors on graphs to prove results about divisors on general algebraic curves, is particularly powerful when used in combination with other tools.  For instance, in combination with deformation theory and log geometry, it led to a proof of Pflueger's conjecture \cite{Pflueger17b} for the dimension of the space of linear series of given degree and rank on a general curve of fixed gonality \cite{JensenRanganathan21}, opening the door to a range of subsequent developments in Hurwitz-Brill-Noether theory \cite{CPJ20, Larson21, LLV}.  

The present paper is an attempt to isolate and formalize the new combinatorial structures that arise in further applications of tropical geometry to linear series on algebraic curves and geometry of moduli spaces, as in \cite{M23, M13}.

\subsection{Algebraic and tropical complete linear series} Given a divisor $D_C$ on a curve $C$, the \emph{complete linear series} $\mathcal{L}(D_C)$ is the set of nonzero rational functions on $C$ with poles bounded by $D_C$, together with 0.  
If $C$ is defined over a valued field whose skeleton is a metric graph $\Gamma$, then the tropicalizations of nonzero rational functions on $C$ are precisely the continuous PL functions with integer slopes on $\Gamma$ \cite[Theorem~1.1]{BakerRabinoff15}. The bend locus of a PL function is the analogue of the zeros and poles of a rational function, with the analogue of order of vanishing given by the sum of the incoming slopes at a point. These notions are compatible with specialization, i.e., the sum of the incoming slopes of $\trop(f)$ at a point is the sum of the multiplicities of the zeros and poles of $f$ that specialize to that point.

Thus one naturally defines the \emph{complete linear series} of a divisor $D$ on $\Gamma$ to be
\[
R(D) := \{ \varphi \in \PL (\Gamma) \mbox{ } \vert \mbox{ } \ddiv (\varphi) + D \geq 0 \} ,
\]
and if $D = \Trop(D_C)$ then $\trop(\mathcal{L}(D_C)) \subseteq R(D)$.  Note that $R(D)$ is a tropical module, meaning that it is closed under scalar addition and pointwise minimum, i.e.,  the tropical analogue of a vector space.  It also comes with a polyhedral structure, and it is both finitely generated as a tropical module, and finite dimensional as a polyhedral complex \cite{HMY12}. However, neither the minimal number of generators nor the polyhedral dimension typically agrees with the rank of $D$. Most graphs $\Gamma$ have divisors $D$ such that the dimension of $R(D)$ is larger than the rank of $D$, and in such cases $R(D)$ is not the tropicalization of any linear series.   See Examples~\ref{Ex:Lollipop} and~\ref{Ex:Barbell}. 

More recent applications of tropical geometry to linear series on algebraic curves \cite{M23, M13} require combinatorial objects that capture more information about tropicalizations of algebraic linear series, including proper linear subspaces of $\mathcal{L}(D_C)$, which are called \emph{incomplete linear series}.  The properties of tropicalizations of possibly incomplete linear series that are most important to these applications depend on the notion of tropical independence that was introduced in \cite{tropicalGP}.

\subsection{Tropical dependence and independence} A tropical linear combination of a collection of functions $\{\varphi_0, \ldots, \varphi_r\} \subset \PL(\Gamma)$ is an expression
\begin{equation}\label{eq:vartheta}
\vartheta = \min \{ \varphi_0 + a_0, \ldots, \varphi_r + a_r \},
\end{equation}
where $a_0, \ldots, a_r$ are real numbers.  Note that we consider the tuple $(a_0, \ldots, a_r)$ to be part of the data of a tropical linear combination, even though different tuples may give rise to the same pointwise minimum $\vartheta$.  

\begin{definition}
The set $\{\varphi_0, \ldots, \varphi_r\} \subset \PL(\Gamma)$ is \emph{dependent} if there is a tropical linear combination $\vartheta$ such that the minimum in \eqref{eq:vartheta} is achieved at least twice at every point in $\Gamma$.
\end{definition}

Such a tropical linear combination is a \emph{dependence}.  In other words, $\vartheta$ is a dependence if 
\[
\vartheta = \min_{j \neq i} \{\varphi_j + a_j \} \mbox{ \ for each \ } 0 \leq i \leq r.
\]
Equivalently, $\vartheta$ is a dependence if no term in \eqref{eq:vartheta} achieves the minimum uniquely at any point of $\Gamma$.  If no such dependence exists, then $\{ \varphi_0, \ldots, \varphi_r\}$ is \emph{independent}.

\begin{lemma}[{\cite[Theorem~1.6]{M23}}]
Suppose there is some $\vartheta = \min \{ \varphi_0 + a_0, \ldots, \varphi_r + a_r \}$ such that 
\begin{equation} \label{eq:independence}
\vartheta \neq \min_{j \neq i} \{\varphi_j + a_j\}, \mbox{ \  for each \ } 0 \leq i \leq r.
\end{equation}
Then $\{ \varphi_0, \ldots, \varphi_r\}$ is independent.
\end{lemma}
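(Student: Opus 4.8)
The plan is to argue by contradiction. Suppose, contrary to the conclusion, that $\{\varphi_0, \ldots, \varphi_r\}$ admits a dependence $\vartheta' = \min\{\varphi_0 + b_0, \ldots, \varphi_r + b_r\}$. I would then produce a single point of $\Gamma$ at which the minimum defining $\vartheta'$ is achieved uniquely, which contradicts the definition of a dependence and completes the proof.

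First I would unwind the hypothesis into a pointwise statement. For each index $i$, the assumed inequality $\vartheta \neq \min_{j \neq i}\{\varphi_j + a_j\}$ says that these two PL functions on $\Gamma$ are not equal; since $\vartheta \leq \min_{j\neq i}\{\varphi_j + a_j\}$ everywhere, there must be a point $p_i \in \Gamma$ where the inequality is strict, i.e.\ $\varphi_i(p_i) + a_i < \varphi_j(p_i) + a_j$ for every $j \neq i$. Thus for each $i$ we have a point $p_i$ at which the $i$-th term of $\vartheta$ is the strict, unique minimizer. Next, choose an index $i_0$ minimizing the difference $b_i - a_i$, so that $b_j - a_j \geq b_{i_0} - a_{i_0}$ for all $j$, and set $p = p_{i_0}$. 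For each $j \neq i_0$, adding $b_j - a_j$ to the strict inequality $\varphi_{i_0}(p) + a_{i_0} < \varphi_j(p) + a_j$ and then using $b_j - a_j \geq b_{i_0} - a_{i_0}$ yields
\[
\varphi_{i_0}(p) + b_{i_0} = \bigl(\varphi_{i_0}(p) + a_{i_0}\bigr) + \bigl(b_{i_0} - a_{i_0}\bigr) < \bigl(\varphi_j(p) + a_j\bigr) + \bigl(b_j - a_j\bigr) = \varphi_j(p) + b_j .
\]
Hence at $p$ the minimum in the expression for $\vartheta'$ is attained only by the $i_0$-th term, so $\vartheta'(p) < \min_{j \neq i_0}\{\varphi_j(p) + b_j\}$; in particular $\vartheta' \neq \min_{j \neq i_0}\{\varphi_j + b_j\}$, contradicting the assumption that $\vartheta'$ is a dependence. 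Therefore no dependence exists and $\{\varphi_0, \ldots, \varphi_r\}$ is independent.

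I do not expect a serious obstacle here: the argument uses only the definitions. The one point requiring care is the passage from the functional inequality $\vartheta \neq \min_{j\neq i}\{\varphi_j + a_j\}$ to the existence of a point where a single term is the strict minimizer, and the crux of the proof is the observation that selecting $i_0$ to minimize $b_i - a_i$ is exactly what transports the strict-uniqueness witness $p_{i_0}$ from the tropical linear combination with coefficients $(a_j)$ to the one with coefficients $(b_j)$.
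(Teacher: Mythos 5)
Your proof is correct. The paper does not reprove this lemma (it simply cites \cite[Theorem~1.6]{M23}), and your argument --- extracting, for each $i$, a point $p_i$ where $\varphi_i + a_i$ is the strict unique minimizer, and then transporting the witness at the index minimizing $b_i - a_i$ to contradict the purported dependence --- is exactly the standard argument for this direction in the cited source.
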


\noindent We say that a tropical linear combination $\vartheta$ satisfying \eqref{eq:independence} is a \emph{certificate of independence}.  In other words, $\vartheta$ is a certificate of independence if each of its tropical summands $\varphi_j + a_j$ achieves the minimum uniquely in \eqref{eq:vartheta} at some point of $\Gamma$. Constructing a certificate of independence can be an efficient way to prove that a given set $\{\varphi_0, \ldots, \varphi_r\} \subset \PL(\Gamma)$ is independent, and the following theorem guarantees that such certificates exist for any independent set.

\begin{theorem}[{\cite[Theorem~1.6]{M23}}]
\label{Thm:Ind}
Suppose $\{\varphi_0, \ldots, \varphi_r\} \subset \PL(\Gamma)$ is independent.  Then there are real numbers $a_0, \ldots, a_r$ such that $\vartheta = \min\{ \varphi_0 + a_0, \ldots, \varphi_r + a_r \}$ is a certificate of independence.  
 \end{theorem}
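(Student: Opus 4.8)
The plan is to induct on the number of functions, $r+1$. The base case $r=0$ is immediate: a single function $\varphi_0$ is automatically independent, and the tuple $(0)$ is a certificate, since with one summand the minimum in \eqref{eq:vartheta} is attained uniquely at every point of $\Gamma$. For the inductive step one first records that any nonempty subset of an independent set is again independent: a dependence of a subset extends to a dependence of the whole set by giving the remaining functions coefficients large enough that their summands never attain the minimum. So, assuming the statement for fewer than $r+1$ functions and given $\{\varphi_0,\dots,\varphi_r\}$ independent, the inductive hypothesis supplies a certificate $(b_1,\dots,b_r)$ for $\{\varphi_1,\dots,\varphi_r\}$; concretely, for each $j\ge 1$ the open set $R_j=\{x\in\Gamma : \varphi_j(x)+b_j<\varphi_k(x)+b_k \text{ for all } k\ge 1,\ k\ne j\}$ is nonempty. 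It remains to choose a coefficient $a_0$ so that $(a_0,b_1,\dots,b_r)$ is a certificate for the whole collection.

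Let $\vartheta_0=\min\{\varphi_1+b_1,\dots,\varphi_r+b_r\}$. Unwinding definitions, the summand $\varphi_0+a_0$ attains the minimum of $\{\varphi_0+a_0,\varphi_1+b_1,\dots,\varphi_r+b_r\}$ uniquely at some point of $\Gamma$ precisely when $a_0<\beta:=\max_{x\in\Gamma}\big(\vartheta_0(x)-\varphi_0(x)\big)$, while the summand $\varphi_j+b_j$ still attains that minimum uniquely at some point precisely when $a_0>\gamma_j:=b_j-\sup_{x\in R_j}\big(\varphi_0(x)-\varphi_j(x)\big)$ (here one uses that a point where $\varphi_j+b_j$ is the strict overall minimum must lie in $R_j$). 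Hence a suitable $a_0$ exists if and only if $\max_j\gamma_j<\beta$. Since $\vartheta_0$ agrees with $\varphi_j+b_j$ on $\overline{R_j}$, one has $\beta\ge b_j-\inf_{x\in R_j}\big(\varphi_0(x)-\varphi_j(x)\big)\ge\gamma_j$ for every $j$, so the inequality $\max_j\gamma_j<\beta$ holds automatically unless, for the index $j$ realizing $\max_j\gamma_j$, the function $\varphi_0-\varphi_j$ is constant on $R_j$ and $\beta$ is attained on $\overline{R_j}$. This is the only obstruction.

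I expect this degenerate case to be the crux, and it is where independence must re-enter: since $\{\varphi_0,\varphi_j\}$ is independent, $\varphi_0-\varphi_j$ is not constant on all of $\Gamma$, so there is a point of $\Gamma$ where it is not locally constant. The idea is to perturb the subset certificate $(b_1,\dots,b_r)$ — which we may do freely within the open set of certificates of $\{\varphi_1,\dots,\varphi_r\}$ — so as to enlarge or slide $R_j$ until it meets such a point, making $\varphi_0-\varphi_j$ nonconstant on $R_j$; decreasing $b_j$ by a small amount enlarges $R_j$ while shrinking the other $R_k$ only slightly, and a finite sequence of such moves, controlled by the slopes of $\varphi_0-\varphi_j$ along $\partial R_j$, should produce a certificate with $\varphi_0-\varphi_k$ nonconstant on $R_k$ for every $k$, at which point the second paragraph applies. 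An alternative, and possibly cleaner, route is to reformulate the whole problem via linear duality: a certificate exists exactly when there are points $x_0,\dots,x_r\in\Gamma$ for which the system of strict inequalities $a_i-a_j<\varphi_j(x_i)-\varphi_i(x_i)$ is solvable in $(a_0,\dots,a_r)$, i.e.\ has no directed cycle of nonpositive total weight; if no such tuple of points exists, one would extract from the resulting family of nonpositive cycles a tropical linear combination in which the minimum is attained at least twice everywhere, contradicting independence. I would carry out the induction first and fall back on the duality formulation to dispatch the degeneracies.
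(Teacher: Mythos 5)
The paper itself does not prove this statement: it cites \cite[Theorem~1.6]{M23} and remarks that the proof there rests on the Knaster--Kuratowski--Mazurkiewicz lemma, so your inductive route is a genuinely different strategy. Your reduction is correct as far as it goes: the base case, the hereditary property of independence, and the computation showing that a fixed subset certificate $(b_1,\dots,b_r)$ extends by some choice of $a_0$ if and only if $\max_j \gamma_j < \beta$ (equivalently, for each $j$ the restriction of $\vartheta_0 - \varphi_0$ to $R_j$ is not identically equal to its global maximum) are all sound. The gap is that you never actually dispose of the degenerate case, and that is where the entire difficulty of the theorem lives. The perturbation argument is only a heuristic: every move must keep $(b_1,\dots,b_r)$ inside the open set of certificates of $\{\varphi_1,\dots,\varphi_r\}$ (otherwise some $R_k$ becomes empty and no $a_0$ can work), and that constraint limits how far any single $R_j$ can be enlarged; a point where $\varphi_0-\varphi_j$ fails to be locally constant may be unreachable by $R_j$ without annihilating another $R_k$; and even after fixing one index you must control all indices simultaneously, since enlarging $R_j$ changes every $R_k$ and can create degeneracy elsewhere. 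Nothing in the write-up rules out the possibility that \emph{every} subset certificate is degenerate. In the small examples one can compute, universal degeneracy does appear to force a tropical dependence among $\{\varphi_0,\dots,\varphi_r\}$ --- but that implication is exactly the content of the theorem, and you have not proved it.

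The duality fallback concentrates the same missing step into one sentence: from the hypothesis that for every tuple $(x_0,\dots,x_r)\in\Gamma^{r+1}$ the associated weighted digraph has a nonpositive directed cycle, you must produce a single tuple $(a_0,\dots,a_r)$ realizing a dependence, i.e., one that works simultaneously at all points of $\Gamma$. That is a minimax/selection statement over the infinite parameter space $\Gamma^{r+1}$, and it is precisely the step for which \cite{M23} invokes the KKM lemma; ``one would extract'' is not an argument. To salvage the proof you would need either to carry out this extraction (compactness of $\Gamma$ plus a careful analysis of the polyhedral structure of the set of certificates), or to show directly that an independent set always admits a nondegenerate subset certificate. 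Either would yield a new, more elementary proof of the theorem; as written, neither is established.
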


\noindent Theorem~\ref{Thm:Ind} is a consequence of the Knaster-Kuratowski-Mazurkiewicz lemma, a set-covering variant of the Brouwer fixed-point theorem.  The fact that independence is equivalent to the existence of such a special tropical linear combination, i.e. a certificate of independence, has no analogue in classical linear algebra.

\medskip

Applications of tropical independence are based on the following version of specialization.  If $f_0, \ldots, f_r$ are rational functions on a curve over a valued field with skeleton $\Gamma$, and if $c_0 f_0 + \cdots + c_r f_r$ is a linear dependence, then $\vartheta = \min\{ \trop(f_0) + \val (c_0), \ldots, \trop(f_r) + \val(c_r) \}$ is a tropical dependence.  Hence the independence of $\{\trop(f_0), \ldots, \trop(f_r)\}$ is a sufficient condition for linear independence of  $\{f_0, \ldots, f_r\}$.  Many fundamental questions about the geometry of algebraic curves can be expressed in terms of ranks of linear maps between linear series, and constructing a certificate of independence for the tropicalizations of some collection of rational functions in the image gives a lower bound on these ranks.  Note that the image is a possibly incomplete linear series.

\subsection{Tropicalizations of possibly incomplete linear series}

Let $D$ be the tropicalization of a divisor $D_C$ and let $\Sigma$ be the tropicalization of a linear series $V \subseteq \mathcal{L}(D_C)$ of rank $r$, which we do not assume to be complete.  Then $\Sigma$ has the following properties (Proposition~\ref{Prop:Tropicalization}):

\begin{itemize}
\item  (rank)  for every effective divisor $E$ of degree $r$, there is a $\varphi \in \Sigma$ such that $\ddiv (\varphi ) + D \geq  E$;
\item  (dependence)  every set of $r+2$ functions in $\Sigma$ is tropically dependent;
\end{itemize}
A consequence of these first two properties is that, for each tangent vector $\zeta$, there are exactly $r+1$ slopes $s_{\zeta} (\varphi)$, for $\varphi \in \Sigma$ (see Lemma~\ref{Lem:Slopes}).  We denote these $r+1$ slopes by
\[
s_\zeta[0] < \cdots < s_\zeta[r] .
\]
The collections of slope vectors $s_\zeta(\Sigma)$ and their combinatorial properties were first studied systematically by Amini, in his work on equidistribution of Weierstrass points \cite[Section~2.1]{Amini14}; they are naturally identified with the vanishing sequences at nodes in the Amini-Baker theory of linear series on metrized complexes of curves \cite{AminiBaker15}.

The set $\Sigma$ also satisfies:
\begin{itemize}
\item  (subseries)  for each tangent vector $\zeta$, the subset $\{ \varphi \in \Sigma \mbox{ } \vert \mbox{ } s_{\zeta} (\varphi) \leq s_{\zeta} [i] \}$
contains the tropicalization of a linear series of rank $i$, for $0 \leq i < r$.
\end{itemize}

These are the essential properties of tropicalizations of linear series that are used for the applications in \cite{M23, M13}.  Only the first is shared by all complete linear series $R(D)$ of rank $r$.  We therefore propose the following definition.  
\begin{definition}
\label{Def:TLS}
Let $D$ be a divisor on a metric graph $\Gamma$.  A \emph{tropical linear series of rank $r$} is a tropical submodule $\Sigma \subseteq R(D)$ satisfying the three properties listed above, i.e.,
\begin{enumerate}[series=conditions]
\item  for every effective divisor $E$ of degree $r$, there is some $\varphi \in \Sigma$ such that $\ddiv (\varphi ) + D \geq  E$;
\item   every set of $r+2$ functions in $\Sigma$ is tropically dependent;
\item  for each tangent vector $\zeta$, the subset $\{ \varphi \in \Sigma \mbox{ } \vert \mbox{ } s_{\zeta} (\varphi) \leq s_{\zeta} [i] \}$
contains a tropical linear series of rank $i$, for $0 \leq i < r$.
\end{enumerate}
\end{definition}

\noindent Note that this definition is recursive in $r$.  When $r = 0$, the last condition is vacuous; a tropical linear series of rank $0$ is a principal submodule of $\PL(\Gamma)$, i.e., $\{ \varphi + a \mbox{ } \vert \mbox{ } a \in \mathbb{R} \}$, for some $\varphi \in \PL(\Gamma)$.

Tropicalizations of linear series satisfy additional geometric and combinatorial conditions, see Proposition~\ref{Prop:Tropicalization} below.  We do not know whether these properties follow from Definition~\ref{Def:TLS}.  Instead, we define a strong tropical linear series to include these additional conditions.  The less restrictive definition above suffices for the arguments in \cite{M23, M13}.  Let $\vert \Sigma \vert$ be the tropical projectivization $\vert \Sigma \vert := \Sigma /\mathbb{R}$, where $\mathbb{R}$ acts by scalar addition.  Note that $\vert \Sigma \vert$ parametrizes the divisors $\{ D + \ddiv(\varphi) \ \vert \ \varphi \in \Sigma\}$, and hence is naturally identified with a subspace of $\Sym^d(\Gamma)$, where $d = \deg (D)$.  Note that $\Sym^d(\Gamma)$ carries a natural polyhedral structure. Roughly speaking, a \emph{definable subset} of $\Sym^d(\Gamma)$ is a Boolean combination of polyhedral subsets defined by inequalities on affine-linear functions with integer slopes.  See Section~\ref{sec:modules} for further details.

\begin{definition}
\label{Def:StrongTLS}
A tropical linear series $\Sigma \subseteq R(D)$ of rank $r$ is called a \emph{strong tropical linear series} if it satisfies the additional properties:
\begin{enumerate}[resume=conditions]
\item  $\vert \Sigma \vert$ is a closed and definable subset of $\vert D \vert$;
\item  there is a valuated matroid $\mathcal{V}$ of rank $r+1$ on $\Sigma$ such that, if $V \in \mathcal{V}$ is a valuated circuit, then 
$\min_{\varphi \in \Sigma} \{ \varphi + V(\varphi) \}$
is a tropical dependence.
\end{enumerate}
\end{definition}

\noindent Note that property (5) of a strong tropical linear series implies property (2) above.

\subsection{Finite generation}

In \cite{HMY12}, Haase, Musiker, and Yu show that the complete tropical linear series $R(D)$ is finitely generated as a tropical module.  However, $R(D)$ typically does not satisfy the conditions of Definition~\ref{Def:TLS}. Thus, most tropical linear series are proper submodules. Finitely generated submodules of $R(D)$ have many useful algebraic and combinatorial properties; see \cite{Luo18}, as well as Lemma~\ref{Lem:FGClosed}.  However, many proper submodules of $R(D)$ are not finitely generated. See Examples~\ref{Ex:HalfOpenInterval} and~\ref{Ex:FG}.  For tropicalizations of linear series, finite generation remains an open question. 

\begin{question} \label{Q:FG}
Are tropicalizations of linear series finitely generated as tropical modules?
\end{question}

The answer is trivially affirmative when $r = 0$.  Here we prove an affirmative answer for $r = 1$.  

\begin{theorem} \label{Thm:Rank1}
Let $\Sigma \subset R(D)$ be a tropical linear series of rank $1$. Then $\Sigma$ is finitely generated.
\end{theorem}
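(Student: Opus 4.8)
The plan is to produce an explicit finite generating set out of the ``most degenerate'' functions of $\Sigma$, and to show that these are organized by a finite polyhedral structure on the tropical projectivization $|\Sigma| \subseteq \Sym^d(\Gamma)$. First I would extract the consequences of the rank-$1$ hypothesis on slopes: by Lemma~\ref{Lem:Slopes} there are exactly two slopes $s_\zeta[0] < s_\zeta[1]$ at every tangent vector $\zeta$, so away from $\supp D$ a function in $\Sigma$ can only bend upward, and only where its slope jumps from the lower to the upper value, while its bend-down locus is contained in the finite set $\supp D$; in particular every individual $\varphi \in \Sigma$ has at most $d$ zeros. I would also record that the restriction of $\Sigma$ to any connected subgraph is again a rank-$1$ tropical linear series, so that questions can be localized to the edges of $\Gamma$.

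Next, for each $x \in \Gamma$ I would single out a canonical function $\varphi_x \in \Sigma$. Condition~(1) supplies some $\varphi$ with $\ddiv\varphi + D \geq x$, and using the rank-$1$ hypothesis one checks that the subset $\{\psi \in \Sigma : \ddiv\psi + D \geq x\}$ is a single translation class, so that $\varphi_x$ — the one whose divisor $D + \ddiv\varphi_x$ is the $x$-reduced divisor in $|\Sigma|$ — is well defined up to an additive constant; normalize $\varphi_x(x) = 0$. I would then show that $\{\varphi_x : x \in \Gamma\}$ generates $\Sigma$: for $\psi \in \Sigma$ set $a_x = -\min_\Gamma(\varphi_x - \psi)$, so that $\min_x\{\varphi_x + a_x\} \geq \psi$; the reverse inequality holds because for each $y \in \Gamma$ the difference $\varphi_y - \psi$ attains its minimum at $y$, which is where reducedness of $D + \ddiv\varphi_y$ together with $\ddiv\psi + D \geq 0$ enters.

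Finally I would prove the finiteness statement: the assignment $x \mapsto \varphi_x$ is piecewise linear with finitely many pieces. Concretely, $\Gamma$ should decompose into finitely many closed segments — with $\supp D$ and the vertices of $\Gamma$ among the endpoints, and with the number of segments bounded in terms of $d$ and the combinatorics of $\Gamma$ — such that along each segment $[x_0, x_1]$ the divisor $D + \ddiv\varphi_x$ varies by transporting the points of its support at constant speed along edges of $\Gamma$, and consequently $\varphi_x$ lies in the tropical span of $\varphi_{x_0}$ and $\varphi_{x_1}$ for $x \in [x_0, x_1]$. Granting this, $\Sigma$ is generated by the finitely many functions $\varphi_{x_0}, \varphi_{x_1}, \dots$ attached to the endpoints of the segments, which is what we want.

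The main obstacle is this last step: one must show that the ``sweep'' $x \mapsto \varphi_x$ genuinely breaks into finitely many linear pieces, i.e.\ that the path traced by the $x$-reduced divisors in $\Sym^d(\Gamma)$ does not oscillate infinitely, and one must carry out the construction of $\varphi_x$ and the generation argument without assuming in advance that $\Sigma$ is closed — closedness of $\Sigma$ should emerge as a consequence of finite generation (cf.\ Lemma~\ref{Lem:FGClosed}) rather than as an input. A convenient way to organize the argument is to first prove the structural and finiteness statements for a rank-$1$ tropical linear series on an interval, using the restriction property to reduce to the edges of $\Gamma$, and then to control how the edge-wise pictures are matched at the finitely many vertices.
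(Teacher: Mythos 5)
Your overall strategy---attach to each point $x$ a function $\varphi_x$ whose divisor contains $x$, show these generate $\Sigma$, and conclude by showing the assignment $x \mapsto \varphi_x$ breaks into finitely many linear pieces---is in the right spirit and overlaps with the paper's edge-by-edge analysis, but three load-bearing steps are either false as stated or left unproved. First, the set $\{\psi \in \Sigma : \ddiv \psi + D \geq x\}$ is \emph{not} a single translation class for every $x$: already on the interval with $D = 2x$ and $\Sigma = \langle \varphi_0, \varphi_1 \rangle$ as in Example~\ref{Ex:Interval} (with $w_0$ to the left of $w_1$), the point $w_0$ lies in the support of $x + w_0$ and of $q + w_0$ for every $q \in (x, w_0)$, all of which are divisors of pairwise non-proportional functions in $\Sigma$. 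Uniqueness holds only off a finite exceptional set $W$ (this is Proposition~\ref{Prop:FiniteVertexSet}), and proving even that requires ingredients your outline never produces: a finite subdivision on whose edges the slope vector is constant (Lemma~\ref{Lem:Subdivision}), functions $\varphi^E_0, \varphi^E_1$ realizing the extreme slopes along an \emph{entire} edge (Lemma~\ref{Lem:GeneratingSet}, which rests on property (3) of Definition~\ref{Def:TLS}, an axiom you never invoke), and an analysis of the forced tropical dependence among $\psi$, $\varphi^E_0$, $\varphi^E_1$. Moreover, a $\psi$ whose divisor is supported entirely inside $W$ is not reached by your construction at all; the paper disposes of exactly these via connectedness of $|\Sigma|$ (Lemma~\ref{Lem:Connect}) together with closedness of finitely generated submodules (Lemma~\ref{Lem:FGClosed}), and some such global argument is unavoidable.

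Second, your generation mechanism fails: it is not true that $\varphi_y - \psi$ attains its minimum at $y$. In the same example take $\psi = \varphi_1$ and $p \in (x, w_0)$ generic; then $\varphi_p$ agrees with $\varphi_1$ on $[x,p]$ and has strictly smaller slope thereafter, so $\varphi_p - \psi$ vanishes on $[x,p]$ and is strictly decreasing on $[p,y]$, attaining its minimum at the right endpoint rather than at $p$. The appeal to reducedness does not repair this: the unique divisor of $|\Sigma|$ through a generic point is in general not the reduced divisor of the class $[D]$, since reducedness is a property relative to all of $|R(D)|$ and the reduced representative need not lie in $|\Sigma|$. Third, you yourself identify the finiteness of the sweep $x \mapsto \varphi_x$ as the main obstacle and do not resolve it; this is precisely where the paper's proof does its work, replacing the sweep by the statement that for all $x$ in a fixed edge $E$ of the subdivision and outside $W$, one has $\varphi_x \in \langle \varphi^E_0, \varphi^E_1 \rangle$, so that two generators per edge suffice. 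Until substitutes for these three steps are supplied, the proposal does not constitute a proof.
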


\noindent This theorem provides further motivation for Definition~\ref{Def:TLS}.  Optimistically, one might hope that finite generation follows from Definition~\ref{Def:TLS} for higher rank tropical linear series as well.

\begin{question} \label{Q:FG-tropicalizations}
Are all tropical linear series finitely generated as tropical modules?  If not, what about strong tropical linear series?
\end{question}

We also apply Theorem~\ref{Thm:Rank1} to describe the geometry of the spaces of divisors parametrized by rank 1 tropical linear series.

\begin{theorem}
Let $\Sigma$ be a tropical linear series of rank 1.  Then $\Sigma$ is a strong tropical linear series and hence $\vert \Sigma \vert$ is compact of pure dimension 1.
\end{theorem}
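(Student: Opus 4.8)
The plan is to derive everything from Theorem~\ref{Thm:Rank1} and Lemma~\ref{Lem:FGClosed}, and then to manufacture the valuated matroid demanded by property~(5) directly out of conditions (1)--(3). First I would dispose of property~(4) and compactness: by Theorem~\ref{Thm:Rank1} the module $\Sigma$ is finitely generated, so Lemma~\ref{Lem:FGClosed} applies and shows that $|\Sigma|$ is a closed and definable subset of $\Sym^d(\Gamma)$; since $|\Sigma| \subseteq |D|$, this is exactly property~(4). As $\Gamma$ is compact, $|D| = \Sym^d(\Gamma)$ is compact, and hence the closed subset $|\Sigma|$ is compact. This reduces the theorem to verifying property~(5) together with the claim that $|\Sigma|$ has pure dimension~$1$.

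For property~(5) I must produce a valuated matroid $\mathcal{V}$ of rank~$2$ on the ground set $\Sigma$ whose valuated circuits give tropical dependences. The starting point is the observation that two functions $\varphi,\psi \in \Sigma$ are tropically dependent if and only if $\psi = \varphi + a$ for some $a \in \RR$, since a dependence between only two functions forces $\varphi + a = \psi + b$ at every point of $\Gamma$. Thus the parallel classes of the matroid are exactly the points of $|\Sigma|$, and it is enough to build a rank-$2$ valuated matroid on $|\Sigma|$. For any three pairwise non-parallel functions $\varphi_0,\varphi_1,\varphi_2 \in \Sigma$, condition~(2) supplies a dependence $\vartheta = \min\{\varphi_0 + a_0,\ \varphi_1 + a_1,\ \varphi_2 + a_2\}$, and I would set the corresponding valuated circuit to be $V(\varphi_i) = a_i$. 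Two things then need checking: (a) the tuple $(a_0,a_1,a_2)$ is unique up to a global additive constant, so $V$ is well defined; and (b) the resulting family of circuit valuations satisfies the valuated circuit axioms of a rank-$2$ valuated matroid, equivalently the three-term tree relations. For (b) I expect condition~(3) --- which provides, for each tangent direction and each slope level, a rank-$0$ subseries sitting inside $\Sigma$ --- to furnish precisely the compatibility needed to patch the circuit valuations coherently, after which $\mathcal{V}$ is reconstructed from its circuits; the valuated-circuit dependences are then dependences by construction, so property~(5) holds and $\Sigma$ is a strong tropical linear series.

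The main obstacle is the uniqueness statement (a), and this is where the rank-$1$ hypothesis must be used in an essential way. For each tangent vector $\zeta$, the slopes of functions in $\Sigma$ take only the two values $s_\zeta[0] < s_\zeta[1]$, which tightly constrains the way a tropical linear combination of three functions can achieve its minimum at least twice along $\zeta$ and so pins down the coefficients. Concretely, given two normalized dependences on $\{\varphi_0,\varphi_1,\varphi_2\}$ with distinct coefficient tuples, I would minimize one against a suitable translate of the other and track the bend loci to produce either a dependence involving only two of the three functions --- forcing an unwanted parallelism, contradicting the choice of a genuine $3$-element circuit --- or a point and tangent direction at which $\Sigma$ would acquire a third slope. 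Finite generation enters here by reducing the entire bend-and-slope analysis to the finitely many cells of the polyhedral complex $|\Sigma|$.

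Finally, pure dimension~$1$: condition~(2), that every set of $r+2 = 3$ functions is tropically dependent, forces the tropical rank of $\Sigma$ to be at most~$2$, whence $\dim |\Sigma| \le 1$. For the reverse inequality and purity I would use condition~(1): for every point $P \in \Gamma$ there is $\varphi \in \Sigma$ with $\ddiv(\varphi) + D \ge P$, so the divisors parametrized by $|\Sigma|$ move through every point of $\Gamma$; combined with the rank-$2$ valuated matroid structure from~(5), which makes $|\Sigma|$ locally look like a tropicalized pencil, this shows that every cell of $|\Sigma|$ is contained in a $1$-dimensional cell. Hence $|\Sigma|$ is pure of dimension~$1$, and together with the compactness established above this completes the proof.
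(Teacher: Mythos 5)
Your treatment of property (4), compactness, and the upper bound $\dim|\Sigma|\le 1$ is sound and matches the paper: finite generation (Theorem~\ref{Thm:Rank1}) plus Lemma~\ref{Lem:FGClosed} gives closedness and definability, and the decomposition of $\Sigma$ into submodules generated by at most two functions gives $\dim|\Sigma|\le 1$ (Corollary~\ref{Cor:UpperDim}). For purity, the paper's route is simpler than yours: $|\Sigma|$ is path-connected (Lemma~\ref{Lem:Connect}), hence has no isolated points, and a closed definable set of dimension at most $1$ with no isolated points and nonempty one-dimensional part is pure of dimension $1$; you do not need the matroid structure or condition (1) for this.

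The genuine gap is property (5). Producing the valuated matroid is the technical core of the theorem, and your proposal does not actually verify the valuated-circuit elimination axiom; it asserts that condition (3) will ``furnish precisely the compatibility needed to patch the circuit valuations coherently,'' which is not an argument and, moreover, points at the wrong hypothesis. In the paper (Lemma~\ref{Lem:Prop6}) the collection $\mathcal{V}$ is taken to be \emph{all} functions $V$ with $|\supp(V)|\le 3$ whose associated tropical linear combination is a dependence --- so the uniqueness statement you single out as the main obstacle is bypassed --- and the elimination axiom is then checked by an explicit multi-step construction: given $V_1,V_2$ agreeing at $\varphi_0$ with $V_1(\varphi_1)<V_2(\varphi_1)$, one builds auxiliary combinations $\vartheta$ and $\vartheta'$, locates points $x,y$ and functions $\varphi_2,\varphi_3$ where minima are attained, and assembles the eliminating circuit $V$ supported on $\{\varphi_1,\varphi_2,\varphi_3\}$. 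The key input is not condition (3) but condition (2) together with Lemma~\ref{Lem:Ind3}: if a tropically \emph{dependent} triple satisfies $\varphi_1(x)=\varphi_2(x)<\varphi_3(x)$ and $\varphi_1(y)=\varphi_3(y)\le\varphi_2(y)$, then $\min\{\varphi_1,\varphi_2,\varphi_3\}$ is already a dependence --- proved by perturbing coefficients to manufacture a certificate of independence (via Theorem~\ref{Thm:Ind}) and deriving a contradiction. None of this mechanism appears in your sketch, and the bend-locus argument you outline for uniqueness of coefficients would at best reprove a fact the paper does not need. Without an actual verification of the exchange axiom, the claim that $\Sigma$ is strong is unproven.
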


Our interest in Question~\ref{Q:FG} stems in part from the observation that a finitely generated submodule $\Sigma \subseteq R(D)$ has a minimal generating set, unique up to tropical scaling (see Lemma~\ref{Lem:UniqueGen}). Also, a finite generating set $\{ \varphi_0 , \ldots , \varphi_s \}$ determines a map to tropical projective space $\Phi \colon \Gamma \to \mathbb{TP}^s$, by
\[
\Phi (v) = [\varphi_0 (v) : \ldots : \varphi_s (v)] .
\]
If $\Sigma$ is a strong tropical linear series of rank $r$, then the image of $\Phi$ is contained in a tropical linear space of dimension $r$, corresponding to the valuated matroid on $\{ \varphi_0 , \ldots , \varphi_s \}$.  Thus, finite generation could give a useful connection to the canonical correspondence in algebraic geometry between linear series and rational maps to projective space.  Indeed, in Proposition~\ref{Prop:Harmonic}, we show that there exists a tropical modification $\widetilde{\Gamma}$ of $\Gamma$ and a balanced map $\widetilde{\Phi} : \widetilde{\Gamma} \to \mathcal{B}(M)$ extending $\Phi$.

In the case $r=1$, it follows that the only obstructions to lifting a tropical linear series of rank 1 to an algebraic linear series are the local Hurwitz obstructions (see \cite[Proposition~3.3]{ABBR2}).  Moreover, if $\Sigma$ is a tropical linear series of rank $1$ on $\Gamma$ and $K$ is an algebraically closed and  nontrivially valued field of residue characteristic zero, then there is a curve $X$ over $K$ whose skeleton has underlying metric graph $\Gamma$ (possibly with vertices of positive genus) and a linear series of rank $1$ on $X$ whose tropicalization is $\Sigma$ (see Theorem~\ref{Thm:Lifting}).  In Examples~\ref{Ex:Harmonic} and~\ref{Ex:LoopOfLoops}, we apply this to pathological examples of non-realizable rank 1 divisors appearing in the literature.  We invite the reader to compare this to Example~\ref{Ex:Matroid}, which shows that some tropical linear series of rank 2 cannot be the tropicalization of a linear series.  Indeed, a necessary condition for lifting a finitely generated strong tropical linear series is that the valuated matroid in property (5) can be chosen to be realizable.  Finding other such obstructions to lifting is an interesting open problem.

We close the paper with a section of open questions.  These include questions about finite generation, topological properties of tropical linear series, and the lifting problem.

\begin{remark}
When this paper was in the final stages of editing, we learned of a purely combinatorial notion of limit linear series on metric graphs developed simultaneously and independently by Omid Amini and Lucas Gierczak, with many similarities to the tropical linear series introduced and studied here \cite{AminiGierczak22}. There are also meaningful differences between the two approaches. For instance, Amini and Gierczak include additional combinatorial local data (rank functions on hypercubes) that encode the possibilities for the collections of slopes along tangent vectors at each vertex that are realized by functions in the given tropical submodule of $R(D)$; these are closely related to matroids and permutation arrays. Both approaches have the potential for significant applications to the study of linear series on algebraic curves, and both allow for multiple variations on the fundamental definitions. The precise relations among these definitions are not yet clear and will be explored in future work. 
\end{remark}

\medskip

{\textbf{Acknowledgments.}} Parts of this work arose from conversations at an AIM workshop in 2017.  We thank Max Kutler, Dhruv Ranganathan and Josephine Yu for helpful discussions at this workshop, as well as Omid Amini, Matt Baker, Lucas Gierczak, and Diane Maclagan for comments on an earlier draft of this paper.  The work of DJ is supported by NSF grant DMS-2054135.  The work of SP is supported by NSF grants DMS--2001502 and DMS--2053261.

\section{Divisors on Metric Graphs and Tropical Modules}
\label{Sec:Modules}

This section contains relevant background information on the divisor theory of metric graphs and tropical linear algebra.

\subsection{Divisor Theory of Metric Graphs}

A \emph{metric graph} is a compact, connected length metric space $\Gamma$ homeomorphic to a topological graph.  A finite set $V \subseteq \Gamma$  containing all points of valence different from 2 determines a graph, called a \emph{model} of $\Gamma$, with vertex set $V$ and edges corresponding to the connected components of $\Gamma \smallsetminus V$.  Note that a metric graph admits infinitely many different models.  A neighborhood of any point $v \in \Gamma$ is homeomorphic to a union of half-open line segments, glued together at their endpoints (which are identified with $v$). A \emph{tangent vector} based at $v \in \Gamma$ is a tangent vector based at $v$ in any of these line segments.

In analogy with algebraic curves, $\Div (\Gamma)$ is the free abelian group generated by points of $\Gamma$.  Elements of $\Div (\Gamma)$ are called \emph{divisors}.  In other words, a divisor on $\Gamma$ is a formal sum of points of $\Gamma$.  A divisor $D$ is called \emph{effective}, and we write $D \geq 0$, if all of its coefficients are nonnegative.  This induces a partial ordering on divisors, where $D \geq E$ if $D-E \geq 0$.

We define $\PL (\Gamma)$ to be the set of continuous, piecewise-linear functions $\varphi \colon \Gamma \to \mathbb{R}$ with integer slopes.  The set $\PL (\Gamma)$ is a semiring under the operations of pointwise minimum and pointwise addition.  Given a function $\varphi \in \PL (\Gamma)$ and a tangent vector $\zeta$ on $\Gamma$, we write $s_{\zeta} (\varphi)$ for the outgoing slope of $\varphi$ along $\zeta$.  For any point $v \in \Gamma$, the map $\ord_v \colon \PL (\Gamma) \to \mathbb{Z}$ is given by
\[
\ord_v (\varphi) := - \sum_{\zeta \in T_v (\Gamma)} s_{\zeta} (\varphi),
\]
where the sum is over all tangent vectors $\zeta$ based at $v$.  The map $\ddiv \colon \PL (\Gamma) \to \Div (\Gamma)$ is given by
\[
\ddiv (\varphi ) := \sum_{v \in \Gamma} \ord_v (\varphi) \cdot v .
\]
Note that, for $\varphi_1$ and $\varphi_2 \in \PL (\Gamma)$, we have $\ddiv (\varphi_1) = \ddiv (\varphi_2)$ if and only if the function $\varphi_1 - \varphi_2$ is constant on $\Gamma$.

\subsection{Tropical Modules}

Recall from the introduction that a \emph{tropical linear combination} of a collection of functions $\{ \varphi_0, \ldots, \varphi_r \} \subset \PL(\Gamma)$ is an expression of the form $\vartheta = \min \{ \varphi_i + a_i \}$, where $a_0, \ldots, a_r$ are real numbers.

\begin{definition}
A subset $\Sigma \subseteq \PL (\Gamma)$ is a \emph{tropical submodule} if it is closed under all tropical linear combinations.
\end{definition}

Note that tropical submodules are also called \emph{tropically convex sets} in \cite{DevelinSturmfels04, Luo18}.  For any divisor $D$ on $\Gamma$, the set
\[
R(D) := \{ \varphi \in \PL (\Gamma) \mbox{ } \vert \mbox{ } \ddiv (\varphi) + D \geq 0 \}
\]
is a tropical submodule of $\PL (\Gamma)$. 

\begin{definition}
Let $\Sigma \subseteq \PL (\Gamma)$ be a tropical submodule, and let $S \subseteq \Sigma$ be a subset.  The \emph{module} $\langle S \rangle$ \emph{generated by} $S$ is the set of all tropical linear combinations of elements of $S$.  If $\langle S \rangle = \Sigma$, we say that $S$  is a \emph{generating set} for $\Sigma$.  The tropical submodule $\Sigma$ is said to be \emph{finitely generated} if it has a finite generating set.
\end{definition}

For any divisor $D$, the tropical module $R(D)$ is finitely generated, and the minimal generating set is essentially unique up to tropical scaling \cite{HMY12}.  The same argument, which we include for completeness, gives the following lemma.

\begin{lemma}
\label{Lem:UniqueGen}
Suppose $\Sigma \subseteq \PL (\Gamma)$ is finitely generated and $\{ \varphi_0, \ldots, \varphi_r\}$ and $\{\psi_0, \ldots, \psi_s \}$ are minimal generating sets.  Then $r = s$ and there is a permutation $\sigma$ such that $\varphi_i - \psi_{\sigma(i)}$ is constant.
\end{lemma}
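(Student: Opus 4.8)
The plan is to follow the standard argument for uniqueness of minimal generating sets of tropically convex sets, as in \cite{DevelinSturmfels04, HMY12}, adapted to the present setting where the ``vectors'' are $\PL$ functions on the compact space $\Gamma$ and scalar multiplication is addition of a constant.

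First I would isolate the following \emph{irreducibility property}: if $\{\varphi_0, \ldots, \varphi_r\}$ is a minimal generating set and $\varphi_i = \min_k\{\psi_k + a_k\}$ expresses one generator as a tropical linear combination of elements $\psi_0, \ldots, \psi_m \in \Sigma$, then $\psi_k + a_k = \varphi_i$ identically for at least one index $k$. To see this, write each $\psi_k = \min_j\{\varphi_j + b_{kj}\}$ with $b_{kj} \in \RR$ (possible since the $\varphi_j$ generate $\Sigma$, extending by a large real constant if some $\varphi_j$ does not appear, which is harmless because $\Gamma$ is compact and the functions are bounded). Substituting and interchanging the two minima gives $\varphi_i = \min_j\{\varphi_j + c_j\}$ with $c_j = \min_k(a_k + b_{kj}) \in \RR$. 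Comparing with the $j = i$ term forces $c_i \geq 0$. If $c_i > 0$, then at every point of $\Gamma$ the term $\varphi_i + c_i$ strictly exceeds the value $\varphi_i$, so the minimum over $j \neq i$ already equals $\varphi_i$; hence $\{\varphi_j : j \neq i\}$ generates $\Sigma$, contradicting minimality. So $c_i = 0$. Choosing $k_0$ with $a_{k_0} + b_{k_0 i} = 0$, the $j = i$ term of $\psi_{k_0} + a_{k_0} = \min_j\{\varphi_j + a_{k_0} + b_{k_0 j}\}$ is exactly $\varphi_i$, so $\psi_{k_0} + a_{k_0} \leq \varphi_i$; on the other hand $\psi_{k_0} + a_{k_0}$ is one of the terms in $\min_k\{\psi_k + a_k\} = \varphi_i$, so $\psi_{k_0} + a_{k_0} \geq \varphi_i$. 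Therefore $\psi_{k_0} + a_{k_0} = \varphi_i$.

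Granting the irreducibility property, the lemma follows. Given minimal generating sets $\{\varphi_i\}_{i=0}^r$ and $\{\psi_j\}_{j=0}^s$, write each $\varphi_i$ as a tropical linear combination of the $\psi_j$; the property produces an index $\sigma(i)$ with $\varphi_i - \psi_{\sigma(i)}$ constant. The map $\sigma$ is injective: if $\sigma(i) = \sigma(i')$ with $i \neq i'$, then $\varphi_i - \varphi_{i'}$ is constant, so $\varphi_{i'}$ is a scalar translate of $\varphi_i$ and can be deleted from $\{\varphi_i\}$, contradicting minimality. Hence $r + 1 \leq s + 1$, and the symmetric argument gives $s + 1 \leq r + 1$; so $r = s$ and $\sigma$ is a permutation with the required property.

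I do not anticipate a serious obstacle. The only points needing a little care are the bookkeeping in the double-minimum interchange and the (cosmetic) use of compactness of $\Gamma$ to pad out generating combinations with large real constants. The real content is the irreducibility property, and that is precisely where minimality of the generating set enters in an essential way --- used once to exclude $c_i > 0$ and once to force injectivity of $\sigma$.
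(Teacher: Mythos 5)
Your proposal is correct and follows essentially the same argument as the paper: write each generator of one set as a tropical combination of the other, substitute, interchange the two minima, and use the sign of the coefficient of $\varphi_i$ in the resulting self-expression to either delete $\varphi_i$ (contradicting minimality) or extract a $\psi_{k_0}$ differing from $\varphi_i$ by a constant. The only difference is cosmetic: you phrase the dichotomy as $c_i>0$ versus $c_i=0$ and spell out the injectivity/counting step, while the paper argues contrapositively and leaves that step implicit.
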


\noindent In other words, there is a unique minimal generating set up to addition of scalars.

\begin{proof}
Suppose $\{ \varphi_0, \ldots, \varphi_r\}$ and $\{\psi_0, \ldots, \psi_s \}$ are generating sets and $\varphi_r - \psi_j$ is not a constant for any $j$. We will show that $\{ \varphi_0, \ldots, \varphi_r\}$ is not a minimal generating set.

 Since $\{ \psi_0, \ldots, \psi_s\}$ is a generating set, there are scalars $b_0 , \ldots b_s$ such that $\varphi_r = \min \{ \psi_j + b_j \}$.  Similarly, there are scalars $a_{ij}$ such that $\psi_j = \min \{\varphi_i + a_{ij} \}$.  Note that $\varphi_r \leq \psi_j + b_j \leq \varphi_r + a_{rj} + b_j$, for all $j$.  Thus $a_{rj} + b_j \geq 0$ and, if equality holds, then $\varphi_r - \psi_j$ is the constant function $b_j$.  Since we assumed $\varphi_r - \psi_j$ is not constant, it follows that $a_{rj} + b_j > 0$.  Putting everything together, we obtain
\[
\varphi_r = \min_j \{ \min_i \{  \varphi_i +a_{ij}\} + b_j \} = \min_{i=0}^r \{ \varphi_i  + \min_j \{a_{ij} + b_j \} \} = \min_{i=0}^{r-1} \{ \varphi_i + \min_j \{ a_{ij} + b_{j} \} \}.
\]
In particular, $\varphi_r$ can be written as a tropical linear combination of the functions $\varphi_0 , \ldots , \varphi_{r-1}$, so the generating set $\{ \varphi_0 , \ldots , \varphi_r \}$ is not minimal.
\end{proof}

The following lemma says that a finitely generated tropical module has property (2) in the definition of a tropical linear series (Definition~\ref{Def:TLS}) if and only if the set of generators has it.

\begin{lemma}
\label{Lem:GeneratorDependence}
Suppose every set of $r + 2$ functions in $S = \{ \varphi_0 , \ldots , \varphi_s \} \subseteq \PL (\Gamma )$ is tropically dependent.  Then every set of $r+2$ functions in $\langle S \rangle$ is tropically dependent.
\end{lemma}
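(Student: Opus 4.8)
The plan is to reduce a dependence among $r+2$ arbitrary elements of $\langle S\rangle$ to a dependence among $r+2$ suitably chosen elements of $S$, exploiting the fact that a tropical linear combination of tropical linear combinations is again a tropical linear combination. So let $\psi_0,\dots,\psi_{r+1} \in \langle S\rangle$, and write each $\psi_k = \min_i \{\varphi_i + a_{ik}\}$ for real numbers $a_{ik}$. Consider a candidate tropical linear combination $\vartheta = \min_k \{\psi_k + c_k\}$; expanding, $\vartheta = \min_{i,k}\{\varphi_i + a_{ik} + c_k\} = \min_i \{\varphi_i + b_i\}$ where $b_i = \min_k\{a_{ik} + c_k\}$. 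Thus $\vartheta$ is automatically a tropical linear combination of $S$. The issue is that $\vartheta$ being a dependence among $\{\psi_0,\dots,\psi_{r+1}\}$ is a priori a stronger condition than being a dependence among the (possibly larger) set of $\varphi_i$'s actually achieving the minimum.

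First I would set up the argument by contradiction: suppose $\{\psi_0,\dots,\psi_{r+1}\}$ is independent. By Theorem~\ref{Thm:Ind} there is a certificate of independence $\vartheta = \min_k\{\psi_k + c_k\}$, meaning each summand $\psi_k + c_k$ achieves the minimum uniquely at some point $v_k \in \Gamma$. The key step is then to show that this forces an independent subset of size $r+2$ inside $S$, contradicting the hypothesis. At the point $v_k$ we have $\psi_k(v_k) + c_k < \psi_\ell(v_k) + c_\ell$ for all $\ell \neq k$; choose an index $i(k)$ with $\varphi_{i(k)}(v_k) + a_{i(k),k} = \psi_k(v_k)$, so $\varphi_{i(k)}$ realizes $\psi_k$ at $v_k$. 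I claim the indices $i(0),\dots,i(r+1)$ can be taken distinct, and that $\{\varphi_{i(0)},\dots,\varphi_{i(r+1)}\}$ admits the certificate of independence $\min_k\{\varphi_{i(k)} + a_{i(k),k} + c_k\}$ — at $v_k$, the $k$-th summand equals $\psi_k(v_k)+c_k$, which is strictly below $\psi_\ell(v_k)+c_\ell \le \varphi_{i(\ell)}(v_k) + a_{i(\ell),\ell} + c_\ell$ for $\ell \neq k$. Hence that set of $r+2$ functions in $S$ is independent, contradicting the hypothesis.

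The main obstacle I anticipate is the distinctness of the indices $i(0),\dots,i(r+1)$: a priori two different $\psi_k, \psi_\ell$ could both be realized by the same $\varphi_i$. I would handle this by arguing that if $i(k) = i(\ell) = i$ for $k \neq \ell$, then at $v_k$ we have $\varphi_i(v_k) + a_{i,k} + c_k < \varphi_i(v_k) + a_{i,\ell} + c_\ell$ hence $a_{i,k} + c_k < a_{i,\ell} + c_\ell$, while evaluating at $v_\ell$ gives the reverse strict inequality — a contradiction. So the indices are automatically distinct, and the argument closes. A secondary point to check is the degenerate case $s + 1 < r+2$, i.e. when $\langle S\rangle$ simply does not contain $r+2$ elements in "general position"; but then any $r+2$ functions in $\langle S\rangle$ are among tropical combinations of at most $r+1$ generators, and one shows directly (or cites the same expansion argument with fewer indices) that they are dependent. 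Finally, I would note the statement quantifies over $r$ in a way compatible with Lemma~\ref{Lem:GeneratorDependence}'s intended use in verifying condition~(2) of Definition~\ref{Def:TLS}, so no further bookkeeping in $r$ is needed.
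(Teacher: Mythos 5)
Your proposal is correct and follows essentially the same route as the paper's proof: argue by contradiction, take a certificate of independence $\min_k\{\psi_k+c_k\}$ with witness points $v_k$, pick a generator $\varphi_{i(k)}$ realizing $\psi_k$ at $v_k$, show the indices are forced to be distinct by the two opposing strict inequalities, and conclude that $\min_k\{\varphi_{i(k)}+a_{i(k),k}+c_k\}$ is a certificate of independence for $r+2$ elements of $S$. The degenerate case you flag needs no separate treatment, since the distinctness of the $r+2$ indices already yields a contradiction when $S$ has fewer than $r+2$ elements.
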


\begin{proof}
Let $\psi_i = \min_j \{ a_{ij} + \varphi_j \} \in \langle S \rangle$, for $i = 0, \ldots , r+1$, and suppose for contradiction that the set $\{ \psi_0 , \ldots , \psi_{r+1} \}$ is independent.  By Theorem~\ref{Thm:Ind}, there is a certificate of independence $\vartheta = \min \{ b_i + \psi_i \}$.  In other words, for each $i$ there exists a point $x_i \in \Gamma$ such that $b_i + \psi_i$ is the unique function that obtains the minimum at $x_i$.  Let $\varphi_{\sigma (i)}$ be a function in $S$ that obtains the minimum at $x_i$ in the expression $\psi_i = \min_j \{ a_{ij} + \varphi_j \}$.  Because $b_i + \psi_i$ is the unique function that obtains the minimum at $x_i$, we see that $b_i + a_{i\sigma (i)} < b_j + a_{j\sigma (i)}$ for all $j \neq i$.  In other words, if $j \neq i$, then $\vartheta < b_j + a_{j\sigma (i)} + \varphi_{\sigma (i)}$.  It follows that $\sigma (i) \neq \sigma (j)$ for all $i \neq j$.  We therefore see that $\vartheta' = \min \{ b_i + a_{i \sigma (i)} + \varphi_{\sigma (i)} \}$ is a certificate of independence for a set of $r+2$ functions in $S$, a contradiction. 
\end{proof}

\noindent We do not know whether there is an analogue of Lemma~\ref{Lem:GeneratorDependence} for property (5), i.e., if there is a valuated matroid on a generating set for a tropical linear series $\Sigma$ such that every valuated circuit gives rise to a tropical dependence, does it follow that this extends to a valuated matroid on $\Sigma$ with this property?

In our definition of tropical linear series, it will be important to keep track of the possible slopes of functions along a fixed tangent vector, as the function varies within a tropical module.

\begin{definition}
Let $\Gamma$ be a metric graph, $D$ a divisor on $\Gamma$, $\zeta$ a tangent vector on $\Gamma$, and $\Sigma \subseteq R(D)$ a tropical submodule.  We write
\[
s_{\zeta} (\Sigma) := (s_{\zeta}[0], \ldots , s_{\zeta}[r])
\]
for the vector of slopes $s_{\zeta} (\varphi)$, for $\varphi \in \Sigma$, ordered so that $s_{\zeta}[0] < \cdots < s_{\zeta} [r]$.  
\end{definition}

Note that the vector $s_{\zeta} (\Sigma)$  has finite length.  This is because $R(D)$ is finitely generated, and for any tangent vector $\zeta$ and function $\varphi \in \Sigma$, the slope $s_{\zeta} (\varphi)$ must agree with the slope along $\zeta$ of a function in the generating set.

\begin{lemma}
\label{Lem:Slopes}
Let $\Sigma \subseteq R(D)$ be a tropical linear series of rank $r$.  For each tangent vector $\zeta$, there are exactly $r+1$ slopes $s_{\zeta} (\varphi)$, for $\varphi \in \Sigma$.
\end{lemma}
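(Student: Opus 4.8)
The plan is to bound $\#\{\,s_\zeta(\varphi)\mid\varphi\in\Sigma\,\}$ above and below by $r+1$ separately, proceeding by induction on $r$. When $r=0$ a tropical linear series is a principal submodule $\{\,\varphi+a\mid a\in\RR\,\}$, and since constants have slope $0$ every function in it has the same slope along $\zeta$; so there is exactly $1=r+1$ slope and the base case holds.

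For the upper bound I would first isolate an elementary fact: if $\varphi_0,\dots,\varphi_m\in\PL(\Gamma)$ have pairwise distinct slopes along a single tangent vector $\zeta$ based at $v$, then $\{\varphi_0,\dots,\varphi_m\}$ is independent. To see this, let $\gamma\colon[0,\ell]\to\Gamma$ be the unit-speed parametrization of an edge in the direction $\zeta$ with $\gamma(0)=v$, and choose $\epsilon>0$ so small that every $\varphi_i$ is linear on $\gamma([0,\epsilon))$. Given any tropical linear combination $\vartheta=\min\{\varphi_i+a_i\}$, let $S=\{\,i\mid\varphi_i(v)+a_i=\vartheta(v)\,\}\neq\emptyset$ and let $i^{*}\in S$ have the smallest slope $s_\zeta(\varphi_{i^{*}})$ among indices in $S$ (well defined, as these slopes are distinct). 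Then for small $t\in(0,\epsilon)$, $\varphi_{i^{*}}(\gamma(t))+a_{i^{*}}$ is strictly smaller than $\varphi_i(\gamma(t))+a_i$ for $i\in S\smallsetminus\{i^{*}\}$ (linearity there with strictly larger $\zeta$-slope) and for $i\notin S$ (continuity, since $\varphi_i(v)+a_i>\vartheta(v)$); so the minimum in $\vartheta$ is attained uniquely at $\gamma(t)$, and $\vartheta$ is not a dependence. As $\vartheta$ was arbitrary, the set is independent. Consequently, if $\Sigma$ had at least $r+2$ distinct slopes along $\zeta$, choosing one function of $\Sigma$ for each of $r+2$ of these values would produce an independent subset of $\Sigma$ of size $r+2$, contradicting property (2); hence $\Sigma$ has at most $r+1$ slopes along $\zeta$.

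For the lower bound, recall that $s_\zeta(\Sigma)$ is finite, so the ordering $s_\zeta[0]<\dots<s_\zeta[k]$ is meaningful, and we must rule out $k<r$. Property (3) with $i=r-1$ (which presupposes, for $r\ge 2$, that $s_\zeta[r-1]$ is defined, hence $k\ge r-1$) supplies a tropical linear series $\Sigma'\subseteq\{\,\varphi\in\Sigma\mid s_\zeta(\varphi)\le s_\zeta[r-1]\,\}$ of rank $r-1$. By the inductive hypothesis $\Sigma'$ has exactly $r$ slopes along $\zeta$; these lie in the $r$-element set $\{s_\zeta[0],\dots,s_\zeta[r-1]\}$, so they are precisely $s_\zeta[0]<\dots<s_\zeta[r-1]$, and $\Sigma$ has at least $r$ slopes. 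It remains to exhibit $\varphi\in\Sigma$ with $s_\zeta(\varphi)>s_\zeta[r-1]$. Here I would use property (1): fix $\delta>0$ with $\supp(D)\cap\gamma((0,\delta])=\emptyset$, choose distinct points $\gamma(\eta_1),\dots,\gamma(\eta_r)$ with $0<\eta_1<\dots<\eta_r<\delta$, and take $\varphi\in\Sigma$ with $\ddiv(\varphi)+D\ge\gamma(\eta_1)+\dots+\gamma(\eta_r)$. Since $D$ has no support in $\gamma((0,\delta])$, the function $\varphi\circ\gamma$ is concave on $[0,\delta]$ and its slope drops by at least $1$ at each parameter $\eta_j$; telescoping, $s_\zeta(\varphi)$ exceeds by at least $r$ the slope of $\varphi$ just past $\gamma(\eta_r)$, and the aim is to parlay this into $s_\zeta(\varphi)>s_\zeta[r-1]$, producing the required $(r+1)$-st slope.

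The step I expect to be the main obstacle is precisely this last one: the concavity estimate directly exhibits only $r+1$ distinct slopes of the single function $\varphi$ along $r+1$ \emph{different} tangent vectors on the initial segment (one at $v$, one just past each $\gamma(\eta_j)$), and to conclude that the slope of $\varphi$ at $\zeta$ itself exceeds $s_\zeta[r-1]$ one needs a lower bound on the slope of $\varphi$ just past $\gamma(\eta_r)$. I would try to get such a bound by taking all the $\eta_j$ very close to $v$ and comparing with the slopes already available along $\zeta$, using that on a sufficiently short initial segment every function of $R(D)$ is a pointwise minimum of affine functions whose slopes are those of a fixed finite generating set of $R(D)$, and, if necessary, by also applying property (1) to the rank $r-1$ subseries $\Sigma'$. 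The remaining cases, where $v$ has valence one or lies in $\supp(D)$, should be routine by comparison.
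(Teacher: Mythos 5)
Your upper bound is complete and is exactly the paper's: functions with pairwise distinct slopes along a single tangent vector are independent (your certificate -- the term of smallest $\zeta$-slope among those achieving the minimum at $v$ wins uniquely just past $v$ -- is correct), so property (2) caps the number of slopes at $r+1$.

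The lower bound, however, has a genuine gap, and in fact two related ones. First, your route through property (3) with $i=r-1$ is circular under the reading the paper intends: invoking that clause requires $s_\zeta[r-1]$ to be defined, i.e., that there are already at least $r$ distinct slopes along $\zeta$, which is (most of) what you are trying to prove; and once $s_\zeta[r-1]$ is known to exist, the conclusion you extract from $\Sigma'$ (namely that $s_\zeta[0],\dots,s_\zeta[r-1]$ all occur) is already true by definition, so the detour adds nothing. The paper's proof of this lemma uses only properties (1) and (2), precisely so that the existence of the $r+1$ slopes is a consequence rather than a hidden hypothesis. Second, and as you yourself flag, the final step is not carried out: the concavity argument exhibits $r+1$ distinct slopes of a single function $\varphi$ along $r+1$ \emph{different} tangent vectors in a small initial segment, and you do not supply the mechanism that converts these into $r+1$ distinct values of $s_\zeta(\cdot)$ over $\Sigma$. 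The paper's mechanism is to choose the half-open interval $I$ at $v$ small enough that (i) $I\smallsetminus\{v\}$ misses $\supp(D)$ and (ii) the slope set $s_\eta(\Sigma)$ equals $s_\zeta(\Sigma)$ for every tangent vector $\eta$ in $I$ oriented away from $v$; then all $r+1$ slopes of $\varphi$ on $I$ lie in $s_\zeta(\Sigma)$ and you are done. Your proposed repair (all slopes lie in the finite set of slopes of a generating set of $R(D)$, plus concavity) gives the easy inclusion $s_\zeta(\Sigma)\subseteq s_\eta(\Sigma)$ for $\eta$ near $\zeta$ (witnesses are linear near $v$), but not the inclusion $s_\eta(\Sigma)\subseteq s_\zeta(\Sigma)$ that the argument actually needs; that local constancy of the slope vector is the missing ingredient you should isolate and prove (or, as the paper does, explicitly assert as part of the choice of $I$).
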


\begin{proof}
Let $v \in \Gamma$, let $\zeta$ be a tangent vector based at $v$, and let $I \subseteq \Gamma$ be a half-open interval with one endpoint at $v$ that contains $\zeta$.  By choosing $I$ sufficiently small, we may assume that $I \smallsetminus \{ v \}$ does not intersect the support of $D$, and $s_{\eta} (\Sigma) = s_{\zeta} (\Sigma)$ for all tangent vectors $\eta$ in $I$ that are oriented away from $v$.  By property (1), if $E$ is the sum of $r$ distinct points of $I \smallsetminus \{ v \}$, then there exists $\varphi \in \Sigma$ such that $\ddiv (\varphi) + D \geq E$.  At each of the $r$ distinct points, the incoming slope of $\varphi$ must be greater than the outgoing slope, and thus the function $\varphi$ has at least $r+1$ distinct slopes on $I$.  It follows that there are at least $r+1$ slopes $s_{\zeta} (\varphi)$, for $\varphi \in \Sigma$.

On the other hand, any set of functions in $\Sigma$ with distinct slopes along $\zeta$ is tropically independent.  By property (2), therefore, there are at most $r+1$ slopes $s_{\zeta} (\varphi)$, for $\varphi \in \Sigma$.
\end{proof}

\subsection{Geometry of Tropical Modules} \label{sec:modules}

If $\Sigma \subseteq R(D)$ is a tropical submodule, we write
\[
\vert \Sigma \vert := \{ \ddiv (\varphi) + D \mbox{ } \vert \mbox{ } \varphi \in \Sigma \} .
\]
The set $\vert \Sigma \vert$ inherits the subspace topology from $\Sym^d \Gamma$, where $d = \deg (D)$. This is equivalent to the metric topology determined by $\| \ddiv (f) \|_{\infty} := \max f  - \min f$ \cite[Proposition~B.1]{Luo18}.

A choice of model for $\Gamma$, 
together with a choice of orientation of each edge, induces a polyhedral cell decomposition of $\vert D \vert := \vert R(D) \vert$ \cite{GathmannKerber08, HMY12}.  Given an open, oriented edge $e$ of length $\ell (e)$, identify $\Sym^k e$ with the open simplex $\{ \vec{x} \in \mathbb{R}^k \ \vert \ 0 < x_1 < \cdots < x_k < \ell (e) \}$.  The cells of $\vert D \vert$ are indexed by the following discrete data:
\begin{itemize}
\item  for each vertex $v$, an integer $d_v$;
\item  for each edge $e$, an integer $m_e$ and an ordered set of integers $d_e^1 , \ldots , d_e^{r_e}$.
\end{itemize}
A divisor $D' \in \vert D \vert$ belongs to the corresponding cell if:
\begin{itemize}
\item  $D'(v) = d_v$ for all vertices $v$,
\item  the restriction of $D'$ to $e$ is of the form $\sum_{i=1}^{r_e} d_i x_i$, for some $0 < x_1 < \cdots < x_{r_e} < \ell (e)$, and
\item  any $\varphi \in \PL (\Gamma)$ satisfying $\ddiv (\varphi) = D'-D$ has starting slope $m_e$ along $e$, for all edges $e$.
\end{itemize}

Let $G \subseteq \mathbb{R}$ be an additive subgroup that contains all of the edge lengths $\ell(e)$. A closed polyhedron in $\mathbb{R}^n$ is \emph{$G$-definable} if it is the intersection of finitely many halfspaces $H_i = \{ u \in \mathbb{R}^n : \langle u, v_i \rangle \geq a_i \}$ with $v_i \in \mathbb{Z}^n$ and $a_i \in G$.  For instance, the $\mathbb{Q}$-definable polyhedra are precisely the rational polyhedra. More generally, a $G$-definable subset of $\mathbb{R}^n$ is a finite Boolean combination of $G$-definable polyhedra.  The cell decomposition described above provides an expression for $\vert D \vert$ as a finite union of $G$-definable subsets of $\mathbb{R}^n$.  We say that a subset of $\vert D \vert$ is \emph{definable} if its intersection with each of these subsets is definable. 

We note some topological properties of tropical modules.

\begin{lemma}
\label{Lem:Connect}
If $\Sigma \subseteq \PL (\Gamma)$ is a tropical submodule, then $\vert \Sigma \vert$ is path-connected.
\end{lemma}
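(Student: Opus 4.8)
The plan is to exhibit, for any two elements $\varphi, \psi \in \Sigma$, an explicit path in $\vert \Sigma \vert$ from $\ddiv(\varphi) + D$ to $\ddiv(\psi) + D$ that lies entirely inside $\vert \Sigma \vert$. Since $\Sigma$ is a tropical submodule, it is closed under tropical linear combinations, so the natural candidate is the ``tropical segment'' joining $\varphi$ and $\psi$: for $a \in \mathbb{R}$, set $\vartheta_a := \min\{\varphi, \psi + a\}$. Each $\vartheta_a$ lies in $\Sigma$, so $\ddiv(\vartheta_a) + D \in \vert \Sigma \vert$. As $a \to +\infty$, the term $\psi + a$ is eventually irrelevant (on the compact graph $\Gamma$, once $a > \max\varphi - \min\psi$ we have $\vartheta_a = \varphi$), so $\vartheta_a = \varphi$ for $a$ large; similarly, for $a \to -\infty$ we have $\vartheta_a = \psi + a$, which has the same divisor as $\psi$. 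Thus the path $a \mapsto \ddiv(\vartheta_a) + D$ starts (for $a$ very negative) at $\ddiv(\psi) + D$ and ends (for $a$ very positive) at $\ddiv(\varphi) + D$, after rescaling the parameter interval to $[0,1]$.

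The key step is to check that this path is continuous into $\Sym^d\Gamma$, equivalently — by the remark following the statement of Lemma~\ref{Lem:Connect}, i.e. \cite[Proposition~B.1]{Luo18} — that $a \mapsto \vartheta_a$ is continuous in the sup-norm up to additive constants, $\|\ddiv(g)\|_\infty = \max g - \min g$. This is straightforward: for $a \leq a'$ we have $0 \le \vartheta_{a'} - \vartheta_a \le a' - a$ pointwise on $\Gamma$, since $\min\{\varphi, \psi+a'\} - \min\{\varphi,\psi+a\}$ is nonnegative and bounded above by $(\psi+a') - (\psi+a) = a'-a$. Hence $\|(\vartheta_{a'} - \vartheta_a) - c\|_\infty \le a'-a$ for a suitable constant $c$ (indeed $\|\vartheta_{a'}-\vartheta_a\|_\infty \le a'-a$ already), which shows $a \mapsto \ddiv(\vartheta_a)+D$ is Lipschitz, hence continuous. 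Composing the paths from $\psi$ to $\varphi$ for all pairs shows $\vert\Sigma\vert$ is path-connected.

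I do not expect any serious obstacle here; the only point requiring a small amount of care is the passage from functions to divisors, namely that two functions differing by a constant have the same image in $\vert \Sigma \vert$ and that the metric on $\vert D\vert$ is the one induced by $\|\cdot\|_\infty$ modulo constants, which is exactly the content cited from \cite{Luo18}. One could also phrase the whole argument more abstractly: the map $\Sigma \to \vert\Sigma\vert$, $\varphi \mapsto \ddiv(\varphi)+D$, is continuous and surjective, $\Sigma$ modulo constants is tropically convex hence (as a subset of a PL-function space modulo constants) path-connected via tropical segments, and path-connectedness is preserved by continuous surjections. Either way the proof is short.
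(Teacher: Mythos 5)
Your proof is correct and takes essentially the same approach as the paper: both connect $\varphi$ and $\psi$ by the tropical segment of pointwise minima $\min\{\varphi + s, \psi + t\}$ as the scalars vary, using closure of $\Sigma$ under tropical linear combinations. The only difference is cosmetic (the paper parametrizes by $t \in [0,M]$ via $\min\{t+\varphi_1,(M-t)+\varphi_2\}$ rather than by a single shift $a\in\mathbb{R}$), and your explicit Lipschitz estimate for continuity in the quotient metric is a detail the paper leaves implicit.
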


\begin{proof}
Let $\Sigma \subseteq \PL (\Gamma)$ be a tropical submodule, and let $\varphi_1 , \varphi_2 \in \Sigma$.  Choose $M$ sufficiently large so that $M + \varphi_1 (v) > \varphi_2 (v)$ and $\varphi_1 (v) < M + \varphi_2 (v)$ for all $v \in \Gamma$.  (Such an $M$ exists because $\Gamma$ is compact.)  For $0 \leq t \leq M$, consider the function
\[
\psi_t = \min \{ t + \varphi_1 , (M-t) + \varphi_2 \} .
\]
Note that $\psi_t \in \Sigma$ for all $t$, $\psi_0 = \varphi_1$, and $\psi_M = \varphi_2$.  Thus, there is a path from $\varphi_1$ to $\varphi_2$ in $\Sigma$.
\end{proof}

\begin{lemma}
\label{Lem:FGClosed}
If $\Sigma \subseteq R(D)$ is a finitely generated submodule, then $\vert \Sigma \vert$ is a closed, definable subset of $\vert D \vert$.
\end{lemma}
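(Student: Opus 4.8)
The plan is to exhibit $|\Sigma|$ as the image of a polytope under a map that is affine on each cell of a finite polyhedral subdivision of its domain. Fix a finite generating set $\{\varphi_0, \ldots, \varphi_s\}$ of $\Sigma$, so that every element of $\Sigma$ is $\vartheta_a := \min_i\{\varphi_i + a_i\}$ for some $a = (a_0, \ldots, a_s) \in \mathbb{R}^{s+1}$, and set $\Psi(a) := \ddiv(\vartheta_a) + D \in \Sym^d \Gamma$, so that $|\Sigma| = \Psi(\mathbb{R}^{s+1})$. Two elementary observations cut the domain down to a box: adding a constant to all the $a_i$ does not change $\vartheta_a$, and since each $\varphi_i - \varphi_j$ is bounded on the compact space $\Gamma$, if $a_i$ is much larger than $\min_j a_j$ then the term $\varphi_i + a_i$ never attains the minimum and may be truncated without changing $\vartheta_a$. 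Hence $|\Sigma| = \Psi(P)$ for a box $P = [0,C]^{s+1}$ with $C$ depending only on the quantities $\max \varphi_i - \min \varphi_j$.

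I would deduce closedness immediately from this. The map $a \mapsto \vartheta_a$ is $1$-Lipschitz from $(\mathbb{R}^{s+1}, \|\cdot\|_\infty)$ into $\PL(\Gamma)$ with the sup norm, and composing with $f \mapsto \ddiv(f) + D$, which is continuous for the metric topology on $|D|$ by the identification of topologies recalled above \cite[Proposition~B.1]{Luo18}, shows that $\Psi$ is continuous. Then $|\Sigma| = \Psi(P)$ is compact, hence closed in the metric space $|D|$.

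For definability I would first normalize the setup: refine the chosen model of $\Gamma$ so that $\supp(D)$ and all bend points of $\varphi_0, \ldots, \varphi_s$ become vertices, enlarge $G$ to contain the resulting edge lengths, and replace each $\varphi_i$ by $\varphi_i - \varphi_i(v_0)$ for a fixed base vertex $v_0$ (which only reparametrizes the $a_i$). After this, each $\varphi_i$ is affine on every edge with integer slope and $\varphi_i(v) \in G$ for every vertex $v$. On each edge $\vartheta_a$ is then the lower envelope of the $s+1$ affine functions $\varphi_i + a_i$, and, following the theory of types for tropical polytopes \cite{DevelinSturmfels04}, the parameter space acquires a finite polyhedral subdivision $\mathcal{T}$, with $G$-definable cells, such that the combinatorial type of $\vartheta_a$ --- the induced subdivision of $\Gamma$ together with the set of minimizing indices on each piece --- is constant on the relative interior of each cell. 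On a cell $\tau \in \mathcal{T}$ the integers $\ord_v(\vartheta_a)$ are constant and the bend points of $\vartheta_a$ on the edges are affine functions of $a$ with rational linear part and constant terms in $G$, so $\Psi$ is the restriction of an affine map on $\tau$, with image in the closure $\bar\sigma$ of a single cell $\sigma$ of $|D|$, and $\Psi(\tau)$ is the affine image of the polytope $\tau$. Fourier--Motzkin elimination, which preserves $G$-definability, then identifies $\Psi(\tau)$ as a $G$-definable polytope; its intersection with each cell of $|D|$ is $G$-definable, so each $\Psi(\tau)$ is a definable subset of $|D|$, and $|\Sigma| = \bigcup_{\tau \in \mathcal{T}} \Psi(\tau)$ is definable.

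The main obstacle is the bookkeeping in the last paragraph: constructing the type decomposition $\mathcal{T}$ precisely, checking that it is finite and polyhedral with the claimed $G$-definable cells after the model refinement, and verifying that the edge coordinates of $\Psi(a)$ really are affine in $a$ on each cell with the asserted rationality. Everything else is soft; the only external input is the identification of the subspace topology on $|D|$ with the sup-norm-modulo-constants topology, which is what makes $\Psi$ continuous and hence the compactness argument for closedness go through cleanly.
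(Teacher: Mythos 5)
Your proof is correct and follows essentially the same route as the paper: both exhibit $\vert \Sigma \vert$ as the image of the piecewise-affine map $a \mapsto D + \ddiv\bigl(\min_i\{\varphi_i + a_i\}\bigr)$ restricted to a compact cube, deducing closedness from continuity and compactness. The only difference is that where the paper obtains definability by citing quantifier elimination for divisible ordered abelian groups (so that images of piecewise-affine maps are automatically definable), you unpack that step by hand via a polyhedral type decomposition and Fourier--Motzkin elimination, which is the constructive form of the same fact.
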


\begin{proof}
Let $\{ \varphi_0 , \ldots , \varphi_r \}$ be a generating set for $\Sigma$.  The map $T \colon \mathbb{R}^{r+1} \to \vert \Sigma \vert$ given by 
\[
T (a_0 , \ldots , a_r) = D + \ddiv ( \min \{ a_0 + \varphi_0 , \ldots a_r + \varphi_r \} )
\]
is continuous, piecewise-affine, and surjects onto $\vert \Sigma \vert$.  Because the theory of divisible ordered abelian groups has elimination of quantifiers \cite[Corollary~3.1.17]{Marker02}, the image of a piecewise-affine map is definable, hence $\vert \Sigma \vert$ is definable.

Since $\Gamma$ is compact, there exists an $M$ sufficiently large so that $\varphi_i < \varphi_j + M$ for all $i,j$.  Let $C \subset \mathbb{R}^{r+1}$ denote the cube $[0,M]^{r+1}$.  The restriction $T \vert_C$ surjects onto $\vert \Sigma \vert$, so $\vert \Sigma \vert$ is the continuous image of a compact set, and is therefore closed.
\end{proof}

\begin{example}
\label{Ex:HalfOpenInterval}
The hypothesis that $\Sigma$ is finitely generated is necessary for Lemma~\ref{Lem:FGClosed}.  For example, let $\Gamma$ be the unit interval, let $v$ be its left endpoint, $w$ its right endpoint, and let
\[
\Sigma = \{ \varphi \in R(v) \mbox { } \vert \mbox{ } \varphi (w) - \varphi (v) < 1 \} .
\]
Then $\Sigma$ is a tropical submodule of $R(v)$, but $\vert \Sigma \vert = \Gamma \smallsetminus \{ w \}$ is not a closed subset of $\Gamma$.
\end{example}

\subsection{Tropicalization}

Let $K$ be an algebraically closed valued field with valuation ring $R$, and let $C$ be a curve over $K$.  A \emph{semistable model} of $C$ is a flat, proper subscheme $\mathcal{C}$ over $\Spec R$, with generic fiber $\mathcal{C}_K \cong C$, whose special fiber is reduced and has no singularities other than nodes.  A semistable model determines a metric graph $\Gamma$, called the \emph{metric dual graph}, defined as follows.  The vertices of $\Gamma$ correspond to irreducible components of the special fiber, and the edges correspond to the nodes of the special fiber.  Each node in the special fiber is \'{e}tale locally isomorphic to $xy=f$ for some $f$ in the maximal ideal in $R$. The length of an edge is then defined to be $\val(f)$; this is is well-defined, independent of all choices.

For each point $v \in \Gamma$, there is a corresponding valuation $\val_v$ on the function field $K(C)$.  If $v$ is a vertex, then $\val_v (f) = -\val (a)$, where $a \in K^*$ is a constant such that $af$ is regular and nonvanishing at the generic point of the corresponding component of the special fiber.  The points in the interior of edges correspond to monomial valuations in the coordinates $x$ and $y$ near a node \'{e}tale locally isomorphic to $xy=f$.  By identifying each point of $\Gamma$ with a valuation, we obtain a map from $\Gamma$ to the Berkovich analytification $C^{\an}$.  This map is a homeomorphism onto its image, and there is a natural retraction $\Trop \colon C^{\an} \to \Gamma$.  Restricting to $K$-points of $C$ and extending linearly, one obtains a \emph{tropicalization} map
\[
\Trop \colon \Div (C) \to \Div (\Gamma) .
\]
Similarly, there is a tropicalization map
\[
\trop \colon K(C)^* \to \PL (\Gamma)
\]
given by $\trop (f) (v) = \val_v (f)$.  The two tropicalization maps are compatible in the sense that $\ddiv (\trop (f)) = \Trop (\ddiv(f))$ for all $f \in K(C)^*$.

\subsection{Valuated Matroids}

There are many equivalent definitions of matroids.  We include here the definition in terms of circuits, since valuated circuits appear in Definition~\ref{Def:StrongTLS}.

\begin{definition}
A \emph{matroid} of rank $r$ is a pair $M = (E,\mathcal{C})$ consisting of a set $E$ and a nonempty collection $\mathcal{C}$ of subsets of $E$, called \emph{circuits}, satisfying the following properties:
\begin{enumerate}
\item  no proper subset of a circuit is a circuit,
\item $\max \{ \vert A \vert \colon A \subseteq E, A \not\supseteq C \mbox{ for all } C \in \mathcal{C} \} = r$, and
\item  if $C_1 , C_2$ are circuits, $i \in C_1 \cap C_2$, and $j \in C_1 \smallsetminus C_2$, then there is a circuit $C_3 \subseteq (C_1 \cup C_2) \smallsetminus \{ i \}$ with $j \in C_3$.
\end{enumerate}
\end{definition}

Note that our definition does not assume the ground set $E$ to be finite, but does require the rank to be finite.
A typical example of a matroid is where the set $E$ is a set of vectors in a finite-dimensional vector space, and the circuits are the minimal linearly dependent subsets of $E$.  Matroids of this type are called \emph{realizable}.  If the vector space is defined over a nonarchimedean field, then the matroid has the additional structure of a valuated matroid.

The \emph{support} of a function $V \colon E \to \mathbb{R} \cup \{ \infty \}$ is $\supp(V) := \{ e \in E : V_i(e) \neq \infty \}$.

\begin{definition}
A \emph{valuated matroid} of rank $r$ is a set $E$ together with a collection $\mathcal{V}$ of functions (valuated circuits) $V \colon E \to \mathbb{R} \cup \{ \infty \}$ satisfying: \begin{enumerate}
\item $(\infty, \ldots, \infty) \not \in \mathcal{V}$;
\item  If $V \in \mathcal{V}$ and $a \in \mathbb{R}$ then $V + (a, \ldots, a) \in \mathcal{V}$;
\item  if $V_1 , V_2 \in \mathcal{V}$ and $\supp(V_1) \neq \supp(V_2)$ then $\supp(V_1) \not \subset \supp(V_2)$;
\item $\max \{ \vert A \vert \colon A \subseteq E, A \not\supseteq \supp(V) \mbox{ for all } V \in \mathcal{V} \} = r$;
\item If $V_1, V_2 \in \mathcal{V}$ and $e, e' \in E$ with $V_1(e) = V_2(e) \neq \infty$ and $V_1(e') < V_2(e')$ then there exists $V \in \mathcal{V}$ such that $V(e) = \infty$, $V(e') = V_1(e')$, and $V \geq \min \{ V_1, V_2 \}$.
\end{enumerate}
\end{definition}

Given a valuated matroid $\mathcal{V}$ of rank $r +1$ on $\{1, \ldots, n\}$, the set
\[
\mathcal{B}(\mathcal{V}) := \{ x \in \mathbb{T}\mathbb{P}^{n-1} \mbox{ } \vert \mbox { } \min_{i} \{V(i) + x_i \} \text{ occurs at least twice for all valuated circuits } V \in \mathcal{V} \}
\]
is a tropical variety of dimension $r$ and degree 1 \cite[Theorem~4.4.5]{MaclaganSturmfels}.  A \emph{tropical linear space} is a tropical variety of this form.  Let us also recall that $\mathcal{C} = \{ \supp(V) : V \in \mathcal{V} \}$ is the set of circuits in an ordinary matroid, called the \emph{underlying matroid} of $\mathcal{V}$. The Bergman fan of  the underlying matroid of $\mathcal{V}$ is the recession fan of the tropical linear space $\mathcal{B}(\mathcal{V})$.

\section{Examples}

Before proving our main results, we present examples to illustrate the relations between complete linear series $R(D)$, tropicalizations of algebraic linear series, and our notion of tropical linear series.

\begin{example}
\label{Ex:Lollipop}
If $\Gamma$ is a tree, then $R(D)$ is a tropical linear series of rank $\deg (D)$ for any effective divisor $D$ on $\Gamma$.  Similarly, if $\Gamma$ is a circle, then $R(D)$ is a tropical linear series of rank $\deg(D)-1$ for any non-trivial effective divisor $D$ on $\Gamma$.  Beyond these cases, it is typical for a graph $\Gamma$ to have divisors $D$ such that $R(D)$ is not a tropical linear series.  For example, consider the ``lollipop'' graph, pictured in Figure~\ref{Fig:Lollipop}, consisting of a line segment and a circle meeting at a point $w$.  Let $D = mw$ for some integer $m > 0$.  If $\zeta$ is the leftward tangent vector based at $w$, then
\[
s_{\zeta} (R(D)) = (0,1,\ldots,m).
\]
In particular, there are $m+1$ slopes $s_{\zeta} (\varphi)$, as $\varphi$ ranges over $R(D)$.  On the other hand, if $\eta$ is the upward tangent vector based at the point $w$, then
\[
s_{\eta} (R(D)) = (0,1,\ldots,m-1).
\]
In particular, there are only $m$ slopes $s_{\eta} (\varphi)$, as $\varphi$ ranges over $R(D)$.  By Lemma~\ref{Lem:Slopes}, because the number of slopes does not remain constant as the tangent vector varies, $R(D)$ cannot be a tropical linear series.

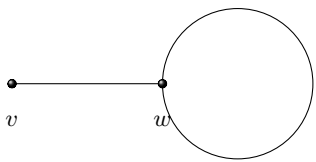
\begin{figure}[h]
\begin{tikzpicture}

\draw (0,4)--(2,4);
\draw (3,4) circle (1);
\draw [ball color=black] (0,4) circle (0.55mm);
\draw [ball color=black] (2,4) circle (0.55mm);
\draw (0,3.5) node {{\small $v$}};
\draw (2,3.5) node {{\small $w$}};

\end{tikzpicture}
\caption{The lollipop graph.}
\label{Fig:Lollipop}
\end{figure}

This example also illustrates that a finite subset of $R(D)$ is not necessarily a matroid under tropical dependence.  To see this, let $m=2$, and let $\varphi_0 \in R(D)$ be a constant function.  Let $\varphi_1 \in R(D)$ be a function with $s_{\zeta} (\varphi_1 ) = 1$, $\varphi_2 \in R(D)$ a function with $s_{\zeta} (\varphi_2 ) = 2$, and $\varphi_3 \in R(D)$ a function with $s_{\eta} (\varphi_3) = 1$.  Note that the restrictions of $\varphi_1$ and $\varphi_2$ to the circle are constant functions, and the restriction of $\varphi_3$ to the stem of the lollipop is a constant function.  The constant function $\varphi_0$ can be expressed as a tropical linear combination of either $\varphi_1$ and $\varphi_3$ or $\varphi_2$ and $\varphi_3$, and thus the sets $C_1 = \{ \varphi_0 , \varphi_1 , \varphi_3 \}, C_2 = \{ \varphi_0 , \varphi_2 , \varphi_3 \}$ are minimal tropically dependent sets.  However, the set $C_1 \cup C_2 \smallsetminus \{ \varphi_3 \} = \{ \varphi_0 , \varphi_1 , \varphi_2 \}$ is tropically independent, since the three functions have distinct slopes along the tangent vector $\zeta$.
\end{example}

\begin{example}
\label{Ex:Barbell}
Let $\Gamma$ be the barbell graph pictured in Figure~\ref{Fig:Barbell}, and consider the canonical divisor $K_{\Gamma} = v+w$.  As in Example~\ref{Ex:Lollipop}, the module $R(K_{\Gamma})$ is not a tropical linear series, because there are 3 possible slopes of functions in $R(K_{\Gamma})$ along the bridge between the loops, but only 2 possible slopes along tangent vectors in the loops.  We will show that $R(K_{\Gamma})$ contains a unique tropical linear series $\Sigma$ of rank 1.  Because the tropicalization of a linear series of degree $d$ and rank $r$ is a tropical linear series of the same degree and rank, $\Sigma$ must be the tropicalization of the canonical linear series on any curve of genus 2 with skeleton $\Gamma$.  Indeed, the characterization of $\Sigma$ that we deduce from Definition~\ref{Def:TLS} agrees with the realizability locus for canonical divisors determined in \cite[Example~6.4]{MUW21}.

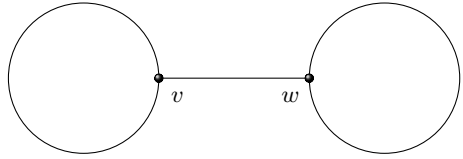
\begin{figure}[h]
\begin{tikzpicture}

\draw (0,4)--(2,4);
\draw (3,4) circle (1);
\draw (-1,4) circle (1);
\draw [ball color=black] (0,4) circle (0.55mm);
\draw [ball color=black] (2,4) circle (0.55mm);
\draw (0.25,3.75) node {{\small $v$}};
\draw (1.75,3.75) node {{\small $w$}};

\end{tikzpicture}
\caption{The barbell graph.}
\label{Fig:Barbell}
\end{figure}

Let $\Sigma \subseteq R(K_{\Gamma})$ be a tropical linear series of rank 1, let $\zeta$ be a rightward tangent vector based at a point on the bridge between the two loops, and let $x \in \Gamma$ be a point.  If $x \neq w$ is on the right loop, then up to tropical scaling there exists a unique function $\varphi \in R(K_{\Gamma})$ such that $\ddiv (\varphi) + K_{\Gamma} \geq x$.  The slopes of this function are pictured on the right of Figure~\ref{Fig:BarbellSlopes}, where each edge is oriented from left to right.  Since $\Sigma$ has rank 1, by the Baker-Norine rank condition, $\varphi \in \Sigma$.  Note that $s_{\zeta} (\varphi) = 1$.  Similarly, if $x \neq v$ is on the left loop, then up to tropical scaling there exists a unique function $\varphi \in R(K_{\Gamma})$ such that $\ddiv (\varphi) + K_{\Gamma} \geq x$.  This function, pictured on the left of Figure~\ref{Fig:BarbellSlopes}, must be in $\Sigma$, and $s_{\zeta} (\varphi) = -1$.  Together, this shows that $s_{\zeta} (\Sigma)$ must be $(-1,1)$ for all rightward tangent vectors $\zeta$ on the bridge.  Finally, if $x$ is on the bridge, then among the functions $\varphi \in R(K_{\Gamma})$ with $\ddiv (\varphi) + K_{\Gamma} \geq x$, there is a unique one satisfying $s_{\zeta} (\varphi) = \pm 1$ for all tangent vectors $\zeta$ on the bridge.  This function is pictured on the bottom in Figure~\ref{Fig:BarbellSlopes}.  By the Baker-Norine rank condition, all three types of functions must be contained in $\Sigma$, and by the slope sequence condition, no other functions in $R(K_{\Gamma})$ may be contained in $\Sigma$.

\begin{figure}[h!]
\begin{tikzpicture}

\draw (0,4)--(2,4);
\draw (3,4) circle (1);
\draw (-1,4) circle (1);
\draw [ball color=black] (-1,5) circle (0.55mm);
\draw [ball color=black] (-1,3) circle (0.55mm);
\draw (-1,5.25) node {{\small $x$}};
\draw (0.25,3.75) node {{\small $v$}};
\draw (1.75,3.75) node {{\small $w$}};
\draw (-2.25,4) node {{\small $0$}};
\draw (-0.5,4.5) node {{\small $-1$}};
\draw (-0.5,3.5) node {{\small $-1$}};
\draw (1,4.25) node {{\small $-1$}};
\draw (2.25,4) node {{\small $0$}};

\draw (7,4)--(9,4);
\draw (10,4) circle (1);
\draw (6,4) circle (1);
\draw [ball color=black] (10,5) circle (0.55mm);
\draw [ball color=black] (10,3) circle (0.55mm);
\draw (10,5.25) node {{\small $x$}};
\draw (7.25,3.75) node {{\small $v$}};
\draw (8.75,3.75) node {{\small $w$}};
\draw (11.25,4) node {{\small $0$}};
\draw (9.5,4.5) node {{\small $1$}};
\draw (9.5,3.5) node {{\small $1$}};
\draw (8,4.25) node {{\small $1$}};
\draw (6.75,4) node {{\small $0$}};

\draw (3.5,1)--(5.5,1);
\draw (2.5,1) circle (1);
\draw (6.5,1) circle (1);
\draw [ball color=black] (4.5,1) circle (0.55mm);
\draw (4.5,1.25) node {{\small $2$}};
\draw (4.5,0.75) node {{\small $x$}};
\draw (3.75,0.75) node {{\small $v$}};
\draw (5.25,0.75) node {{\small $w$}};
\draw (3.25,1) node {{\small $0$}};
\draw (4,1.25) node {{\small $1$}};
\draw (5,1.25) node {{\small $-1$}};
\draw (5.75,1) node {{\small $0$}};

\end{tikzpicture}
\caption{The 3 types of functions in $\Sigma$.}
\label{Fig:BarbellSlopes}
\end{figure}
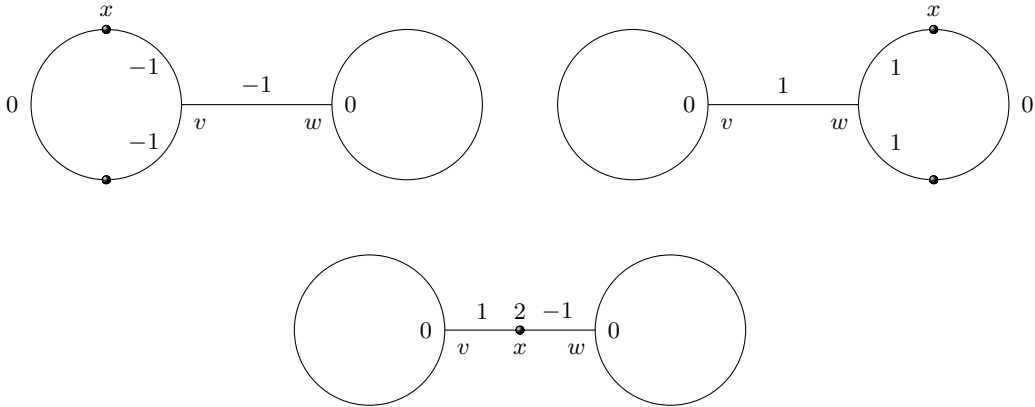
\end{example}

\begin{example}
\label{Ex:Interval}
Let $\Gamma$ be an interval with left endpoint $x$ and right endpoint $y$, and let $D$ be a divisor of degree 2 on $\Gamma$.  Rank 1 tropical linear series $\Sigma \subseteq R(D)$ are classified in \cite[Example~6.8]{M23}.  We reproduce this classification here.  Note that all divisors of degree 2 on $\Gamma$ are linearly equivalent; so there exists $\varphi \in \PL (\Gamma)$ such that $\ddiv (\varphi) = D-2x$, and the map from $R(D)$ to $R(2x)$ given by $\psi \mapsto \psi + \varphi$ is an isomorphism of tropical modules.  Thus we may assume $D=2x$.

Let $\varphi_0 \in \Sigma$ satisfy $s_x (\varphi_0) = s_x [0]$, and let $\varphi_1 \in \Sigma$ satisfy $s_y (\varphi_1) = s_y [1]$, where both tangent vectors point to the right.  Let $w_0 = \ddiv(\varphi_0) + D - x$ and $w_1 = \ddiv (\varphi_1) + D-y$.  If $w_0$ is to the left of $w_1$ or $w_0 = w_1$, we show that $\Sigma = \langle \varphi_0 , \varphi_1 \rangle$.  Indeed, if $\psi \in \Sigma$, then there is a tropical dependence $\vartheta = \min \{ a_0 + \varphi_0 , a_1 + \varphi_1 , a_2 + \psi \}$.  By assumption, however, the slope of $\varphi_1$ is strictly greater than that of $\varphi_0$ at every point, so there is at most one point where $a_0 + \varphi_0$ and $a_1 + \varphi_1$ agree.  In order for the minimum to be achieved at least twice at every point, it follows that $a_2 + \psi = \min \{ a_0 + \varphi_0 , a_1 + \varphi_1 \}$, hence $\psi \in  \langle \varphi_0 , \varphi_1 \rangle$.

On the other hand, suppose $w_0$ is to the right of $w_1$.  Then there is a distinguished point $z$ in this region such that the tangent vector $s_{\zeta} (\Sigma)$ is equal to $(1,2)$ for all tangent vectors $\zeta$ to the left of $z$, and equal to $(0,1)$ for all tangent vectors $\zeta$ to the right of $z$.  In fact, $z$ must be the midpoint between $w_1$ and $w_0$ \cite[Example~6.8]{M23}.  By taking a tropical linear combination of functions that agree at $z$, one with incoming slope 2 and one with outgoing slope 0, we obtain a function $\varphi_2 \in \Sigma$ with slope 2 everywhere to the left of $z$ and slope 0 everywhere to the right of $z$.

We claim that $\Sigma = \langle \varphi_0 , \varphi_1 , \varphi_2 \rangle$.  Let $\psi \in \Sigma$.  If $\ddiv (\psi) +2x = w_0 + w_1$, then up to scaling $\psi = \min \{  a_0 + \varphi_0 , a_1 + \varphi_1 \}$, where the coefficients are chosen so that the two functions agree at $w_1$.  Otherwise, the support of $\ddiv (\psi) +2x$ contains a point $w \neq w_0 , w_1$.  If $w$ is to the left of $z$, then consider the tropically dependent set $\{ \varphi_0 , \varphi_2 , \psi \}$.  Since $\psi$ is nonlinear at $w$ and the other two functions have distinct slopes, we see that all three functions must obtain the minimum at $w$.  Moreover, since $w \neq w_1$, there is at most one other point where the functions $\varphi_0$ and $\varphi_2$ agree.  It follows that $\psi \in \langle \varphi_0 , \varphi_2 \rangle$.  By a similar argument, if $w$ is to the right of $z$, then $\psi \in \langle \varphi_1 , \varphi_2 \rangle$. 
\end{example}

Our next example shows that tropical submodules of $R(D)$ satisfying properties (1), (3), and (4) from Definitions~\ref{Def:TLS} and~\ref{Def:StrongTLS} can fail to be finitely generated, even for $r = 1$. This highlights the essential importance of tropical dependence and independence in the theory of tropical linear series.

\begin{example}
\label{Ex:FG}
Let $\Gamma$ be a closed interval of length 2, let $v$ be the midpoint of the interval, and let $D=2v$.  Consider the tropical submodule $\Sigma \subseteq R(D)$ consisting of functions $\varphi_{xy}$ as pictured in Figure~\ref{Fig:InfGen}, with the condition that $x+y \leq 1$.  Note that $\Sigma$ satisfies every condition of Definition~\ref{Def:TLS} for a tropical linear series of rank 1 except for condition (2).  To be precise, for every point $x \in \Gamma$, there is a function $\varphi \in \Sigma$ such that $\ddiv (\varphi) + 2v \geq x$.  Note also that the set $\vert \Sigma \vert$ is a definable closed subset of $\Sym^2 \Gamma$.  Moreover, there are exactly 2 possible slopes at every tangent vector.  However, the functions $\varphi_{10}, \varphi_{01},$ and $\varphi_{\frac{1}{2} \frac{1}{2}}$ are tropically independent.  

Indeed, following the classification of rank 1 tropical linear subseries of $R(D)$ from Example~\ref{Ex:Interval}, we see that the unique function $\varphi \in \Sigma$ such that the support of $\ddiv (\varphi) + D$ contains the left endpoint is $\varphi_{10}$.  Similarly, the unique function $\varphi \in \Sigma$ such that the support of $\ddiv (\varphi) + D$ contains the right endpoint is  $\varphi_{01}$.  In Example~\ref{Ex:Interval}, if a rank 1 tropical linear series contains $\varphi_{10}$ and $\varphi_{01}$, then $w_0 = w_1 = v$, and the tropical linear series is generated by $\varphi_{10}$ and $\varphi_{01}$.  The tropical submodule $\Sigma$, however, is not generated by these two functions, so it cannot be a tropical linear series.

We now show that $\Sigma$ is not finitely generated.  Consider the tropical linear combination
\[
\varphi_{xy} = \min \{ a_{x'y'} + \varphi_{x'y'} \} .
\]
If $a_{x'y'} + \varphi_{x'y'}$ is a function that obtains the minimum at the point $v$, then we have $x \leq x'$ and $y \leq y'$.  It follows that the functions $\varphi_{xy}$ with $x+y=1$ cannot be written as tropical linear combinations of other functions in this submodule.  Hence, this submodule is not finitely generated.

\begin{figure}[h!]
\begin{tikzpicture}

\draw (0,4)--(2,4);
\draw (2,4)--(4,2);
\draw (4,2) -- (5,3);
\draw (5,3) -- (8,3);

\draw (0,0) -- (8,0);
\draw [ball color=black] (4,0) circle (0.55mm);
\draw (4,0.5) node {{\tiny $v$}};
\draw[<->] (2,-0.5)--(4,-0.5);
\draw[<->] (4,-0.5)--(5,-0.5);
\draw (3,-0.75) node {{\tiny $x$}};
\draw (4.5,-0.75) node {{\tiny $y$}};

\end{tikzpicture}
\caption{The graph of a function $\varphi_{xy}$ in $R(2v)$.}
\label{Fig:InfGen}
\end{figure}
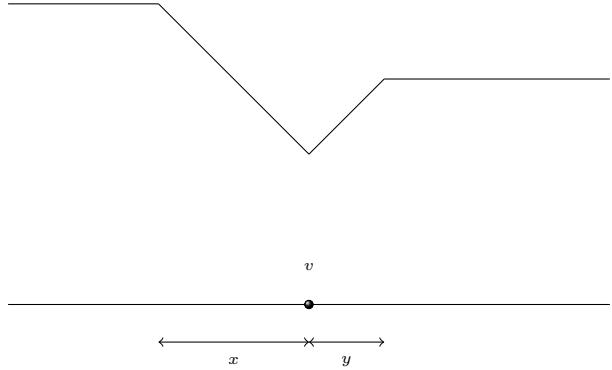
\end{example}

\begin{example}
\label{Ex:Harmonic}
One of the central problems in the theory of divisors on metric graphs is that of realizability:  given a divisor of rank $r$ on a metric graph, is it the tropicalization of a divisor of rank $r$ on an algebraic curve?  It is well known that the answer is sometimes negative.   One such example appears in \cite[Example~5.13]{ABBR2}, attributed to Ye Luo.  Consider the divisor $D = p+q+s$ on the graph $\Gamma$ pictured in Figure~\ref{Fig:NonRealize}, where all edge lengths are equal.  This divisor has rank 1, but there does not exist a degree 3 harmonic morphism from $\Gamma$ to a tree.  From this it follows that $D$ cannot be the tropicalization of a divisor of positive rank on an algebraic curve.

\begin{figure}[h]
\begin{tikzpicture}

\draw (0,0) circle (1);
\draw (0,1)--(0,2);
\draw (0,1)to[out=45,in=315](0,2);
\draw (0,1)to[out=135,in=225](0,2);
\draw (0.87,-0.5)--(1.74,-1);
\draw (0.87,-0.5)to[out=0,in=90](1.74,-1);
\draw (0.87,-0.5)to[out=270,in=180](1.74,-1);
\draw (-0.87,-0.5)--(-1.74,-1);
\draw (-0.87,-0.5)to[out=180,in=90](-1.74,-1);
\draw (-0.87,-0.5)to[out=270,in=0](-1.74,-1);
\draw [ball color=black] (0,1) circle (0.55mm);
\draw [ball color=black] (0.87,-0.5) circle (0.55mm);
\draw [ball color=black] (-0.87,-0.5) circle (0.55mm);
\draw [ball color=black] (0,2) circle (0.55mm);
\draw [ball color=black] (1.74,-1) circle (0.55mm);
\draw [ball color=black] (-1.74,-1) circle (0.55mm);
\draw (0,0.75) node {{\small $p$}};
\draw (0.62,-0.375) node {{\small $q$}};
\draw (-0.62,-0.375) node {{\small $s$}};
\draw (0,2.25) node {{\small $x$}};
\draw (2,-1.25) node {{\small $y$}};
\draw (-2,-1.25) node {{\small $z$}};

\end{tikzpicture}
\caption{Luo's example of a non-realizable divisor of positive rank.}
\label{Fig:NonRealize}
\end{figure}
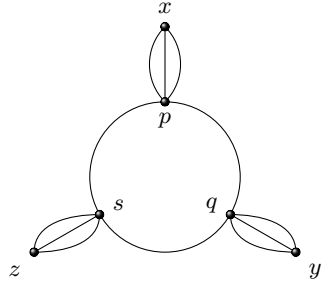

Here, we show that there is no tropical linear series $\Sigma \subseteq R(D)$ of rank 1.  This provides an alternative obstruction to lifting $D$ to a divisor of positive rank on an algebraic curve.  Indeed, these two obstructions are related; we will explore this in greater detail in Proposition~\ref{Prop:Harmonic}.

Let $\zeta$ be a rightward tangent vector on the edge between $s$ and $q$.  Up to tropical scaling there exists a unique function $\varphi_x \in R(D)$ such that $\ddiv (\varphi_x) + D \geq x$.  Note that $s_{\zeta} (\varphi_x) = 0$.  Similarly, up to tropical scaling there exists a unique function $\varphi_y \in R(D)$ such that $\ddiv (\varphi) + D \geq y$  and a unique function $\varphi_z \in R(D)$ such that $\ddiv (\varphi_z) + D \geq z$.  We have $s_{\zeta} (\varphi_y) = 1$ and $s_{\zeta} (\varphi_z) = -1$.  By the Baker-Norine rank condition, a rank 1 tropical linear series $\Sigma \subseteq R(D)$ would have to contain all 3 functions $\varphi_x , \varphi_y ,$ and $\varphi_z$.  However, because $s_{\zeta} (\varphi_x), s_{\zeta} (\varphi_y),$ and $s_{\zeta} (\varphi_z)$ are distinct, the 3 functions $\varphi_x , \varphi_y ,$ and $\varphi_z$ are independent, and thus cannot be contained in a rank 1 tropical linear series.  Thus, there is no tropical linear series $\Sigma \subseteq R(D)$ of rank 1. 
\end{example}

\begin{example}
\label{Ex:LoopOfLoops}
Another interesting example is the genus 4 loop of loops $\Gamma$, pictured in Figure~\ref{Fig:LOL}.  If $\ell_1 > \ell_2 > \ell_3$ and $\ell_2 + \ell_3 > \ell_1$, then $\Gamma$ has infinitely many divisor classes of degree 3 and rank 1, and no divisors of degree 2 and rank 1 \cite[Theorem~1.2]{LPP12}. 
 This contrasts with a well-known result for algebraic curves; by Martens's Theorem, a non-hyperelliptic curve of genus at least 4 has only finitely many divisors of degree 3 and rank 1 \cite[Theorem~1]{Martens67}.  We show that there is a unique divisor class $[D]$ on $\Gamma$ of rank 1 and degree 3 with the property that $R(D)$ contains a rank 1 tropical linear series.  It follows that $[D]$ is the only divisor class on $\Gamma$ that could be the tropicalization of a divisor class of degree 3 and rank 1 on an algebraic curve.  

\begin{figure}[h]
\begin{tikzpicture}

\draw (0,0) circle (0.5);
\draw (2,0) circle (0.5);
\draw (1,1.73) circle (0.5);
\draw (0.5,0)--(1.5,0);
\draw (0.25,0.43)--(0.75,1.3);
\draw (1.75,0.43)--(1.25,1.3);
\draw [ball color=black] (-0.5,0) circle (0.55mm);
\draw [ball color=black] (2.5,0) circle (0.55mm);
\draw [ball color=black] (1,2.23) circle (0.55mm);
\draw [ball color=black] (0.25,0.43) circle (0.55mm);
\draw [ball color=black] (0.5,0) circle (0.55mm);
\draw [ball color=black] (1.25,1.3) circle (0.55mm);
\draw (0.25,0.7) node {{\small $v_1$}};
\draw (0.6,-0.3) node {{\small $w_3$}};
\draw (1.5,1.1) node {{\small $v_2$}};

\draw (0.7,0.85) node {{\small $\ell_1$}};
\draw (1.3,0.85) node {{\small $\ell_2$}};
\draw (1,0.2) node {{\small $\ell_3$}};

\draw (-0.8,0) node {{\small $u_2$}};
\draw (2.8,0) node {{\small $u_1$}};
\draw (1,2.43) node {{\small $u_3$}};

\end{tikzpicture}
\caption{The loop of loops of genus 4}
\label{Fig:LOL}
\end{figure}
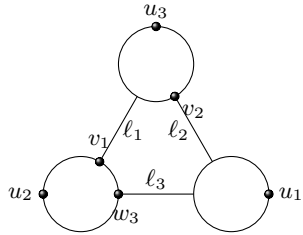

As in the proof of \cite[Theorem~1.2]{LPP12}, let $D$ be a divisor of the form $v_1 + w_3 + w$, where $w$ is a point on the edge of length $\ell_2$.  We let $x$ be the distance from $w$ to $v_2$, and assume that $x \geq \ell_1 - \ell_2$.  Let $\Sigma \subseteq R(D)$ be a tropical linear series of rank 1.  We will use the existence of $\Sigma$ to solve for $x$, thus proving that $D$ is unique.  Up to tropical scaling, for $i=1,2,3$ there exists a unique function $\varphi_i \in R(D)$ such that $\ddiv (\varphi_i) + D \geq u_i$.  The functions $\varphi_i$ are pictured in Figure~\ref{Fig:FunctionsOnLOL}.  By the Baker-Norine rank condition, $\varphi_i \in \Sigma$ for all $i$.  Thus, the set $\{ \varphi_1 , \varphi_2 , \varphi_3 \}$ must be tropically dependent.  Now, consider the point on the edge of length $\ell_3$ in the support of $\ddiv (\varphi_3) + D$, which has distance $\ell_1 - x$ from $w_3$ .  No two of the functions $\varphi_i$ agree in a neighborhood of this point, so in the tropical dependence, all three functions must obtain the minimum at this point.  Similarly, if we consider the point on the edge of length $\ell_1$ in the support of $\ddiv (\varphi_1) + D$, it has distance $\ell_3 - \ell_2 + x$ from $v_1$, and we see that all three functions must obtain the minimum in a neighborhood of this point.  The tropical dependence is pictured in Figure~\ref{Fig:LOLDependence}, with the loops labeled by the functions that obtain the minimum on them.  Since the function $\varphi_2$ takes the same value at these two points, it follows that $\ell_1 - x = \ell_3 - \ell_2 +x$.  Equivalently, $x = \frac{1}{2}(\ell_1 + \ell_2 - \ell_3)$.

\begin{figure}[h]
\begin{tikzpicture}

\draw (0,0) circle (0.5);
\draw (2,0) circle (0.5);
\draw (1,1.73) circle (0.5);
\draw (0.5,0)--(1.5,0);
\draw (0.25,0.43)--(0.75,1.3);
\draw (1.75,0.43)--(1.25,1.3);
\draw [ball color=white] (0.5,0) circle (0.55mm);
\draw [ball color=white] (0.25,0.43) circle (0.55mm);
\draw [ball color=white] (1.5,0.9) circle (0.55mm);
\draw [ball color=black] (2.5,0) circle (0.55mm);
\draw [ball color=black] (0.5,0.9) circle (0.55mm);
\draw [ball color=black] (2.25,-0.43) circle (0.55mm);

\draw (1,-0.3) node {{\small $1$}};
\draw (0.3,0.85) node {{\small $1$}};
\draw (1.7,0.85) node {{\small $1$}};
\draw (2,0) node {{\small $1$}};
\draw (1,-0.7) node {{\small $\varphi_1$}};

\draw (4,0) circle (0.5);
\draw (6,0) circle (0.5);
\draw (5,1.73) circle (0.5);
\draw (4.5,0)--(5.5,0);
\draw (4.25,0.43)--(4.75,1.3);
\draw (5.75,0.43)--(5.25,1.3);
\draw [ball color=white] (4.5,0) circle (0.55mm);
\draw [ball color=white] (4.25,0.43) circle (0.55mm);
\draw [ball color=black] (5.5,0.9) circle (0.55mm);
\draw [ball color=black] (3.5,0) circle (0.55mm);
\draw [ball color=black] (3.75,-0.43) circle (0.55mm);

\draw (4,0) node {{\small $1$}};
\draw (5,-0.7) node {{\small $\varphi_2$}};

\draw (8,0) circle (0.5);
\draw (10,0) circle (0.5);
\draw (9,1.73) circle (0.5);
\draw (8.5,0)--(9.5,0);
\draw (8.25,0.43)--(8.75,1.3);
\draw (9.75,0.43)--(9.25,1.3);
\draw [ball color=white] (8.5,0) circle (0.55mm);
\draw [ball color=white] (8.25,0.43) circle (0.55mm);
\draw [ball color=white] (9.5,0.9) circle (0.55mm);
\draw [ball color=black] (9,0) circle (0.55mm);
\draw [ball color=black] (8.9,2.23) circle (0.55mm);
\draw [ball color=black] (9.1,2.23) circle (0.55mm);

\draw (8.75,-0.3) node {{\small $1$}};
\draw (8.3,1) node {{\small $1$}};
\draw (9.6,1.2) node {{\small $1$}};
\draw (9,1.73) node {{\small $1$}};
\draw (9,-0.7) node {{\small $\varphi_3$}};

\end{tikzpicture}
\caption{The functions $\varphi_1 , \varphi_2,$ and $\varphi_3$ of Example~\ref{Ex:LoopOfLoops}.}
\label{Fig:FunctionsOnLOL}
\end{figure}
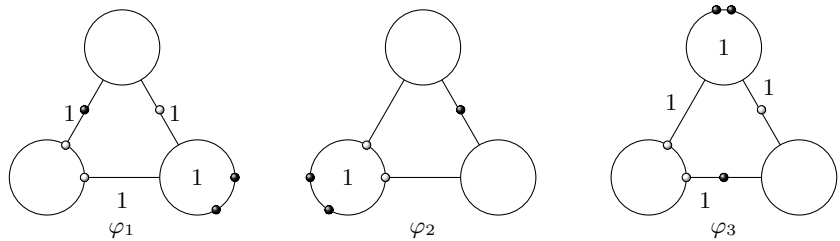

\begin{figure}[h]
\begin{tikzpicture}

\draw (0,0) circle (0.5);
\draw (2,0) circle (0.5);
\draw (1,1.73) circle (0.5);
\draw (0.5,0)--(1.5,0);
\draw (0.25,0.43)--(0.75,1.3);
\draw (1.75,0.43)--(1.25,1.3);
\draw [ball color=white] (0.5,0) circle (0.55mm);
\draw [ball color=white] (0.25,0.43) circle (0.55mm);
\draw [ball color=black] (1.5,0.9) circle (0.55mm);
\draw [ball color=black] (0.5,0.9) circle (0.55mm);
\draw [ball color=black] (1,0) circle (0.55mm);

\draw (0.75,-0.3) node {{\small $1$}};
\draw (0.3,0.85) node {{\small $1$}};

\draw (0,0) node {{\small $\varphi_1 , \varphi_3$}};
\draw (2,0) node {{\small $\varphi_2 , \varphi_3$}};
\draw (1,1.73) node {{\small $\varphi_1 , \varphi_2$}};

\end{tikzpicture}
\caption{The dependence between $\varphi_1, \varphi_2,$ and $\varphi_3$.}
\label{Fig:LOLDependence}
\end{figure}
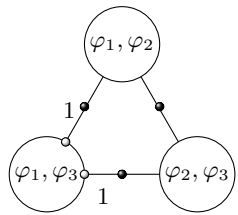

\end{example}

\begin{example}
\label{Ex:Matroid}
We now construct a family of examples of rank 2 tropical linear series.  Among them are many examples of tropical linear series that cannot be the tropicalization of a linear series on an algebraic curve.  Our treatment is inspired by \cite{Cartwright15}, and follows closely the constructions therein.

Let $M = (E,\mathcal{C})$ be a simple, rank 3 matroid with $E$ finite.  The size of every circuit in such a matroid is either 3 or 4.  A rank 2 \emph{flat} of such a matroid is a set $f \subseteq E$ with $\vert f \vert \geq 2$, such that every circuit $C$ satisfying $\vert C \vert = 3$ and $\vert C \cap f \vert \geq 2$ is contained in $f$.  The \emph{Levi graph} $\Gamma_M$ of $M$ is a bipartite graph with vertex set $V(\Gamma_M) = E \coprod F$, where $E$ is the set of elements of $M$ and $F$ is the set of rank 2 flats, and where $e \in E$ is adjacent to $f \in F$ if $e \in f$.  For simplicity, we assume all the edges of $\Gamma_M$ have length 1.  The divisor $D_M = \sum_{e \in E} e$ is shown to have rank 2 in \cite[Proposition~2.2]{Cartwright15}.  Our goal is to construct a rank 2 tropical linear series $\Sigma_M \subseteq R(D_M)$.  We will then prove an analogue of \cite[Theorem~3.4]{Cartwright15}, that if there exists a curve $C$ over a nonarchimedean field $K$ with skeleton $\Gamma_M$, and a rank 2 linear series on $C$ that specializes to $\Sigma_M$, then the matroid $M$ is realizable over $K$.  It follows that, if $M$ is non-realizable, then $\Sigma_M$ cannot be the tropicalization of a linear series on an algebraic curve.  Since there are many examples of non-realizable rank 3 matroids, this construction produces many examples of non-realizable tropical linear series.

We now construct the tropical linear series $\Sigma_M$.  For each element $e \in E$, let $\varphi_e$ be the function whose restriction to each edge is linear, and takes the following values at vertices:
\begin{displaymath}
\varphi_e (v) = \left\{ \begin{array}{ll}
2 & \textrm{if $v=e$,} \\
1 & \textrm{if $v=f \in F, f \ni e$,}\\
0 & \textrm{otherwise.}
\end{array} \right.
\end{displaymath}

\begin{proposition}
The tropical submodule $\Sigma_M = \langle \varphi_e \mbox{ } \vert \mbox{ } e \in E \rangle$ is a tropical linear series of rank 2.
\end{proposition}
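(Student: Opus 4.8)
The goal is to verify the three conditions of Definition~\ref{Def:TLS} with $r = 2$. The starting point is a direct computation of the divisors of the generators: writing $t_e$ for the number of rank-$2$ flats containing $e$, one finds
\[
\ddiv(\varphi_e) + D_M \;=\; (t_e + 1)\,e \;+\; \sum_{f \in F,\ f \ni e} (\vert f\vert - 2)\, f,
\]
which is effective since $M$ is simple (so $t_e \geq 1$ and $\vert f\vert \geq 2$); hence $\Sigma_M \subseteq R(D_M)$. The same local computation shows that along every tangent vector of $\Gamma_M$ each generator $\varphi_e$ has slope $-1$, $0$, or $1$, so every function in $\Sigma_M$ has slope in $\{-1,0,1\}$ along every tangent vector, because the slope of a pointwise minimum along $\zeta$ at $v$ is the minimum of the slopes of the terms attaining the minimum at $v$. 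In particular, once conditions (1) and (2) are in hand, Lemma~\ref{Lem:Slopes} forces $s_\zeta(\Sigma_M) = (-1,0,1)$ for every $\zeta$.

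Condition (2) is the easiest, and I would do it first. By Lemma~\ref{Lem:GeneratorDependence} it suffices to show that any four of the generators are tropically dependent. Any four distinct elements of $E$ form a set of rank at most $3$, hence a dependent set, so they contain a circuit $C$, and a circuit of a simple rank-$3$ matroid has three or four elements. I claim $\vartheta = \min_{e \in C}\varphi_e$, with all coefficients $0$, is a dependence. At an element-vertex $e'$ the values $\varphi_e(e')$ for $e \in C$ are $2$ if $e = e'$ and $0$ otherwise, so the minimum is attained by the $\geq 2$ elements of $C$ other than $e'$; at a flat-vertex $f$ the values $\varphi_e(f)$ are $1$ for $e \in C\cap f$ and $0$ for $e \in C\setminus f$, and $\vert C\cap f\vert \leq 2$ unless $\vert C\vert = 3$ and $f = \overline{C}$ (in which case all values are $1$), so there are again at least two minimizers; and on the open edge from $e'$ to a flat $f \ni e'$ the restricted generators take the value $2-t$ for $e = e'$, the value $t$ for $e \in (C\cap f)\setminus\{e'\}$, and the value $0$ for $e \in C\setminus f$, and one checks, using that no rank-$2$ flat contains three elements of a four-element circuit and that the closure of two elements of a three-element circuit equals $\overline{C}$, that at least two terms attain the minimum. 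Since a superset of a tropically dependent set is tropically dependent (raise the extra coefficients high enough, using compactness of $\Gamma_M$), condition (2) follows.

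For condition (1), the plan is to follow Cartwright's proof that $D_M$ has rank $2$ (\cite[Proposition~2.2]{Cartwright15}) and to check at each step that the piecewise-linear function exhibiting $D_M - x - y$ as effective is a tropical linear combination of the $\varphi_e$; the building blocks are $\varphi_e$ itself (which handles a double point at an element-vertex because $t_e+1 \geq 2$) together with tropical minima of two or three generators, organized by the positions of $x$ and $y$ relative to the bipartition $E \sqcup F$. For condition (3): when $i = 0$ one takes $\langle\varphi_e\rangle$ for any generator with $s_\zeta(\varphi_e) = -1$, which exists in every case by inspection of the generators along $\zeta$; when $i = 1$ one must produce a rank-$1$ tropical linear series contained in $\{\varphi \in \Sigma_M : s_\zeta(\varphi) \leq 0\}$. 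Here I expect the right object to be the tropical analogue of a \emph{pencil} in the hyperplane-arrangement model underlying Cartwright's construction --- a rank-$1$ subseries of $\Sigma_M$ associated to a suitable rank-$2$ flat of $M$ near the base point of $\zeta$, mirroring the algebraic picture in which rank-$1$ subseries of the degree-$\vert E\vert$, rank-$2$ map determined by the $\varphi_e$ correspond to projections from a point --- and verifying its three defining properties is itself a rank-$1$ instance of Definition~\ref{Def:TLS}.

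The main obstacle is condition (3) in the case $i = 1$: one must pin down the correct rank-$1$ subseries attached to each tangent vector and then verify conditions (1)--(3) for it, which is precisely where the recursive structure of Definition~\ref{Def:TLS} must be unwound, and it is also where the combinatorics of rank-$2$ flats of $M$ enters most essentially. Condition (1) is a finite but somewhat tedious case analysis, and condition (2), as above, is essentially immediate from the matroid axioms.
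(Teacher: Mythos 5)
Your treatment of conditions (1) and (2) matches the paper's: condition (1) is handled by rechecking that the functions in Cartwright's case analysis are tropical linear combinations of the $\varphi_e$, and condition (2) by showing that $\min_{e\in C}\{\varphi_e\}$ is a dependence for each circuit $C$ (with $|C|\in\{3,4\}$) and invoking Lemma~\ref{Lem:GeneratorDependence}; your divisor computation and the slope bookkeeping are also correct. The problem is condition (3) for $i=1$, which you explicitly flag as ``the main obstacle'' and do not resolve. Since this is the only genuinely nontrivial new content of the proposition beyond Cartwright's work, the proposal has a real gap there: you never produce the rank-$1$ subseries, and your guess about what it should be --- ``a rank-$1$ subseries associated to a suitable rank-$2$ flat of $M$ near the base point of $\zeta$'' --- points in the wrong direction.

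The construction that works is global in the flats, not local. For each rank-$2$ flat $f\in F$ set $\varphi_f=\min_{e\in f}\{\varphi_e\}$, so that $\varphi_f(f)=1$ and $\varphi_f(v)=0$ at every other vertex. Then $\Upsilon=\langle \varphi_f \mid f\in F\rangle$ is a rank-$1$ tropical linear series contained in $\{\varphi\in\Sigma_M \mid s_\zeta(\varphi)\le 0\}$ whenever $\zeta$ is directed toward an element-vertex $e$, and $\Upsilon_e=\langle \varphi_e,\ \varphi_f \mid f\not\ni e\rangle$ does the job whenever $\zeta$ is directed away from $e$. So the dependence on $\zeta$ enters only through the element-vertex of the edge containing $\zeta$ and the orientation, and the generators of the subseries range over all flats (respectively, all flats not containing $e$), rather than being attached to a single flat near $\zeta$. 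One then still has to verify that $\Upsilon$ and $\Upsilon_e$ satisfy conditions (1)--(3) of Definition~\ref{Def:TLS} with $r=1$ (a direct check with these explicit generators), which is the recursive step you correctly anticipated but left unexecuted. As written, your proposal would not compile into a proof without this construction being supplied.
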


\begin{proof}
The proof of \cite[Proposition~2.2]{Cartwright15} is a careful case analysis, showing that for any effective divisor $D'$ of degree 2 on $\Gamma_M$, there is a function $\varphi \in R(D_M)$ such that $\ddiv(\varphi) + D_M \geq D'$.  The functions $\varphi$ constructed in this proof are all tropical linear combinations of the functions $\varphi_e$.  This demonstrates property (1).

By Lemma~\ref{Lem:GeneratorDependence}, it suffices to check that property (2) holds for the functions $\varphi_e$.  We prove the stronger statement that, if $C \subseteq E$ is a circuit, then $\vartheta = \min_{e \in C} \{ \varphi_e \}$ is a dependence.  First, consider the case where $\vert C \vert = 4$.  In this case, we see that $\vartheta = \min_{e \in C} \{ \varphi_e \}$ is identically zero, and on an edge adjacent to a flat $f$, the minimum is achieved at least twice, by the functions $\varphi_e$ such that $e \notin f$.  The tropical linear combination $\vartheta$ is therefore a dependence.

Next, consider the case where $\vert C \vert =3$.  Then there exists a unique rank 2 flat $f$ such that $C \subseteq f$.  In this case, since every pair of elements is contained in a unique rank 2 flat, on each edge other than those adjacent to $f$, $\vartheta$ is identically zero.  If such an edge is adjacent to an element $e$, the minimum is achieved at least twice, by the functions $\varphi_{e'}$ with $e' \neq e$.  Similarly, on an edge from an element $e$ to the flat $f$, $\vartheta$ has slope 1.  Again, the minimum is achieved at least twice, by the functions $\varphi_{e'}$ with $e' \neq e$.

We now prove property (3).  For any flat $f \in F$, let $\varphi_f = \min_{e \in f} \{ \varphi_e \}$.  We have $\varphi_f (f) = 1$, and $\varphi_f (v) = 0$ for all vertices $v \neq f$.  Let $\Upsilon = \langle \varphi_f \mbox{ } \vert \mbox{ } f \in F \rangle$, and for each element $e \in E$, let $\Upsilon_e = \langle \varphi_e , \varphi_f \mbox{ } \vert \mbox{ } f \not\ni e \rangle$.  If $\zeta$ is a tangent vector on $\Gamma_M$ directed toward an element $e \in E$, then $\Upsilon \subseteq \{ \varphi \in \Sigma \mbox{ } \vert \mbox{ } s_{\zeta} (\varphi) \leq 0 \}$.  Similarly, if $\zeta$ is a tangent vector on $\Gamma_M$ directed away from an element $e \in E$, then $\Upsilon_e \subseteq \{ \varphi \in \Sigma \mbox{ } \vert \mbox{ } s_{\zeta} (\varphi) \leq 0 \}$.  It is not too difficult to verify that $\Upsilon$ and $\Upsilon_e$ are rank 1 tropical linear series.  Since $\Sigma_M$ satisfies properties (1)-(3), it is a rank 2 tropical linear series.
\end{proof}

Since $\Sigma_M$ is finitely generated, by Lemma~\ref{Lem:FGClosed}, $\vert \Sigma_M \vert$ is a closed definable subset of $\vert D_M \vert$.  Thus, $\Sigma_M$ satisfies property (4) of a strong tropical linear series as well.  However, we do not know whether $\Sigma_M$ satisfies property (5).  The proof of the proposition above shows that there is a rank 3 valuated matroid on the generating set $\{ \varphi_e \mbox{ } \vert \mbox{ } e \in E \}$ such that every valuated circuit gives a tropical dependence, but we don't know if this extends to a valuated matroid on $\Sigma_M$ with this property.

This construction yields many examples of non-realizable tropical linear series.

\begin{lemma}
If $\Sigma_M$ is the tropicalization of a tropical linear series on a curve over a nonarchimedean field $K$, then $M$ is realizable over $K$.
\end{lemma}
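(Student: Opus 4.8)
The plan is to adapt the proof of \cite[Theorem~3.4]{Cartwright15}, producing a realization of $M$ inside the $3$-dimensional $K$-vector space underlying the given linear series. Suppose $C$ is a curve over $K$ with a semistable model $\mathcal{C}$ whose metric dual graph is $\Gamma_M$, and let $V \subseteq \mathcal{L}(D_C)$ be a linear series of rank $2$, so $\dim_K V = 3$, with $\Trop(D_C) = D_M$ and $\trop(V) = \Sigma_M$. Since $\Sigma_M = \langle \varphi_e \mid e \in E \rangle$, for each $e \in E$ we may choose $h_e \in V$ with $\trop(h_e) = \varphi_e$. A choice of basis identifies $V$ with $K^3$, and it suffices to show that the assignment $e \mapsto h_e \in K^3$ realizes $M$. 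Because a matroid is determined by its independent sets together with its circuits, and any dependent set contains a circuit, it is enough to prove: (i) if $B \subseteq E$ is independent in $M$, then $\{ h_e \mid e \in B \}$ is linearly independent over $K$; and (ii) if $Z \subseteq E$ is a circuit of $M$, then $\{ h_e \mid e \in Z \}$ is minimally linearly dependent.

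I would establish (i) using tropical independence together with specialization. As $M$ is simple of rank $3$, a nonempty independent set has one, two, or three elements. A singleton causes no trouble, since each $h_e$ is nonzero. If $\{e,e'\}$ is independent, then $h_e$ and $h_{e'}$ are not proportional: otherwise $\varphi_e - \varphi_{e'}$ would be constant, but it equals $2$ at the vertex $e$ and $-2$ at the vertex $e'$. If $B = \{e_1, e_2, e_3\}$ is a basis, then since $B$ has rank $3$, no $e_i$ lies in the rank-$2$ flat $f_i \in F$ containing the other two, so at the corresponding vertex $f_i$ of $\Gamma_M$ one has $\varphi_{e_i}(f_i) = 0 < 1 = \varphi_{e_j}(f_i) = \varphi_{e_k}(f_i)$. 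Hence $\vartheta := \min\{ \varphi_{e_1}, \varphi_{e_2}, \varphi_{e_3} \}$ has $\varphi_{e_i}$ as its unique minimizer at $f_i$ for each $i$, so $\vartheta$ is a certificate of independence and $\{ \varphi_{e_1}, \varphi_{e_2}, \varphi_{e_3} \}$ is tropically independent. By the specialization principle recalled above, that a linear dependence among rational functions tropicalizes to a tropical dependence, the set $\{ h_{e_1}, h_{e_2}, h_{e_3} \}$ is then linearly independent over $K$. Granting (i), the case of (ii) with $|Z| = 4$ is automatic: the four vectors $\{h_e \mid e \in Z\}$ in $K^3$ are dependent, and every three-element subset of $Z$ is an independent set of $M$, hence linearly independent by (i).

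The remaining case of (ii), and the main point, is a three-element circuit $Z = \{e_1, e_2, e_3\}$, which lies in the rank-$2$ flat $f = \overline{Z}$: one must prove $\{h_{e_1}, h_{e_2}, h_{e_3}\}$ is linearly dependent over $K$. Here combinatorics does not suffice: $\min\{\varphi_{e_1}, \varphi_{e_2}, \varphi_{e_3}\}$ is a tropical dependence, as shown in the proof of the proposition, but a tropical dependence need not lift to a linear relation, and this is precisely where non-realizability of $M$ obstructs lifting. Following \cite[Theorem~3.4]{Cartwright15}, the plan is to pass to the morphism $\phi \colon C \to \mathbb{P}^2$ determined by $V$, extend it over $\Spec R$ after refining $\mathcal{C}$ if necessary, and study the reductions of $V$ along the components of the special fiber. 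A short computation shows that along every edge of $\Gamma_M$ the slopes realized by $\Sigma_M$ are exactly $\{-1,0,1\}$; this slope data, together with the bipartite structure of $\Gamma_M$ and Lemma~\ref{Lem:Slopes}, constrains these reductions enough to extract a configuration of points and lines in $\mathbb{P}^2$ whose incidences are those of $M$, and in particular the three points attached to $e_1, e_2, e_3$ are collinear. One then promotes this collinearity to a relation $c_1 h_{e_1} + c_2 h_{e_2} + c_3 h_{e_3} = 0$ over $K$, using that $\dim_K V = 3$; and since $\{h_{e_1}, h_{e_2}\}$ is linearly independent by (i), applied to the independent set $\{e_1,e_2\}$ of $M$, this relation is minimal, completing (ii).

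The step I expect to be the genuine obstacle is this last one: turning the residual incidence data coming from the reduction into an honest $K$-linear dependence among $h_{e_1}, h_{e_2}, h_{e_3}$, carried out uniformly over all three-element circuits. This is the heart of Cartwright's argument, and the part I would reproduce most carefully; everything else, including identifying the configuration, checking incidences, and the final formal verification that $e \mapsto h_e$ gives $M$ and not a weaker matroid, is bookkeeping with the slope data of $\Sigma_M$ and the structure of the Levi graph.
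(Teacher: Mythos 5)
Your reductions are fine as far as they go: lifting each $\varphi_e$ to $h_e \in V \cong K^3$, proving linear independence of lifts of $M$-independent sets via certificates of tropical independence and specialization, and disposing of the four-element circuits by dimension count are all correct. But the proof has a genuine gap exactly where you flag it: for a three-element circuit $Z = \{e_1,e_2,e_3\} \subseteq f$ you must produce an honest relation $c_1 h_{e_1} + c_2 h_{e_2} + c_3 h_{e_3} = 0$ over $K$, and you only gesture at "following \cite[Theorem~3.4]{Cartwright15}" by reducing the map to $\mathbb{P}^2$ and reading off collinearity in the special fiber. This is not bookkeeping; it is the entire content of the lemma. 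A tropical dependence among $\varphi_{e_1},\varphi_{e_2},\varphi_{e_3}$ does not by itself force a linear dependence among lifts (three generic vectors in $K^3$ whose tropicalizations happen to be tropically dependent need not be coplanar in any proper subspace --- they span $V$), so something must use the global structure of $\Sigma_M$, and you have not identified what. Moreover, Cartwright's argument is calibrated to a weaker hypothesis (a rank-$2$ divisor class lifting $D_M$) and a different conclusion, so it cannot simply be cited; the adaptation to the hypothesis that the whole tropical submodule $\Sigma_M$ is the tropicalization of $V$ is precisely what needs to be written down.

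For comparison, the paper avoids the reduction-mod-$\mathfrak{m}$ analysis entirely. It first reduces to the case $|E| \geq |f|+2$ for every rank-$2$ flat $f$ (otherwise $M$ is realizable over any infinite field), and then proves a uniqueness statement: $M$ with the trivial valuation is the \emph{only} rank-$3$ valuated matroid on $\{\varphi_e\}$ all of whose valuated circuits yield tropical dependences. The argument is purely combinatorial, using the valuated circuit elimination axiom to rule out any valuated circuit whose support strictly contains a $3$-circuit of $M$. Combined with Proposition~\ref{Prop:Tropicalization}, which supplies a valuated matroid realizable over $K$ with exactly this dependence property whenever $\Sigma_M$ is a tropicalization, uniqueness forces that realizable valuated matroid to be $M$, and realizability over $K$ follows. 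In particular the paper never needs to exhibit the linear relation among the $h_{e_i}$ directly; the relation is produced for free by the realizable valuated matroid from Proposition~\ref{Prop:Tropicalization}, and the work goes into showing no other valuated matroid is compatible with the tropical dependences. If you want to complete your route, you would need to carry out the special-fiber incidence analysis in detail; alternatively, the uniqueness-of-valuated-matroids argument is the cleaner path and is the one the paper takes.
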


\begin{proof}
If there is a rank 2 flat $f$ with $\vert f \vert = \vert E \vert - 1$, then $M$ is realizable over any infinite field.  We may therefore assume that $\vert E \vert \geq \vert f \vert +2$ for all rank 2 flats $f$.  We now prove that $M$ (with the trivial valuation) is the unique rank-3 valuated matroid on $E$ such that, if $V$ is a valuated circuit, then $\min \{ \varphi_e + V(\varphi_e) \}$ is a dependence.  By Proposition~\ref{Prop:Tropicalization} below, this implies that $M$ is realizable over $K$.

First, note that the circuits of $M$ are the minimal tropically dependent subsets of $\{ \varphi_e \mbox{ } \vert \mbox{ } e \in E \}$.  To see this, note that every set of 2 or fewer functions is tropically independent.  Further, if $\{ \varphi_{e_1}, \varphi_{e_2} , \varphi_{e_3} \}$ is a set of size 3 that is not contained in a flat, let $f$ be the unique flat containing $e_1$ and $e_2$.  Then, on the edge from $e_1$ to $f$, the 3 functions have distinct slopes, and are therefore tropically independent.  Moreover, this shows that up to tropical scaling, $\vartheta = \min_{e \in C} \{ \varphi_e \}$ is the unique dependence among the functions $\{ \varphi_e \mbox{ } \vert \mbox{ } e \in C \}$ for any circuit $C$ in $M$.

Let $M'$ be a valuated matroid of rank 3 with the property that if $V$ is a valuated circuit, then $\min \{ \varphi_e + V(\varphi_e) \}$ is a dependence.  By the above, the circuits of $M$ are the minimal tropically dependent subsets, so if $V$ is a valuated circuit of $M'$, then $\supp (V)$ contains a circuit of $M$.  Moreover, if $\supp (V)$ is equal to a circuit in $M$, then $V$ must be constant on its support.  On the other hand, if $\supp (V)$ strictly contains a circuit $C$ of $M$, then since $M'$ has rank 3, we must have $\vert C \vert = 3$ and $\vert \supp (V) \vert = 4$.  Since $\vert C \vert = 3$, there is a unique flat $f$ such that $C \subseteq f$.  Let $e_1 , e_2 \in E \smallsetminus f$.  Since every set of size 4 contains the support of a valuated circuit in $M'$ and that support contains a circuit in $M$, we see that there exist valuated circuits $V_1 , V_2$ in $M'$ such that $\supp (V_i) = C \cup \{ e_i \}$.  Let $C = \{ e_3 , e_4 , e_5 \}$.   We have $\varphi_{e_i} (f) = 0$ for $i \leq 2$ and $\varphi_{e_i} (f) = 1$ for $i \geq 3$, so $V_i (\varphi_{e_i}) \geq \min_{j \geq 3} \{ V_i (\varphi_{e_j}) + 1 \}$.  For $j \geq 3$, the only two functions in $\supp(V_i)$ that have the same slope along the edge from $e_j$ to $f$ are the functions $\varphi_{e_k}$ for $e_k \in C \smallsetminus \{ e_j \}$.  It follows that $V_i (\varphi_{e_j}) = V_i (\varphi_{e_k})$ for all $j,k \geq 3$.  By the circuit axiom, there exists a valuated circuit $W$ such that $W(\varphi_{e_5}) = \infty$, $W(\varphi_{e_3}) = V_1(\varphi_{e_3})$, and $W \geq \min \{ V_1, V_2 \}$.  By construction, $\varphi_{e_3} + W(\varphi_{e_3})$ obtains the minimum at $f$, but it is the only function with positive slope on the edge from $e_4$ to $f$.  Thus, the tropical linear combination $\vartheta = \min_{i \leq 4} \{\varphi_{e_i} + W(\varphi_{e_i}) \}$ is not a dependence.  It follows that, for every valuated circuit $V$ in $M'$, $\supp (V)$ is equal to a circuit in $M$, and $V$ is constant on its support.  In other words, $M'$ is equal to $M$ with the trivial valuation.
\end{proof}
\end{example}

\section{Tropicalization of Linear Series}
\label{Sec:TLS}

We now show that the tropicalization of a linear series is a strong tropical linear series.

\begin{proposition}
\label{Prop:Tropicalization}
Let $C$ be a curve over a valued field $K$ with a skeleton $\Gamma \subset C^{\an}$.  Let $D$ be a divisor on $C$ and $V \subseteq \cL (D)$ a linear series of rank $r$.  Then
\[
\Sigma = \trop (V) := \{ \trop (f) \in \PL (\Gamma) \mbox{ } \vert \mbox{ } f \in V \smallsetminus \{ 0 \} \}
\]
is a strong tropical linear series of rank $r$.  Indeed, if $K$ has value group $G$, then $\vert \Sigma \vert$ is a $G$-definable subset of $\vert \Trop (D) \vert$, and the valuated matroid in property (5) can be chosen to be realizable over $K$.
\end{proposition}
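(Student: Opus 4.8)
The plan is to verify, by induction on $r$, that $\Sigma\subseteq R(\overline D)$ with $\overline D:=\Trop(D)$ satisfies conditions (1)--(5) of Definitions~\ref{Def:TLS} and~\ref{Def:StrongTLS}; we may assume $K$ is algebraically closed. The containment and the fact that $\Sigma$ is a tropical submodule are formal: $\ddiv(\trop f)=\Trop(\ddiv f)$, $\Trop$ preserves the partial order on divisors, and tropicalizations of linear subspaces of $K(C)$ are closed under tropical linear combinations. When $r=0$ the space $V$ is one-dimensional and $\Sigma$ is a principal submodule, so there is nothing to prove; assume $r\ge 1$ and that the proposition holds in all ranks $<r$. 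For condition (1), given an effective divisor $E$ of degree $r$ on $\Gamma$ with rational support, lift the points of $E$ to $K$-points $P_1,\dots,P_r$ of $C$ with $\Trop(P_k)$ the corresponding point; since $V$ has rank $r$ there is a nonzero $f\in V$ with $\ddiv f+D\ge P_1+\cdots+P_r$, and then $\ddiv(\trop f)+\overline D=\Trop(\ddiv f+D)\ge E$. For general $E$ one approximates by divisors with rational support and passes to a limit, using that $|\Sigma|$ is closed (established below) and compactness of the set of normalized functions in $R(\overline D)$. For condition (2), any $r+2$ elements of $\Sigma$ are the tropicalizations of $r+2$ elements of the $(r+1)$-dimensional space $V$, hence of a collection satisfying a nontrivial linear relation $\sum c_i f_i=0$; discarding the indices with $c_i=0$, the identity $\val_v(\sum c_i f_i)=\infty$ forces the minimum in $\min_i\{\val(c_i)+\trop(f_i)(v)\}$ to be attained at least twice at every $v\in\Gamma$, so $\min_i\{\trop(f_i)+\val(c_i)\}$ is a tropical dependence.

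For condition (5), fix a basis of $V$, choose for each $\varphi\in\Sigma$ a witness $s(\varphi)\in V$ with $\trop(s(\varphi))=\varphi$, arranged so that $\{s(\varphi)\}$ spans $V$, and let $\mathcal V$ be the rank $r+1$ valuated matroid on $\Sigma$ realized over $K$ by the configuration $\{s(\varphi)\}_{\varphi\in\Sigma}$. If $V\in\mathcal V$ is a valuated circuit, its nonzero entries are the valuations of the coefficients of a minimal linear relation among the chosen witnesses, and the computation used for (2) shows that $\min_\varphi\{\varphi+V(\varphi)\}$ is a tropical dependence; in particular (5) implies (2). For condition (4), $|\Sigma|$ is the image of the linear system $\mathbb P(V)\hookrightarrow\Sym^d C$ under the componentwise tropicalization map $\Sym^d(C^{\an})\to\Sym^d\Gamma$; by the structure theory of tropicalizations of subvarieties this image is a $G$-definable subset of the polyhedral complex $\Sym^d\Gamma$, and it is closed because it is the continuous image of the compact analytification $\mathbb P(V)^{\an}$.

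The heart of the matter is condition (3), which is where the induction is used. Fix a tangent vector $\zeta$ based at $v$ and an index $0\le i<r$. Having verified (1) and (2), Lemma~\ref{Lem:Slopes} gives exactly $r+1$ slopes $s_\zeta[0]<\cdots<s_\zeta[r]$ of functions in $\Sigma$ along $\zeta$. Choosing a semistable model in which $v$ is a vertex, with corresponding component $C_v$ and $\overline z\in C_v$ the node encoding $\zeta$, the reductions of the elements of $V$ (normalized to have $\val_v$ equal to $0$) form an $(r+1)$-dimensional linear series $\overline L$ on $C_v$, and the slope formula $s_\zeta(\trop g)=\ord_{\overline z}(\overline g)$ identifies $\{s_\zeta[0],\dots,s_\zeta[r]\}$ with the vanishing sequence of $\overline L$ at $\overline z$. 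The subspace of $\overline L$ of sections vanishing at $\overline z$ to order $>s_\zeta[i]$ has codimension $i+1$, so a generic $(i+1)$-dimensional subspace $\overline W\subseteq\overline L$ meets it only in $0$; hence every nonzero element of $\overline W$ vanishes at $\overline z$ to order at most $s_\zeta[i]$. Lifting $\overline W$ to an $(i+1)$-dimensional subspace $W\subseteq V$, the slope formula gives $s_\zeta(\trop g)\le s_\zeta[i]$ for all nonzero $g\in W$, so $\trop(W)\subseteq\{\varphi\in\Sigma\mid s_\zeta(\varphi)\le s_\zeta[i]\}$; and $W$, being an $(i+1)$-dimensional linear series, has rank $i$, so by the inductive hypothesis $\trop(W)$ is a tropical linear series of rank $i$. (When $v$ lies in the interior of an edge one runs the same argument after inserting $v$ as a vertex, so that $C_v\cong\mathbb P^1$.) This establishes (3) and completes the induction.

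The fussiest points are the bookkeeping around the value group and base change -- that points of $\Gamma$ admit $K$-rational lifts, that ranks and tropicalizations are unaffected, and that the closedness in (4) and the general case of (1) go through -- while the genuine conceptual obstacle is the construction of $W$ in condition (3): the naive candidate $\{g\in V\mid s_\zeta(\trop g)\le s_\zeta[i]\}$ is not a linear subspace, which forces one to pass to the reduction $\overline L$ on the component curve, where the vanishing filtration is an honest flag, and to invoke the slope formula to transport the bound back to $\Gamma$.
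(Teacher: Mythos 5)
Your proposal is correct, and for conditions (1), (2), (4), and (5) it runs essentially parallel to the paper's proof: lift $E$ to $E_C$ and use the algebraic rank for (1), tropicalize a linear relation for (2), use compactness of $\mathbb{P}(V)^{\an}$ plus quantifier elimination/definability of images of semialgebraic sets for (4), and realize the valuated matroid by lifting elements of $\Sigma$ back to $V$ for (5). The real divergence is condition (3). You pass to a semistable model, reduce $V$ to an $(r+1)$-dimensional linear series $\overline{L}$ on the component $C_v$, identify $s_\zeta[0]<\cdots<s_\zeta[r]$ with the vanishing sequence of $\overline{L}$ at the node via the slope formula, choose a complement $\overline{W}$ to the codimension-$(i+1)$ piece of the vanishing flag, and lift. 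This works, but the paper is considerably lighter: it simply picks $f_0,\ldots,f_r\in V$ with $s_\zeta(\trop(f_j))=s_\zeta[j]$, notes that distinct slopes along $\zeta$ force tropical (hence linear) independence, and takes $W=\operatorname{span}(f_0,\ldots,f_i)$; for any nonzero $g=\sum_{j\le i}c_jf_j$ one has $\trop(g)=\min_j\{\val(c_j)+\trop(f_j)\}$ on a small interval in the direction $\zeta$ (the minimum is achieved twice only at finitely many points there, since the slopes are distinct), so $s_\zeta(\trop(g))\le s_\zeta[i]$. In particular your remark that the failure of $\{g\mid s_\zeta(\trop g)\le s_\zeta[i]\}$ to be a subspace \emph{forces} passage to the reduction is an overstatement: condition (3) only asks for \emph{some} rank-$i$ subseries inside that set, and the span of functions realizing the bottom $i+1$ slopes already provides one without invoking reduction of linear series or vanishing sequences. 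What your route buys is a more structural picture (the Amini--Baker identification of slope vectors with vanishing sequences at nodes, which the paper mentions but does not use); what the paper's route buys is elementarity and independence from the choice of model. Your extra approximation step in (1) for non-maximally-complete fields, and the reduction to $K$ algebraically closed, are reasonable points of care that the paper elides.
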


\begin{proof}
Condition (1) of Definition~\ref{Def:TLS} is \cite[Proposition~6.5]{M23}.  For completeness, we include the proof.  Following \cite{Baker08}, let $E$ be an effective divisor of degree $r$ on $\Gamma$ and $E_C$ be an effective divisor on $C$ such that $\Trop (E_C) = E$.  Since $V$ has rank $r$, there exists a function $f \in V$ such that $\ddiv (f) + D \geq E_C$.  Hence, $\ddiv (\trop (f)) + \Trop (D) \geq E$.

Condition (2) holds because $V$ is a vector space of dimension $r+1$, so any $r+2$ elements are linearly dependent.  If a set of nonzero functions is linearly dependent, then the set of their tropicalizations is tropically dependent, and the result follows.

Condition (3) of Definition~\ref{Def:TLS} follows from induction on $r$, the case $r=0$ being vacuous. Let $f_0 , \ldots , f_r \in V$ be functions satisfying $s_{\zeta} (\trop (f_i)) = s_{\zeta} [i]$.  Because the functions $\trop (f_i)$ have distinct slopes along $\zeta$, they are tropically independent, and thus the set of functions $\{ f_0 , \ldots , f_r \}$ is linearly independent.  It follows that the set of functions $\{ f_0 , \ldots , f_i \}$ spans a linear subseries $W$ of rank $i$.  By induction, $\trop (W) \subseteq \{ \varphi \in \Sigma \mbox{ } \vert \mbox{ } s_{\zeta} (\varphi) \leq s_{\zeta} [i] \}$ is a tropical linear subseries of rank $i$.

To see condition (4), note that $\vert \Sigma \vert$ is the image of the analytification of the projectivization of $V$, under the continuous map to $\vert \Trop(D) \vert$.  Since this analytification is compact, it follows that $\vert \Sigma \vert$ is compact, and hence closed.  To see that $|\Sigma|$ is also definable, choose a basis for $V$, giving an isomorphism $V \cong K^n$.   
By elimination of quantifiers in the theory of algebraically closed valued fields of fixed characteristic and residue characteristic \cite{Robinson56}, the image of a semi-algebraic subset of $K^n$ under the valuation is the set of $G$-rational points of a $G$-definable set.  Thus, the image of $V \smallsetminus \{ 0 \}$ under tropicalization is a definable subset of $\vert \Trop(D) \vert$.

Finally, each $\varphi \in \Sigma$ that is rational over the value group lifts to a rational function $f \in V$.  These rational functions give rise to a realizable matroid in which $v : V \to \mathbb{R} \cup \{ \infty \}$ is a valuated circuit if there is a minimal linear dependence $\sum_{i=0}^s a_i \cdot f_i = 0$ with $\val (a_i) = v(f_i)$.  Then property (5) is satisfied, because each such linear dependence tropicalizes to a tropical dependence.

\end{proof}

\subsection{Restrictions of Tropical Linear Series}
\label{Sec:Restrictions}

An important property of tropical linear series is that they are preserved by restrictions to subgraphs.

\begin{lemma}
\label{Lem:Restriction}
Let $\Gamma$ be a metric graph, $D$ a divisor on $\Gamma$, and $\Sigma \subseteq R(D)$ a tropical linear series or rank $r$ on $\Gamma$.  If $\Gamma' \subseteq \Gamma$ is a metric subgraph, let $D'$ be the divisor on $\Gamma'$ given by
\[
D'(w) := D(w) - \min_{\varphi \in \Sigma} \bigg\{ \sum_{\zeta} s_{\zeta}(\varphi) \bigg\} \mbox{ for all } w \in \Gamma',
\]
where the sum is over all tangent vectors $\zeta$ from $w$ into the complement of $\Gamma'$.  Then the restriction
\[
\Sigma \vert_{\Gamma'} := \{ \varphi \vert_{\Gamma'} \mbox{ } \vert \mbox{ } \varphi \in \Sigma \} \subseteq R(D')
\]
is a tropical linear series of rank $r$ on $\Gamma'$.
\end{lemma}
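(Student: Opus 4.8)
The plan is to argue by induction on the rank $r$. The base case $r = 0$ is immediate: a rank-$0$ tropical linear series is a principal submodule $\{\varphi + a \mid a \in \RR\}$, whose restriction $\{\varphi\vert_{\Gamma'} + a \mid a \in \RR\}$ is again principal. For the inductive step, the first thing I would establish (and which is needed for the base case too) is the containment $\Sigma\vert_{\Gamma'} \subseteq R(D')$. This is a local computation at each $w \in \Gamma'$: writing $\ord_w^{\Gamma'}$ for the order operator on $\Gamma'$, one has
\[
\ord_w^{\Gamma'}(\varphi\vert_{\Gamma'}) = \ord_w(\varphi) + \sum_{\zeta} s_{\zeta}(\varphi),
\]
where the sum runs over tangent vectors $\zeta$ at $w$ pointing into $\Gamma \smallsetminus \Gamma'$, since the outgoing slopes of $\varphi\vert_{\Gamma'}$ along directions in $\Gamma'$ coincide with those of $\varphi$. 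Combining $\ord_w(\varphi) + D(w) \geq 0$ (as $\varphi \in R(D)$) with $\sum_{\zeta} s_{\zeta}(\varphi) \geq \min_{\psi \in \Sigma}\{ \sum_{\zeta} s_{\zeta}(\psi)\}$ gives $\ord_w^{\Gamma'}(\varphi\vert_{\Gamma'}) + D'(w) \geq 0$. (The minimum defining $D'$ is finite because $R(D)$ is finitely generated, so the slopes take finitely many values.) That $\Sigma\vert_{\Gamma'}$ is a tropical submodule is clear, since restriction to $\Gamma'$ commutes with pointwise minimum and scalar addition.

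Next I would verify properties (1) and (2) of Definition~\ref{Def:TLS}. For property (1): given an effective divisor $E'$ of degree $r$ on $\Gamma'$, regard it as a divisor on $\Gamma$ and apply property (1) for $\Sigma$ to get $\varphi \in \Sigma$ with $\ddiv(\varphi) + D \geq E'$; the same local identity as above, now with $\ord_w(\varphi) + D(w) \geq E'(w)$, yields $\ord_w^{\Gamma'}(\varphi\vert_{\Gamma'}) + D'(w) \geq E'(w)$ at every $w \in \Gamma'$. For property (2): given $r+2$ distinct functions in $\Sigma\vert_{\Gamma'}$, lift each to a function in $\Sigma$ (the lifts are distinct, so form a set of $r+2$ functions); by property (2) for $\Sigma$ they admit a dependence $\vartheta = \min_j\{\psi_j + a_j\}$, and restricting this tropical linear combination to $\Gamma'$ gives a dependence among the original functions, since the minimum is still attained at least twice at every point of $\Gamma' \subseteq \Gamma$.

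The heart of the argument is property (3), where the induction is essential. The key observation is that slopes are preserved by restriction: a tangent vector $\zeta'$ on $\Gamma'$ is also a tangent vector $\zeta$ on $\Gamma$, and $s_{\zeta'}(\varphi\vert_{\Gamma'}) = s_{\zeta}(\varphi)$ for all $\varphi \in \Sigma$. Hence $s_{\zeta'}(\Sigma\vert_{\Gamma'}) = s_{\zeta}(\Sigma)$, which has exactly $r+1$ entries by Lemma~\ref{Lem:Slopes} applied to the rank-$r$ tropical linear series $\Sigma$; in particular $s_{\zeta'}[i] = s_{\zeta}[i]$ for all $i$. Now fix $0 \leq i < r$. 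By property (3) for $\Sigma$ there is a rank-$i$ tropical linear series $\Sigma^{(i)} \subseteq \{ \varphi \in \Sigma : s_{\zeta}(\varphi) \leq s_{\zeta}[i]\}$, and since $\Sigma^{(i)} \subseteq R(D)$ we may take it to be a tropical linear series with respect to $D$. By the inductive hypothesis applied to $\Sigma^{(i)}$ and the subgraph $\Gamma'$, the restriction $\Sigma^{(i)}\vert_{\Gamma'}$ is a rank-$i$ tropical linear series on $\Gamma'$; its associated divisor is $D(w) - \min_{\varphi \in \Sigma^{(i)}}\{ \sum_{\zeta} s_{\zeta}(\varphi)\} \leq D'(w)$ (minimizing over the smaller set $\Sigma^{(i)}$ gives a larger value), so $\Sigma^{(i)}\vert_{\Gamma'} \subseteq R(D')$ as well. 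By slope preservation, for $\varphi \in \Sigma^{(i)}$ we have $s_{\zeta'}(\varphi\vert_{\Gamma'}) = s_{\zeta}(\varphi) \leq s_{\zeta}[i] = s_{\zeta'}[i]$, so $\Sigma^{(i)}\vert_{\Gamma'}$ lies inside $\{ \psi \in \Sigma\vert_{\Gamma'} : s_{\zeta'}(\psi) \leq s_{\zeta'}[i]\}$, which is exactly what property (3) requires.

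I expect the main obstacle to be bookkeeping rather than any deep idea: one must set up the induction so that the rank-$i$ subseries produced by property (3) is itself covered by the inductive hypothesis, track the divisors carefully enough to see that the restricted subseries lands inside $R(D')$, and — before knowing that $\Sigma\vert_{\Gamma'}$ is a tropical linear series — justify the indexing $s_{\zeta'}[i] = s_{\zeta}[i]$ by invoking Lemma~\ref{Lem:Slopes} for $\Sigma$ rather than for $\Sigma\vert_{\Gamma'}$. The local order formula for $\ord_w^{\Gamma'}(\varphi\vert_{\Gamma'})$ does all the real work in the containment and in properties (1) and (2), and those steps are routine once that formula is in hand.
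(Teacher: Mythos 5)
Your proposal is correct and follows essentially the same route as the paper: the local order formula $\ord_w(\varphi\vert_{\Gamma'}) = \ord_w(\varphi) + \sum_{\zeta} s_{\zeta}(\varphi)$ for properties (1) and the containment in $R(D')$, restriction of a dependence for property (2), and induction on $r$ via restriction of the rank-$i$ subseries for property (3). The extra bookkeeping you supply — that the restricted subseries lands in $R(D')$ because its associated divisor is bounded by $D'$, and that the slope indexing $s_{\zeta'}[i] = s_{\zeta}[i]$ is justified by Lemma~\ref{Lem:Slopes} applied to $\Sigma$ — is detail the paper elides but does not change the argument.
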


\begin{proof}
To see condition (1), let $E$ be an effective divisor of degree $r$ on $\Gamma'$.  Since $\Sigma$ is a tropical linear series of rank $r$ on $\Gamma$, there exists a function $\varphi \in \Sigma$ such that $\ddiv (\varphi) + D \geq  E$.  For any $w \in \Gamma'$, we have
\[
\ord_w (\varphi \vert_{\Gamma'}) = \ord_w (\varphi) + \sum_{\zeta} s_{\zeta} (\varphi),
\]
where the sum is over all tangent vectors $\zeta$ from $w$ into the complement of $\Gamma'$.  It follows that
\[
\ord_w (\varphi \vert_{\Gamma'})  \geq \ord_w (\varphi) + \min_{\psi \in \Sigma} \bigg \{ \sum_{\zeta} s_{\zeta}(\psi) \bigg \} \geq E(w) - D'(w),
\]
hence $\ddiv (\varphi \vert_{\Gamma'}) + D' \vert_{\Gamma'} \geq E$.

For condition (2), let $\{ \varphi_0 , \ldots , \varphi_{r+1} \}$ be a set of $r+2$ functions on $\Gamma$.  Since $\Sigma$ is a tropical linear series of rank $r$ on $\Gamma$, there exist coefficients $b_0 , \ldots , b_{r+1}$ such that $\min \{ \varphi_i + b_i \}$ achieves the minimum at least twice at every point of $\Gamma$.  But then $\min \{ \varphi_i \vert_{\Gamma'} + b_i \}$ achieves the minimum at least twice at every point of $\Gamma'$.

Condition (3) follows by induction on $r$.  The base case $r=0$ is trivial.  Let $\zeta$ be a tangent vector in $\Gamma'$ and $i<r$ a nonnegative integer.  By definition, the set
$\{ \varphi \in \Sigma \mbox{ } \vert \mbox{ } s_{\zeta} (\varphi) \leq s_{\zeta} [i] \}$
contains a tropical linear series $\Upsilon$ of rank $i$.  By induction, the restriction $\Upsilon \vert_{\Gamma'}$ is a tropical linear series of rank $i$ on $\Gamma'$, and the result follows.
\end{proof}

\begin{remark}
Lemma~\ref{Lem:Restriction} is important for applications.  If we restrict the tropicalization of a linear series to a connected subgraph $\Gamma' \subset \Gamma$ of genus $g'$, strictly smaller than the genus of $\Gamma$, then it is unclear whether this is the tropicalization of a linear series on a curve of genus $g'$ with skeleton $\Gamma'$.  However, since it still satisfies conditions (1)-(3) in Definition~\ref{Def:TLS}, we can still apply any constructions that depend only on these three properties.  This is essential in the proofs of \cite[Propositions~10.6 and 10.7]{M23} and \cite[Propositions~5.6 and 5.7]{M13}.
\end{remark}

The restriction property also holds for strong tropical linear series.

\begin{lemma}
\label{Lem:StrongRestriction}
Let $\Gamma$ be a metric graph and $\Sigma$ a strong tropical linear series or rank $r$ on $\Gamma$.  If $\Gamma' \subseteq \Gamma$ is a metric subgraph, then the restriction $\Sigma \vert_{\Gamma'}$ is a strong tropical linear series of rank $r$ on $\Gamma'$.
\end{lemma}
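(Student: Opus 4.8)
The plan is to check that $\Sigma\vert_{\Gamma'}$ satisfies all five properties of Definition~\ref{Def:StrongTLS}. Properties (1)--(3) hold by Lemma~\ref{Lem:Restriction}, which already shows $\Sigma\vert_{\Gamma'}\subseteq R(D')$ is a tropical linear series of rank $r$ on $\Gamma'$; so it remains to verify properties (4) and (5).

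For property (4), I would study the restriction map on divisors
\[
\rho\colon \vert\Sigma\vert \to \vert D'\vert,\qquad D+\ddiv(\varphi)\ \longmapsto\ D'+\ddiv(\varphi\vert_{\Gamma'}),
\]
which is well defined (functions with the same divisor on $\Gamma$ differ by a constant, hence so do their restrictions) and surjects onto $\vert\Sigma\vert_{\Gamma'}\vert$. It is continuous, being induced by $\varphi\mapsto\varphi\vert_{\Gamma'}$ on $R(D)$ followed by $\ddiv$, both continuous for the metric topology given by $\|\ddiv(f)\|_\infty=\max f-\min f$ (restriction to $\Gamma'$ does not increase this quantity). Since $\vert\Sigma\vert$ is closed in the compact space $\vert D\vert$, it is compact, so $\vert\Sigma\vert_{\Gamma'}\vert=\rho(\vert\Sigma\vert)$ is compact and hence closed in $\vert D'\vert$. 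Moreover $\rho$ is piecewise-affine: after refining the model of $\Gamma$ so that $\Gamma'$ is a union of vertices and edges and $\supp(D)$ consists of vertices, one has $D'=D$ on the interiors of edges of $\Gamma'$, so on each cell of the decomposition of $\vert D\vert$ from Section~\ref{sec:modules} the map $\rho$ forgets the edge coordinates not lying in $\Gamma'$, keeps the remaining ones, and shifts the vertex multiplicities by constants determined by the (locally constant) slope data of the cell. Thus $\vert\Sigma\vert_{\Gamma'}\vert$ is the image of a definable set under a piecewise-affine map, so it is definable by elimination of quantifiers \cite{Marker02}, exactly as in the proof of Lemma~\ref{Lem:FGClosed}.

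For property (5), let $\mathcal{V}$ be the rank $r+1$ valuated matroid on $\Sigma$ provided by property (5) for $\Sigma$. The restriction homomorphism $\mathrm{res}\colon\Sigma\to\Sigma\vert_{\Gamma'}$ is surjective but generally not injective, so I would transport $\mathcal{V}$ along a section. Fix a tangent vector $\zeta$ on $\Gamma'$; by Lemma~\ref{Lem:Slopes} there are $\bar\varphi_0,\dots,\bar\varphi_r\in\Sigma\vert_{\Gamma'}$ with pairwise distinct slopes along $\zeta$, and these are tropically independent (as in the proof of Lemma~\ref{Lem:Slopes}). Choose lifts $\varphi_i\in\Sigma$ with $\varphi_i\vert_{\Gamma'}=\bar\varphi_i$. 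Then $\{\varphi_0,\dots,\varphi_r\}$ is a basis of $\mathcal{V}$: otherwise some subset is the support of a valuated circuit $V\in\mathcal{V}$, whence $\min_{\varphi\in\Sigma}\{\varphi+V(\varphi)\}$ is a tropical dependence on $\Gamma$ by property (5) for $\mathcal{V}$, and restricting it to $\Gamma'$ gives a tropical dependence among a subset of $\{\bar\varphi_0,\dots,\bar\varphi_r\}$, and hence among all of them (enlarge the combination by giving the remaining functions very large coefficients), contradicting their independence. Now pick $\Sigma_0\subseteq\Sigma$ consisting of one lift of each element of $\Sigma\vert_{\Gamma'}$, with $\varphi_i$ chosen as the lift of $\bar\varphi_i$, so $\mathrm{res}\vert_{\Sigma_0}$ is a bijection and $\Sigma_0$ contains the basis $\{\varphi_0,\dots,\varphi_r\}$. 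The valuated circuits of $\mathcal{V}$ supported in $\Sigma_0$ form the restriction $\mathcal{V}\vert_{\Sigma_0}$, a valuated matroid of rank $r+1$ (the circuit axioms are inherited; every $(r+2)$-subset of $\Sigma_0$ contains a circuit support of $\mathcal{V}$, which lies in $\Sigma_0$; the rank is $r+1$ because $\Sigma_0$ contains a basis). Transporting $\mathcal{V}\vert_{\Sigma_0}$ along the bijection $\mathrm{res}\vert_{\Sigma_0}$ gives a rank $r+1$ valuated matroid $\mathcal{V}'$ on $\Sigma\vert_{\Gamma'}$. Finally, if $\bar V\in\mathcal{V}'$ is a valuated circuit, coming from $V\in\mathcal{V}$ with $\supp(V)\subseteq\Sigma_0$, then $\min_{\bar\varphi\in\Sigma\vert_{\Gamma'}}\{\bar\varphi+\bar V(\bar\varphi)\}=\min_{\varphi\in\supp(V)}\{\varphi\vert_{\Gamma'}+V(\varphi)\}$ is the restriction to $\Gamma'$ of the tropical dependence $\min_{\varphi\in\Sigma}\{\varphi+V(\varphi)\}$ on $\Gamma$, hence a tropical dependence on $\Gamma'$ since the minimum is still attained at least twice at every point of $\Gamma'\subseteq\Gamma$. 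This establishes property (5), and with it the lemma.

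The step I expect to be the main obstacle is property (5): because $\Sigma\to\Sigma\vert_{\Gamma'}$ is not injective there is no canonical pushforward of the valuated matroid, and one must choose a section whose image contains a basis of $\mathcal{V}$ — this is precisely where Lemma~\ref{Lem:Slopes} and the independence of functions with distinct slopes are used. Once such a section is fixed, the remaining verifications reduce to the elementary fact that restriction to a subgraph preserves tropical linear combinations and tropical dependences.
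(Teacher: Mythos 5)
Your proposal is correct, but it is substantially more detailed than the paper's proof and differs on both remaining conditions. For condition (4), the paper simply observes that $\vert \Sigma\vert_{\Gamma'} \vert$ is obtained from the closed, definable set $\vert \Sigma \vert$ by intersecting with $\Sym^{d'}(\Gamma')$, and that closedness and definability are preserved under intersection; you instead realize $\vert \Sigma\vert_{\Gamma'}\vert$ as the image of the compact definable set $\vert\Sigma\vert$ under a continuous, piecewise-affine restriction map and invoke quantifier elimination, exactly as in Lemma~\ref{Lem:FGClosed}. Your route is arguably more robust, since it does not require identifying $\vert\Sigma\vert_{\Gamma'}\vert$ with a literal intersection inside $\Sym^d(\Gamma)$. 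For condition (5), the paper's entire justification is that the restriction of a dependence on $\Gamma$ to $\Gamma'$ is a dependence on $\Gamma'$; it leaves implicit how the valuated matroid on $\Sigma$ induces one on $\Sigma\vert_{\Gamma'}$, which is exactly the point you identify as nontrivial (the restriction map $\Sigma \to \Sigma\vert_{\Gamma'}$ is not injective). Your construction --- choosing a section of the restriction map whose image contains a basis of $\mathcal{V}$, using Lemma~\ref{Lem:Slopes} and the independence of functions with distinct slopes along a tangent vector to certify that the chosen lifts form a basis, restricting $\mathcal{V}$ to that section, and transporting along the resulting bijection --- supplies precisely the detail the paper elides, and your verifications (circuit elimination stays supported in $\Sigma_0$, rank is preserved because $\Sigma_0$ contains a basis, restricted circuits remain dependences because values at points of $\Gamma'$ are unchanged) are all sound. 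In short: same skeleton, but you fill a real gap in the published argument for property (5).
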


\begin{proof}
Condition (4) holds because the intersection of a closed subset with $\Sym^{d'} (\Gamma') \subseteq \Sym^d (\Gamma)$ is a closed subset of $\Sym^{d'} (\Gamma')$ and the intersection of definable subsets is definable.  Condition (5) holds because the restriction of a dependence on $\Gamma$ to $\Gamma'$ is a dependence on $\Gamma'$.
\end{proof}

\subsection{Dimension of tropical linear series} 
\label{Sec:Dim}

As explained in Example~\ref{Ex:Lollipop} above, most graphs $\Gamma$ have divisors $D$ such that the polyhedral set $|R(D)|$, parametrizing all effective divisors equivalent to $D$, has dimension strictly greater than the rank of $D$.  In such cases $R(D)$ is not the tropicalization of a linear series.  Here we discuss partial results regarding the dimensions of tropical linear series.

\begin{lemma}
\label{Lem:LowerDim}
If $\Sigma \subseteq R(D)$ is a tropical linear series of rank $r$, then $\dim (\vert \Sigma \vert) \geq r$.\end{lemma}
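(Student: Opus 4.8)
The plan is to use property~(1) on a single carefully chosen edge to produce a large family of divisors in $|\Sigma|$, and then to bound $\dim|\Sigma|$ from below by pushing this family forward along a piecewise-affine restriction map on the polyhedral complex $|D| = |R(D)|$.

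Arguing as in the proof of Lemma~\ref{Lem:Slopes}, I would first fix a model of $\Gamma$ and an edge $e$ such that $\supp(D)$ misses $\mathrm{int}(e)$ and $s_\zeta(\Sigma)$ equals a fixed sequence $(s[0] < \cdots < s[r])$ (dropping the irrelevant subscript $\zeta$) for every tangent vector $\zeta$ pointing in a fixed direction along $\mathrm{int}(e)$. Given any $r$ distinct interior points $x_1 < \cdots < x_r$ of $e$, property~(1) provides $\varphi \in \Sigma$ with $\ddiv(\varphi) + D \geq x_1 + \cdots + x_r$. Since $\varphi \in R(D)$ and $D$ vanishes on $\mathrm{int}(e)$, the restriction $\varphi|_{\mathrm{int}(e)}$ is concave; its slopes lie in the $(r{+}1)$-element set $\{s[0],\dots,s[r]\}$, so it has at most $r$ breakpoints; and $\ord_{x_i}(\varphi) \geq 1$ forces each $x_i$ to be a breakpoint. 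Hence the breakpoints are precisely $x_1,\dots,x_r$, the slope immediately to the left of $x_i$ is $s[r-i+1]$, and $\ord_{x_i}(\varphi) = c_i := s[r-i+1] - s[r-i] \geq 1$; in particular $\bigl(\ddiv(\varphi)+D\bigr)|_{\mathrm{int}(e)} = \sum_{i=1}^r c_i x_i$ depends only on $(x_1,\dots,x_r)$, not on the chosen $\varphi$. This slope count is a mild refinement of the one in Lemma~\ref{Lem:Slopes}, so I do not expect it to cause difficulty.

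Let $r_e$ be the map sending a divisor on $\Gamma$ to its restriction to $\mathrm{int}(e)$. On each cell of $|D|$ this map simply remembers the coordinates that record the points lying on $e$, so it is affine-linear on each cell and sends polyhedra to polyhedra of no larger dimension. Let $Z \subseteq |\Sigma|$ be the set of divisors $\ddiv(\varphi) + D$ obtained above as $(x_1,\dots,x_r)$ ranges over the open simplex $U := \Sym^r(\mathrm{int}(e))$; by the previous paragraph $r_e(Z) = W := \{\sum_i c_i x_i : (x_1,\dots,x_r) \in U\}$, which is an $r$-dimensional (relatively open) polyhedron inside the open simplex $\Sym^{c_1+\cdots+c_r}(\mathrm{int}(e))$. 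If $\dim|\Sigma|$ were less than $r$, then for each of the finitely many cells $\sigma$ of $|D|$ the set $|\Sigma| \cap \sigma$ would lie in a finite union of polyhedra of dimension $< r$, and applying $r_e$ cell by cell would exhibit $W = r_e(Z)$ inside a finite union of polyhedra of dimension $< r$ --- impossible for the $r$-dimensional polyhedron $W$. Hence $\dim|\Sigma| \geq r$. The crux is this last step: property~(1) yields $\varphi$ for each $E$ with no continuity in $E$, so $U$ does not obviously embed in $|\Sigma|$ and a bare set-theoretic surjection onto an $r$-dimensional set would not be enough; it is the push-forward along the piecewise-affine map $r_e$, which controls the image of even a discontinuous family once $|\Sigma|$ is assumed low-dimensional, that makes the count go through.
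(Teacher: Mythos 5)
Your slope analysis on the edge $e$ is correct and is indeed a refinement of Lemma~\ref{Lem:Slopes}: for each tuple $x_1 < \cdots < x_r$ in $\mathrm{int}(e)$, property (1) produces $\varphi \in \Sigma$ with $(\ddiv(\varphi)+D)\vert_{\mathrm{int}(e)} = \sum_i c_i x_i$, and the restriction map $r_e$ is affine on each cell of $\vert D \vert$. The gap is in the last step. From ``$\dim\vert\Sigma\vert < r$'' you conclude that $\vert\Sigma\vert \cap \sigma$ lies in a finite union of polyhedra of dimension $< r$ for each cell $\sigma$. That implication is precisely what fails for an arbitrary subset of a polyhedral complex: a set of topological (or Hausdorff) dimension $< r$ need not admit any finite --- or even countable --- cover by polyhedra of dimension $< r$ unless it is definable, and definability of $\vert\Sigma\vert$ is property (4) of a \emph{strong} tropical linear series, not a hypothesis here. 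Concretely, the set $Z$ you build is the ``graph'' of a possibly wildly discontinuous assignment $(x_1,\dots,x_r) \mapsto D + \ddiv(\varphi)$; such a graph can be totally disconnected, hence of topological dimension $0$, while still surjecting onto the $r$-dimensional polyhedron $W$ under the affine map $r_e$ (compare the graph of a discontinuous function $[0,1]\to[0,1]$, which projects onto $[0,1]$). A measure-theoretic repair is possible --- piecewise-Lipschitz maps increase $r$-dimensional Hausdorff measure by at most a constant factor, so $\mathcal{H}^r(W)>0$ forces $\mathcal{H}^r(Z)>0$ and hence $\dim_H\vert\Sigma\vert\geq r$ --- but this does not give topological dimension $\geq r$, and the justification as written is not valid.

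The missing idea is to use the tropical module structure of $\Sigma$, not just property (1), which as you observe carries no continuity. The paper picks $\varphi_0,\dots,\varphi_r\in\Sigma$ with $s_\zeta(\varphi_i)=s_\zeta[i]$, an interval $I$ on which each $\varphi_i$ has constant slope, and coefficients $(b_0,\dots,b_r)$ so that in $\vartheta=\min\{b_i+\varphi_i\}$ each term achieves the minimum on an open subinterval of $I$. Since $\Sigma$ is closed under tropical linear combinations, $\min\{a_i+\varphi_i\}$ lies in $\Sigma$ for every $(a_0,\dots,a_r)$ near $(b_0,\dots,b_r)$, and perturbing the $a_i$ moves the $r$ breakpoints in $I$ independently, so the resulting map from a small ball in $\mathbb{R}^{r+1}/(1,\dots,1)$ to $\vert\Sigma\vert$ is continuous and injective. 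This embeds an open $r$-ball in $\vert\Sigma\vert$ and gives local dimension at least $r$ at $D+\ddiv(\vartheta)$, with no need for the pushforward device: the module closure supplies exactly the continuous family whose absence you were working around.
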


\begin{proof}
We exhibit a divisor $D' \in \vert \Sigma \vert$ such that the local dimension of $\vert \Sigma \vert$ in a neighborhood of $D'$ is at least $r$.  Let $\zeta$ be a tangent vector and let $\varphi_i \in \Sigma$ satisfy $s_{\zeta} (\varphi_i) = s_{\zeta} [i]$ for $0 \leq i \leq r$.  Let $I \subseteq \Gamma$ be a half-open interval containing $\zeta$ on which all of the functions $\varphi_i$ have constant slope, and let $\vartheta = \min \{ b_i + \varphi_i \}$ be a tropical linear combination with the property that each function $\varphi_i$ obtains the minimum on an open subinterval of $I$.  Letting $U \subset \mathbb{R}^{r+1}$ be a sufficiently small open ball around the vector $(b_0 , \ldots , b_r)$, we see that the map from $\Phi : U/(1,\ldots,1) \to \vert \Sigma \vert$ given by 
\[
\Phi (a_0 , \ldots , a_r ) = D + \ddiv \left( \min \{ a_0 + \varphi_0, \cdots, a_r + \varphi_r \} \right)
\]
is smooth and injective.  It follows that $\vert \Sigma \vert$ has dimension at least $r$ in a neighborhood of $D' = D + \ddiv (\vartheta)$.
\end{proof}

In order to prove a partial converse, we first establish a property of generating sets of tropical linear series.

\begin{lemma} \label{Lem:Submodules}
Let $\Sigma$ be a tropical linear series of rank $r$, and let $S \subseteq \Sigma$ be a generating set.  For every function $\varphi \in \Sigma$, there exists a subset $T \subseteq S$ of size $\vert T \vert \leq r+1$ such that $\varphi$ is contained in the submodule generated by $T$.
\end{lemma}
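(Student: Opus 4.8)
The plan is to show that a \emph{minimal-length} expression of $\varphi$ as a tropical linear combination of elements of $S$ automatically involves at most $r+1$ of them, by recognizing such an expression as a certificate of independence. First I would use the hypothesis $\langle S\rangle = \Sigma$ to write $\varphi = \min\{a_i + \psi_i : i \in I\}$ for some finite index set $I$, scalars $a_i \in \mathbb{R}$, and functions $\psi_i \in S$; after discarding all but the term with smallest coefficient among any repeated $\psi_i$, I may assume the $\psi_i$ are pairwise distinct. Among all such representations I would choose one with $|I|$ minimal, and set $T := \{\psi_i : i \in I\} \subseteq S$. By construction $\varphi \in \langle T\rangle$, so it remains only to prove $|I| \leq r+1$.

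Next I would observe that minimality forces irredundancy: if $\varphi = \min_{j \neq i}\{a_j + \psi_j\}$ for some $i \in I$, then omitting the $i$-th term gives a shorter representation by pairwise-distinct elements of $S$, contradicting minimality. Hence for each $i \in I$ there is a point $x_i \in \Gamma$ with $a_i + \psi_i(x_i) < a_j + \psi_j(x_i)$ for all $j \neq i$; that is, the summand $a_i + \psi_i$ achieves the minimum in the expression for $\varphi$ uniquely at $x_i$. By definition this says precisely that $\varphi$, together with the tuple $(a_i)_{i \in I}$, is a certificate of independence for the set $\{\psi_i : i \in I\}$.

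Finally I would apply the first Lemma attributed to \cite[Theorem~1.6]{M23} quoted above: the existence of a certificate of independence implies that $\{\psi_i : i \in I\}$ is tropically independent. Since this is a subset of $\Sigma$ and property (2) of Definition~\ref{Def:TLS} guarantees that every set of $r+2$ functions in $\Sigma$ is tropically dependent, any independent subset of $\Sigma$ has at most $r+1$ elements. Therefore $|T| = |I| \leq r+1$, completing the proof. I do not expect a genuine obstacle here; the only points requiring care are that $\langle S\rangle$ consists of \emph{finite} tropical linear combinations (so that ``minimal length'' makes sense) and the bookkeeping ensuring the $\psi_i$ are distinct (so that omitting a redundant term is legitimate). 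The conceptual content is entirely in the identification of an irredundant generating expression with a certificate of independence of the generators it uses.
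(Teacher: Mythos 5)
Your proof is correct and follows essentially the same route as the paper: both take a minimal representation of $\varphi$ as a tropical linear combination of elements of $S$ (the paper minimizes the subset $T$ with $\varphi \in \langle T\rangle$, you minimize the length of the expression, which amounts to the same thing), observe that minimality forces each summand to achieve the minimum uniquely somewhere so that the expression is a certificate of independence, and then invoke property (2) of Definition~\ref{Def:TLS} to bound the size by $r+1$.
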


\begin{proof}
Let $\varphi \in \Sigma$ and let $T \subseteq S$ be minimal such that $\varphi$ is contained in the submodule generated by $T$.  Then there exist coefficients $a_{\psi} \in \mathbb{R}$ such that $\varphi = \min \{ a_{\psi} + \psi \vert \psi \in T \}$.  Because $T$ is minimal, for every $\psi \in T$, there is a point $v \in \Gamma$ such that $\psi$ achieves the minimum uniquely at $v$.  In other words, $\min \{ a_{\psi} + \psi \vert \psi \in T \}$ is a certificate of independence for $T$.  Since every set of $r+2$ functions in $\Sigma$ is tropically dependent, it follows that $\vert T \vert \leq r+1$.
\end{proof}

\begin{corollary}
\label{Cor:UpperDim}
Let $\Sigma \subset R(D)$ be a tropical linear series of rank $r$.  If $\Sigma$ is finitely generated, then $\mathrm{dim} \vert \Sigma \vert \leq r$.
\end{corollary}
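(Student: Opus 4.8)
The plan is to combine Lemma~\ref{Lem:Submodules} with the elementary fact that a piecewise-affine map cannot raise dimension. Fix a finite generating set $S = \{ \varphi_0 , \ldots , \varphi_s \}$ for $\Sigma$. By Lemma~\ref{Lem:Submodules}, every $\varphi \in \Sigma$ lies in $\langle T \rangle$ for some $T \subseteq S$ with $\vert T \vert \leq r+1$; conversely, each such $\langle T \rangle$ is contained in $\Sigma$ because $T \subseteq \Sigma$ and $\Sigma$ is a tropical submodule. Hence $\Sigma = \bigcup_T \langle T \rangle$, a finite union over the finitely many subsets $T \subseteq S$ of size at most $r+1$, and therefore $\vert \Sigma \vert = \bigcup_T \vert \langle T \rangle \vert$.

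It then suffices to bound $\dim \vert \langle T \rangle \vert$ for each such $T$. Writing $T = \{ \psi_1 , \ldots , \psi_k \}$ with $k \leq r+1$, the set $\vert \langle T \rangle \vert$ is the image of the map $\mathbb{R}^k \to \vert D \vert$ sending $(a_1 , \ldots , a_k)$ to $D + \ddiv( \min \{ a_1 + \psi_1 , \ldots , a_k + \psi_k \} )$. Adding a common constant to all coordinates translates the tropical linear combination by that constant, and hence does not change its divisor, so this map factors through the quotient $\mathbb{R}^k / \mathbb{R}(1, \ldots, 1) \cong \mathbb{R}^{k-1}$. On each region of this quotient where the combinatorial type of the pointwise minimum is constant the map is affine, so it is piecewise-affine; the image of a piecewise-affine map is a finite union of affine images of polyhedra, each of dimension at most that of its domain. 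Therefore $\dim \vert \langle T \rangle \vert \leq k-1 \leq r$.

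Since $\vert \Sigma \vert$ is a finite union of the sets $\vert \langle T \rangle \vert$, each of dimension at most $r$, we conclude $\dim \vert \Sigma \vert \leq r$. Given Lemma~\ref{Lem:Submodules} the argument is short, so there is no serious obstacle here; the only point requiring care is the standard fact that piecewise-affine (as opposed to merely continuous) maps do not raise dimension, which distinguishes this situation from the continuous surjection used in Lemma~\ref{Lem:FGClosed}. If one prefers, this can instead be extracted from definability: the image of the piecewise-affine map above is a definable set by quantifier elimination for divisible ordered abelian groups (as in the proof of Lemma~\ref{Lem:FGClosed}), and the dimension of a definable set is bounded by the dimension of any definable set that surjects onto it.
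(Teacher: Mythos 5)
Your proof is correct and follows essentially the same route as the paper: decompose $\Sigma$ via Lemma~\ref{Lem:Submodules} into finitely many submodules $\langle T \rangle$ with $\vert T \vert \leq r+1$, and observe that each $\vert \langle T \rangle \vert$ is the image of a piecewise-affine map factoring through $\mathbb{R}^{\vert T \vert}/\mathbb{R}(1,\ldots,1)$, hence has dimension at most $r$. The paper's additional restriction to a compact cube is not needed for the dimension bound, and your explicit remark that piecewise-affine (rather than merely continuous) surjections are what control dimension is a worthwhile clarification.
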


\begin{proof}
By Lemma~\ref{Lem:Submodules},
\[
\Sigma = \bigcup_{T \subseteq S, \vert T \vert = r+1} \langle T \rangle .
\]
Let $T = \{ \varphi_0 , \ldots , \varphi_r \} \subseteq S$ be a subset of size $\vert T \vert = r+1$, and consider the map $\Phi : \mathbb{R}^{r+1} \to \vert \langle T \rangle \vert$ given by
\[
\Phi (a_0 , \ldots , a_r ) = D + \ddiv \left( \min \{ a_0 + \varphi_0, \cdots, a_r + \varphi_r \} \right) .
\]
Note that $\Phi$ is constant along the diagonal $a_0 = \cdots = a_r$, so $\Phi$ factors through $\mathbb{R}^{r+1} / (1,\ldots, 1)$. Because $\Gamma$ is compact, there is a positive integer $M$ such that $M + \varphi_i > \min_{j \neq i} \{ \varphi_j \}$ for all $i$.  It follows that the restriction of $\Phi$ to the cube $[0,M]^{r+1}$ is surjective.  This is a piecewise linear map, so $\mathrm{dim} \vert \langle T \rangle \vert \leq r$.  It follows that $\vert \Sigma \vert$ is a union of finitely many sets of dimension at most $r$, hence it has dimension at most $r$ as well.
\end{proof}

Note that Lemma~\ref{Lem:LowerDim} and Corollary~\ref{Cor:UpperDim} concern the \emph{global} dimension of $\vert \Sigma \vert$.  Even when $\Sigma$ is finitely generated, we do not yet know whether $\vert \Sigma \vert$ must be equidimensional.  It is a priori possible for there to exist a divisor $D' \in \vert \Sigma \vert$ such that the local dimension of $\vert \Sigma \vert$ in a neighborhood of $D'$ is smaller than $r$, but we do not know of any such examples.

\section{Rank One Tropical Linear Series}
\label{Sec:FG}

\subsection{Finite Generation}

In this section, we prove Theorem~\ref{Thm:Rank1} and show that every tropical linear series of rank 1 is a strong tropical linear series.

Given a tropical linear series $\Sigma$ on a graph $\Gamma$, we begin by defining a subdivision of $\Gamma$ for which the slope vectors are constant on edges of the subdivision.

\begin{lemma}
\label{Lem:Subdivision}
Given a tropical linear series $\Sigma \subseteq R(D)$ on a metric graph $\Gamma$, there exists a finite set $V \subset \Gamma$ such that:
\begin{enumerate}
\item  $V$ contains $\mathrm{Supp} (D)$ and all points of valence different from 2, and
\item  the slope vector $s_{\zeta} (\Sigma)$ is constant on each (oriented) edge of $\Gamma \smallsetminus V$.
\end{enumerate}
\end{lemma}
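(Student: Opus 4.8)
\emph{Proof proposal.} The plan is to build $V$ in two stages: a geometric part $V_1$ coming from a finite generating set of $R(D)$, and then finitely many extra points accounting for the loci where the slope vector of $\Sigma$ jumps, whose finiteness I will get from a monotonicity statement proved by induction on the rank.

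First I would invoke \cite{HMY12} to fix a finite generating set $\varphi_0,\dots,\varphi_s$ of $R(D)$, and let $V_1\subset\Gamma$ be the finite set consisting of $\supp(D)$, all points of valence different from $2$, and all points at which some $\varphi_i$ is nonlinear. Fix an edge $e$ of $\Gamma\smallsetminus V_1$, an orientation, and a length parameter $t$ on its interior, with $\zeta_t$ the forward tangent vector at $t$. Each $\varphi_i|_e$ is affine; since every $\varphi\in R(D)$ is a tropical linear combination of the $\varphi_i$, the restriction $\varphi|_e$ is a pointwise minimum of finitely many affine functions of $t$, hence concave, and $s_{\zeta_t}(\varphi)$ lies in the fixed finite set of slopes of the functions $\varphi_i|_e$. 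In particular $s_{\zeta_t}(\Sigma)$ is, for every $t$, a subset of this fixed finite set, and each $s_{\zeta_t}[j]$ takes values in it.

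The heart of the argument is the following claim, to be proved by induction on $i$: for every tropical linear series $\Upsilon\subseteq R(D)$ of rank $i$ and every oriented edge $e$ of $\Gamma\smallsetminus V_1$, each entry of $s_{\zeta_t}(\Upsilon)$ is a non-increasing function of $t$ on the interior of $e$. When $i=0$ this is immediate: $\Upsilon$ is principal, generated by a single $\varphi\in R(D)$, and $s_{\zeta_t}(\Upsilon)=\{s_{\zeta_t}(\varphi)\}$ is the right-derivative of the concave function $\varphi|_e$. For $i\ge 1$, the largest entry $\max s_{\zeta_t}(\Upsilon)$ is handled directly: if $\varphi\in\Upsilon$ attains the maximal slope at $t_0$, then concavity of $\varphi|_e$ gives $s_{\zeta_{t_1}}(\varphi)\ge s_{\zeta_{t_0}}(\varphi)$ whenever $t_1<t_0$, so the maximum does not increase as $t$ increases. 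For a smaller entry, say the $(j+1)$-st smallest with $j<i$, and a point $t_1$, I would apply property (3) of Definition~\ref{Def:TLS} at $\zeta_{t_1}$ to obtain a rank-$j$ tropical linear series $\Upsilon'\subseteq\Upsilon$ all of whose functions have slope along $\zeta_{t_1}$ at most the $(j+1)$-st smallest slope realized by $\Upsilon$ there; by Lemma~\ref{Lem:Slopes}, $s_{\zeta_{t_1}}(\Upsilon')$ has exactly $j+1$ entries, so it coincides with the $j+1$ smallest entries of $s_{\zeta_{t_1}}(\Upsilon)$. Since $\Upsilon'$ has rank $j<i$, the inductive hypothesis gives that $\max s_{\zeta_t}(\Upsilon')$ is non-increasing, so for $t_0>t_1$ it is at most the $(j+1)$-st smallest entry of $s_{\zeta_{t_1}}(\Upsilon)$. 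As $s_{\zeta_{t_0}}(\Upsilon')\subseteq s_{\zeta_{t_0}}(\Upsilon)$ consists of $j+1$ values, all at most $\max s_{\zeta_{t_0}}(\Upsilon')$, the $(j+1)$-st smallest entry of $s_{\zeta_{t_0}}(\Upsilon)$ is at most $\max s_{\zeta_{t_0}}(\Upsilon')$, hence at most the $(j+1)$-st smallest entry of $s_{\zeta_{t_1}}(\Upsilon)$. This is the required monotonicity.

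Applying the claim to $\Sigma$: on each oriented edge $e$ of $\Gamma\smallsetminus V_1$, every entry of $s_{\zeta_t}(\Sigma)$ is a non-increasing, integer-valued function of $t$ taking values in a fixed finite set, hence is constant outside a finite subset of the interior of $e$. I would then let $V$ be the union of $V_1$ with all of these (finitely many) jump points, over all edges of $\Gamma\smallsetminus V_1$ and both orientations. Then $V$ is finite, contains $\supp(D)$ and all points of valence different from $2$ because $V_1$ does, and by construction $s_{\zeta}(\Sigma)$ is constant on each oriented edge of $\Gamma\smallsetminus V$. The step I expect to be the main obstacle is the monotonicity claim: a naive argument comparing the slopes of a fixed tuple of functions in $\Sigma$ at two points fails because distinct functions can acquire equal slopes (``merge'') as $t$ moves, collapsing the count; the role of property (3) in the induction is precisely to supply, at each point, a lower-rank tropical linear series realizing exactly the bottom slopes of $\Sigma$, so that Lemma~\ref{Lem:Slopes} keeps the count honest.
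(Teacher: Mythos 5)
Your proof is correct, but it takes a genuinely different route from the paper's. The paper argues by compactness: for each tangent vector $\zeta$ it picks functions $\varphi^0_\zeta,\dots,\varphi^r_\zeta\in\Sigma$ realizing the $r+1$ slopes $s_\zeta[0]<\dots<s_\zeta[r]$, takes the half-open interval $I_\zeta$ on which all of them are affine (so that, by Lemma~\ref{Lem:Slopes}, they account for the entire slope vector and hence force $s_\eta(\Sigma)=s_\zeta(\Sigma)$ throughout $I_\zeta$), observes that the sets $U_v=\bigcup_\zeta I_\zeta$ form an open cover of the compact space $\Gamma$, and extracts a finite subcover to produce $V$. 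That argument is a few lines long but gives no control on $|V|$. Your argument instead imports the finite generation of $R(D)$ from \cite{HMY12} to get concavity of every $\varphi|_e$ on edges of an auxiliary subdivision $V_1$, and then proves by induction on the rank --- using property (3) of Definition~\ref{Def:TLS} together with Lemma~\ref{Lem:Slopes} exactly as you describe --- that each entry of $s_{\zeta_t}(\Sigma)$ is monotone along an edge; finiteness of the jump set then follows since the entries are integers in a fixed finite range. Your inductive step is sound: the only entry not directly controlled by concavity of a single function is handled by passing to the rank-$j$ subseries $\Upsilon'$ whose top slope realizes the $(j+1)$-st entry at the chosen basepoint, and your diagnosis of why a naive pointwise comparison fails (slopes can merge) is exactly right. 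What your route buys is an explicit bound on the number of added points per edge (at most $(r+1)$ times the slope range determined by the generators of $R(D)$) and a monotonicity statement for the slope vectors that is of independent structural interest; what it costs is length and reliance on the finite generation of $R(D)$, though the paper already invokes that fact to show $s_\zeta(\Sigma)$ has finite length, so this is not an additional hypothesis in context.
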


\begin{proof}
For each tangent vector $\zeta$ on $\Gamma$, let $\varphi^i_{\zeta} \in \Sigma$ be a function with $s_{\zeta} ( \varphi^i_{\zeta} ) = s_{\zeta} [i]$.  Let $I_{\zeta}$ be the half-open interval on which the functions $\varphi^i_{\zeta}$ have constant slope $s_{\zeta} [i]$ for all $i = 0, \ldots , r$.  For each point $v \in \Gamma$, let $U_v = \cup I_{\zeta}$,
where the union is over all tangent vectors $\zeta$ based at $v$.  Note that $U_v$ is an open set containing $v$.  Since the sets $U_v$ cover the compact space $\Gamma$, there exists a finite set $V$ such that $\{ U_v \vert v \in V \}$ is a cover.  The set $V$ therefore satisfies the second condition above.  Since $\mathrm{Supp} (D)$ is finite and there are only finitely many points of valence other than 2, the result follows.
\end{proof}

Lemma~\ref{Lem:Subdivision} allows us to construct, for each edge in the model, a set of functions in the tropical linear series $\Sigma$ that have specified behavior on that edge.

\begin{lemma}
\label{Lem:GeneratingSet}
Let $\Sigma \subseteq R(D)$ be a linear series on a metric graph $\Gamma$, let $V \subset \Gamma$ be the finite set as in Lemma~\ref{Lem:Subdivision}, and let $G$ be the model for $\Gamma$ induced by the vertex set $V$.  For every (oriented) edge $E$ in $G$ and every nonnegative integer $i \leq r$, there exists a function $\varphi^E_i \in \Sigma$ with $s_{\zeta} (\varphi^E_i) = s_{\zeta}[i]$ for all tangent vectors $\zeta$ in $E$.
\end{lemma}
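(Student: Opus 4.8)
\textit{Proof plan.}
The plan is to construct $\varphi^E_i$ explicitly, using property (3) in Definition~\ref{Def:TLS}, Lemma~\ref{Lem:Slopes}, and the fact that every function of $\Sigma$ is forced to have non‑increasing slope along $E$. No limiting or compactness argument should be needed.

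First I fix the orientation of $E$ and write its closure as $[x,y]$, abbreviating the common slope vector from Lemma~\ref{Lem:Subdivision} as $(s[0] < \cdots < s[r])$. Since the interior of $E$ is disjoint from $\mathrm{Supp}(D)$, we have $\ord_v(\varphi) \geq 0$ for every $\varphi \in \Sigma$ and every interior point $v$ of $E$, so the slope of $\varphi$ in the orientation direction is non‑increasing along $E$, with all values lying in $\{s[0], \ldots, s[r]\}$. The key preliminary step is to identify the slope sets at the two tangent vectors $\zeta_x$ and $\zeta_y$ that are based at the endpoints $x$ and $y$ and point into $E$. For any $\varphi \in \Sigma$, the slope $s_{\zeta_x}(\varphi)$ is the slope of $\varphi$ on the germ of $E$ at $x$, hence lies in $\{s[0],\ldots,s[r]\}$; likewise $s_{\zeta_y}(\varphi)$ lies in $\{-s[r], \ldots, -s[0]\}$. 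Since each of these sets has $r+1$ elements and, by Lemma~\ref{Lem:Slopes}, $s_{\zeta_x}(\Sigma)$ and $s_{\zeta_y}(\Sigma)$ also have exactly $r+1$ elements, we get $s_{\zeta_x}(\Sigma) = \{s[0],\ldots,s[r]\}$ and $s_{\zeta_y}(\Sigma) = \{-s[r],\ldots,-s[0]\}$, and in particular $s_{\zeta_x}[i] = s[i]$.

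For the construction, let $\Upsilon \subseteq \Sigma$ be $\Sigma$ itself when $i = r$, and when $i < r$ let $\Upsilon$ be a rank‑$i$ tropical linear series contained in $\{\varphi \in \Sigma : s_{\zeta_x}(\varphi) \leq s_{\zeta_x}[i]\}$, which exists by property (3). In both cases $\Upsilon$ is a rank‑$i$ tropical linear series with slope vector $(s[0],\ldots,s[i])$ at every tangent vector in $E$: for $i = r$ this is the hypothesis, and for $i < r$ every function of $\Upsilon$ has slope $\leq s[i]$ at $\zeta_x$, hence $\leq s[i]$ throughout $E$ by monotonicity, so the slopes realized by $\Upsilon$ along $E$ lie in $\{s[0],\ldots,s[i]\}$, and there are exactly $i+1$ of them by Lemma~\ref{Lem:Slopes}. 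Applying the argument of the preliminary step to $\Upsilon$ (which is contained in $R(D)$), we get $s_{\zeta_y}(\Upsilon) = \{-s[i],\ldots,-s[0]\}$, so there is $\varphi \in \Upsilon$ with $s_{\zeta_y}(\varphi) = -s[i]$; this $\varphi$ has slope $s[i]$ on the germ of $E$ at $y$, and since its slopes along $E$ are non‑increasing and bounded above by $s[i]$ (the latter because $\varphi \in \Upsilon$), it has slope $s[i]$ at every tangent vector in $E$. As $\varphi \in \Upsilon \subseteq \Sigma$, we take $\varphi^E_i = \varphi$.

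The step I expect to be the main obstacle is the identification of the boundary slope sets $s_{\zeta_x}(\Sigma)$ and $s_{\zeta_y}(\Sigma)$. This is what legitimizes extracting functions realizing a prescribed slope \emph{at the endpoints} of $E$, and, because slopes are monotone along $E$, it is precisely realizing the slope at an endpoint — rather than at a nearby interior point, which would only control a proper subinterval — that forces the correct slope across the entire open edge. One should also double check that the rank‑$i$ series $\Upsilon$ produced by property (3) is genuinely contained in $R(D)$, so that the monotonicity reasoning applies inside $\Upsilon$ as well.
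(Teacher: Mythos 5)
Your proposal is correct and follows essentially the same route as the paper: apply property (3) at one endpoint tangent vector to obtain a rank-$i$ subseries $\Upsilon$ with slopes bounded above by $s[i]$, use Lemma~\ref{Lem:Slopes} together with a counting argument to find a function of $\Upsilon$ realizing slope $s[i]$ at the other end, and conclude by monotonicity of slopes along an edge disjoint from $\mathrm{Supp}(D)$. Your preliminary identification of the endpoint slope sets just makes explicit a step the paper compresses into the equality $s_{\zeta'}[i]=s_{\zeta}[i]$.
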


\begin{proof}
Let $\zeta$ and $\zeta'$ denote the tangent vectors at the head and tail, respectively, of the edge $E$.  By definition, the set
$\{ \varphi \in \Sigma \mbox{ } \vert \mbox{ } s_{\zeta} (\varphi) \leq s_{\zeta} [i] \}$
contains a tropical linear series $\Upsilon$ of rank $i$.  By Lemma~\ref{Lem:Slopes}, there are exactly $i+1$ slopes $s_{\zeta'} (\varphi)$, as $\varphi$ ranges over $\Upsilon$.  These $i+1$ slopes are a subset of the $r+1$ slopes $s_{\zeta'} (\varphi)$, as $\varphi$ ranges over $\Sigma$.  It follows that at least one of these slopes must be greater than or equal to $s_{\zeta'} [i]$.  Let $\varphi^E_i \in \Upsilon$ be a function with
\[
s_{\zeta'} (\varphi^E_i) \geq s_{\zeta'} [i] = s_{\zeta} [i] .
\]
Since $E$ does not intersect $\mathrm{Supp}(D)$, the slope of $\varphi^E_i$ cannot increase on this interval.  Since $\varphi^E_i \in \Upsilon$, we have $s_{\zeta} (\varphi^E_i) \leq s_{\zeta} [i]$, and it follows that $s_{\eta} (\varphi^E_i) = s_{\zeta} [i]$ for all tangent vectors $\eta$ in $E$.
\end{proof}

Note that Lemmas~\ref{Lem:Subdivision} and~\ref{Lem:GeneratingSet} hold for tropical linear series of arbitrary rank.  
When the tropical linear series has rank 1, we have the following.

\begin{proposition}
\label{Prop:FiniteVertexSet}
Let $\Sigma \subseteq R(D)$ be a tropical linear series of rank 1.  Then there exists a finite set $W \subseteq \Gamma$ such that, for all $v \notin W$, there is a unique divisor $D' \in \vert \Sigma \vert$ such that $v \in \mathrm{Supp} (D')$.  Moreover, if $\ddiv (\psi) + D = D'$ is this unique divisor, and $v$ is contained in the edge $E$ of the model $G$, then $\psi \in \langle \varphi^E_0, \varphi^E_1 \rangle$.
\end{proposition}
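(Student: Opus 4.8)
The plan is to apply Lemma~\ref{Lem:Subdivision} and then study $\Sigma$ one edge at a time, reducing the proposition to a single membership claim. Let $V \subseteq \Gamma$ be the finite set from Lemma~\ref{Lem:Subdivision}, with induced model $G$. For an oriented edge $E$ of $G$, Lemmas~\ref{Lem:Slopes} and~\ref{Lem:Subdivision} give exactly two slopes $s^E_0 < s^E_1$, constant along $E$, and Lemma~\ref{Lem:GeneratingSet} supplies $\varphi^E_0, \varphi^E_1 \in \Sigma$ with $s_\zeta(\varphi^E_i) = s^E_i$ for every tangent vector $\zeta$ in $E$. Since $\varphi^E_0$ and $\varphi^E_1$ have distinct slopes along $E$ they are tropically independent, so $\langle \varphi^E_0, \varphi^E_1\rangle$ is a pencil; for $v \in \mathrm{int}(E)$ let $\chi_v = \min\{\varphi^E_0 + a_0, \varphi^E_1 + a_1\}$ be the member of this pencil whose single break lies at $v$, and put $D^E_v := \ddiv(\chi_v) + D$, so that $\mathrm{Supp}(D^E_v) \cap \mathrm{int}(E) = \{v\}$ and $v \mapsto D^E_v$ is injective on $\mathrm{int}(E)$. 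Note $D^E_v \in |\Sigma|$ contains $v$, so existence of a divisor through $v$ is automatic. I claim the rest of the proposition is equivalent to: there is a finite $W \supseteq V$ such that for every $v \in \mathrm{int}(E) \smallsetminus W$ and every $\psi \in \Sigma$ with $v \in \mathrm{Supp}(\ddiv(\psi) + D)$ one has $\psi \in \langle \varphi^E_0, \varphi^E_1\rangle$. Indeed such a $\psi$ then has a break on $\mathrm{int}(E)$ (as $\mathrm{Supp}(D) \cap \mathrm{int}(E) = \emptyset$), forced to be at $v$, which pins it to $\chi_v$ up to a scalar; hence $\ddiv(\psi) + D = D^E_v$, giving uniqueness and the stated form of the generating function.

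So fix $v \in \mathrm{int}(E)$ and $\psi \in \Sigma$ with $v \in \mathrm{Supp}(\ddiv(\psi) + D)$. First I record the \emph{forced profile}. Because $\mathrm{Supp}(D)$ misses $\mathrm{int}(E)$, the divisor $\ddiv(\psi)$ is effective on $\mathrm{int}(E)$, so $\psi$ is concave there and its slope is nonincreasing; taking only the values $s^E_0, s^E_1$ and strictly decreasing at $v$, it must equal $s^E_1$ on the tail side of $v$ and $s^E_0$ on the head side, with $\ord_v(\psi) = s^E_1 - s^E_0$. Thus $\psi|_E = \chi_v|_E$ up to an additive constant; normalize $\psi$ so that $\psi = \chi_v$ on $E$. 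Now use that $\Sigma$ has rank $1$: the triple $\{\psi, \varphi^E_0, \varphi^E_1\}$ is tropically dependent, so there is a dependence $\vartheta = \min\{\psi + \alpha,\ \varphi^E_0 + \beta_0,\ \varphi^E_1 + \beta_1\}$. On $\mathrm{int}(E)$ the functions $\varphi^E_0 + \beta_0$ and $\varphi^E_1 + \beta_1$ are affine with distinct slopes, hence equal at most at one point; at every other point of $\mathrm{int}(E)$ at most one of them attains $\vartheta$, so $\psi + \alpha$ must attain it, and by continuity $\psi + \alpha = \vartheta$ on all of $\overline{E}$. Since also $\vartheta = \min\{\varphi^E_0 + \beta_0, \varphi^E_1 + \beta_1\}$ lies in the pencil, we conclude that $\psi$ agrees on $E$, up to a scalar, with the pencil element $\vartheta - \alpha$.

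It remains to upgrade this agreement on $E$ to agreement on all of $\Gamma$, and this is the main obstacle; it is exactly where the finite exceptional set $W$ is forced. The plan is to propagate the equality $\psi = \vartheta - \alpha$ outward edge by edge: on an edge $E'$ abutting the (open) set where $\psi$ and $\vartheta - \alpha$ could still differ, one reruns the dependence argument for $\{\psi, \varphi^{E'}_0, \varphi^{E'}_1\}$, uses the forced profile on $E'$, and pushes the agreement one edge further — \emph{unless} a coincidence of competing affine pieces occurs at a vertex of $G$, equivalently unless the ``stationary'' part of $D^E_v$, namely the restriction of $\ddiv(\chi_v) + D$ to $\Gamma \smallsetminus \mathrm{int}(E)$, which varies piecewise-linearly with $v$, sits in one of finitely many special positions (a point of it landing on a vertex of $G$). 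Each special position is cut out within $\mathrm{int}(E)$ by an affine equation with integer coefficients, so it occurs either on a finite subset of $\mathrm{int}(E)$ or on all of it; the latter cannot happen, since $|\Sigma|$ cannot contain a positive-dimensional family of divisors all through a fixed point — one way to see this is to restrict to $\overline{E}$ via Lemma~\ref{Lem:Restriction}, whence $\Sigma|_{\overline{E}}$ is a rank-$1$ tropical linear series on an interval (or loop), classified as in Example~\ref{Ex:Interval}, together with Lemma~\ref{Lem:Submodules}. Gathering the finitely many bad $v$ over the finitely many edges, together with $V$, produces the required finite set $W$; for $v \notin W$ the propagation succeeds, yielding $\psi \in \langle\varphi^E_0, \varphi^E_1\rangle$ and with it uniqueness of $D'$ and the asserted form of its defining function.
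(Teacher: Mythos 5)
Your first half --- the forced concave profile of $\psi$ on $\mathrm{int}(E)$ and the use of the three-term dependence to conclude that $\psi + \alpha = \vartheta = \min\{\varphi^E_0+\beta_0,\ \varphi^E_1+\beta_1\}$ on $\overline{E}$ --- is correct and is essentially the paper's own argument. The gap is in the second half, which you yourself flag as ``the main obstacle'' and then only sketch. In fact no edge-by-edge propagation is needed: the same dependence already works globally. At every point of $\Gamma$, either $\psi+\alpha$ attains $\vartheta$, in which case (the minimum being attained twice) one of $\varphi^E_0+\beta_0$, $\varphi^E_1+\beta_1$ attains it too and hence $\psi+\alpha=\min\{\varphi^E_0+\beta_0,\varphi^E_1+\beta_1\}$ there; or it does not, in which case both generators attain $\vartheta$ and therefore agree there. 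So if $\psi$ lies outside the pencil, the (nonempty, open) set where $\psi+\alpha$ exceeds $\min\{\varphi^E_0+\beta_0,\varphi^E_1+\beta_1\}$ is one on which $\varphi^E_0-\varphi^E_1$ is constant, equal to $\beta_1-\beta_0=\varphi^E_0(v)-\varphi^E_1(v)$. Since $\varphi^E_0-\varphi^E_1$ has finitely many domains of linearity, it is locally constant on only finitely many maximal open pieces and takes finitely many values there, while $v\mapsto\varphi^E_0(v)-\varphi^E_1(v)$ is injective on $E$ (the slopes along $E$ are distinct); hence only finitely many $v$ per edge are bad. That finite set is exactly the paper's $W_E$.

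Your sketch, by contrast, has two concrete problems. First, your exceptional set is cut out by the wrong condition: you declare $v$ bad when a point of the ``stationary part'' of $D^E_v$ lands on a vertex of $G$, but the actual failure mode is that $\varphi^E_0+\beta_0$ and $\varphi^E_1+\beta_1$ agree on some open subset of $\Gamma$, which can happen in the interior of a distant edge with no point of $D^E_v$ anywhere near a vertex; such $v$ would escape your $W$ while still admitting $\psi\notin\langle\varphi^E_0,\varphi^E_1\rangle$ with $v\in\mathrm{Supp}(\ddiv(\psi)+D)$. Second, your finiteness argument rests on the claim that $\vert\Sigma\vert$ cannot contain a positive-dimensional family of divisors all passing through a fixed point $u$; this is left unproven (and is uncomfortably close to the uniqueness you are trying to establish), and the justification you offer --- restricting to $\overline{E}$ and invoking Example~\ref{Ex:Interval} --- cannot work as stated, because the fixed point $u$ lies outside $\overline{E}$, so the restriction retains no information about whether $u$ belongs to the supports of the restricted divisors.
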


\begin{proof}
Let $V \subset \Gamma$ be the finite set as in Lemma~\ref{Lem:Subdivision}, and let $E$ be an edge of the induced model $G$.  Let $W_E$ be the set of points $v \in E$ such that the functions $\varphi^E_0 + \varphi^E_1 (v)$ and $\varphi^E_1 + \varphi^E_0 (v)$ agree on an open subset of $\Gamma$.  Since $\varphi^E_0$ and $\varphi^E_1$ have different slopes along $E$ and have finitely many domains of linearity, $W_E$ is a finite set.  We define
\[
W = \bigcup_E W_E \cup V .
\]

Let $v \in \Gamma \smallsetminus V$.  Then $v$ is contained in an edge $E$ of the model $G$.  Let $\psi \in \Sigma$ be a function such that $\ddiv (\psi) + D \geq v$.  By definition, any 3 functions in $\Sigma$ are tropically dependent.  Thus, the functions $\psi, \varphi^E_0,$ and $\varphi^E_1$ are tropically dependent.  In other words, there exist coefficients $b, b_0, b_1$ such that
\[
\min \{ \psi + b, \varphi^E_0 + b_0 , \varphi^E_1 + b_1 \}
\]
occurs at least twice at every point of $\Gamma$.  By simultaneous tropical scaling, we may assume that $b=0$.  Since $\varphi^E_0$ and $\varphi^E_1$ have different slopes on $E$, we see that all 3 functions must simultaneously obtain the minimum at $v$.  This determines the coefficients $b_0$ and $b_1$, and in particular implies that $b_1 - b_0 = \varphi^E_0 (v) - \varphi^E_1 (v)$.

If $\psi \neq \min \{ \varphi^E_0 + b_0 , \varphi^E_1 + b_1 \}$, then in the tropical dependence between the three functions, there must be an open set where $\varphi^E_0 + b_0$ and $\varphi^E_1 + b_1$ simultaneously achieve the minimum.  It follows that $v \in W_E$.  In other words, if $v \notin W$, then $\psi$ is a unique tropical linear combination of $\varphi^E_0$ and $\varphi^E_1$.
\end{proof}

We now prove that, when $\Sigma$ has rank 1, the finite set of functions constructed in Lemma~\ref{Lem:GeneratingSet} generates $\Sigma$.

\begin{theorem}
\label{Thm:Rank1FG}
If $\Sigma \subseteq R(D)$ is a tropical linear series of rank 1, then it is generated by the functions $\varphi^E_i$.  More precisely, for each function $\psi \in \Sigma$, there exists an edge $E$ of $G$ such that $\psi \in \langle \varphi^E_0 , \varphi^E_1 \rangle$.  In particular, every tropical linear series of rank 1 is finitely generated.
\end{theorem}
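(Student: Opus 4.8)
The plan is to prove the set-level statement $|\Sigma| = \bigcup_E |\langle \varphi^E_0, \varphi^E_1 \rangle|$, the union ranging over the finitely many edges $E$ of the model $G$ from Lemma~\ref{Lem:GeneratingSet}. This implies the theorem: if $\psi \in \Sigma$, then $\ddiv(\psi) + D = \ddiv(\chi) + D$ for some $\chi \in \langle \varphi^E_0, \varphi^E_1 \rangle$ and some $E$, so $\psi - \chi$ is constant, and since a tropical submodule is closed under scalar addition, $\psi \in \langle \varphi^E_0, \varphi^E_1 \rangle$; finite generation is then immediate because $G$ has finitely many edges. Write $Z := \bigcup_E |\langle \varphi^E_0, \varphi^E_1 \rangle|$. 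Each $\langle \varphi^E_0, \varphi^E_1 \rangle$ is finitely generated, so by Lemma~\ref{Lem:FGClosed} each $|\langle \varphi^E_0, \varphi^E_1 \rangle|$ is a closed subset of $|D|$; hence $Z$ is closed, and since $Z \subseteq |\Sigma|$ it is closed in $|\Sigma|$. It therefore suffices to produce a dense subset of $|\Sigma|$ contained in $Z$.

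That dense subset comes straight from Proposition~\ref{Prop:FiniteVertexSet}. Let $W \subseteq \Gamma$ be the finite set produced there, and set $U := \{ D' \in |\Sigma| : \mathrm{Supp}(D') \not\subseteq W \}$. Given $D' \in U$, choose $v \in \mathrm{Supp}(D') \setminus W$, lying in an edge $E$ of $G$, and let $\psi \in \Sigma$ satisfy $\ddiv(\psi) + D = D'$; by Proposition~\ref{Prop:FiniteVertexSet}, $D'$ is the unique element of $|\Sigma|$ supported at $v$ and $\psi \in \langle \varphi^E_0, \varphi^E_1 \rangle$, so $D' \in Z$. Thus $U \subseteq Z$. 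On the other hand, $|\Sigma| \setminus U$ consists of the effective divisors of degree $\deg(D)$ supported on the finite set $W$, of which there are only finitely many, so $|\Sigma| \setminus U$ is finite.

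It remains to invoke the topology of $|\Sigma|$: it is a metric subspace of $\Sym^{\deg(D)} \Gamma$, it is connected by Lemma~\ref{Lem:Connect}, and it is infinite since $\dim |\Sigma| \geq 1$ by Lemma~\ref{Lem:LowerDim}. In a connected metric space with more than one point, every nonempty open set is infinite (otherwise some singleton would be clopen), so the cofinite set $U$ meets every nonempty open subset of $|\Sigma|$, i.e.\ $U$ is dense. Therefore $|\Sigma| = \overline{U} \subseteq Z \subseteq |\Sigma|$, so $|\Sigma| = Z$, and we are done. The substantive combinatorial work has already been carried out in Proposition~\ref{Prop:FiniteVertexSet}; the only points requiring care in the present argument are the reduction to the set-level statement (via the fact that $\ddiv \varphi_1 = \ddiv \varphi_2$ forces $\varphi_1 - \varphi_2$ to be constant) and the verification that each $|\langle \varphi^E_0, \varphi^E_1 \rangle|$ is genuinely closed, so that the soft density argument can be applied. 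I do not anticipate a serious obstacle beyond this bookkeeping.
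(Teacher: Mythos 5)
Your proposal is correct and follows essentially the same route as the paper: both reduce to Proposition~\ref{Prop:FiniteVertexSet}, use Lemma~\ref{Lem:FGClosed} to see that the union of the $\vert \langle \varphi^E_0, \varphi^E_1\rangle\vert$ is closed, note that its complement in $\vert\Sigma\vert$ is finite, and conclude by the connectedness of $\vert\Sigma\vert$ (Lemma~\ref{Lem:Connect}). Your density-of-a-cofinite-set phrasing is just a repackaging of the paper's clopen-decomposition argument, and your explicit bookkeeping of the passage from $\vert\Sigma\vert = Z$ back to $\psi \in \langle \varphi^E_0, \varphi^E_1\rangle$ via $\ddiv\varphi_1 = \ddiv\varphi_2 \Rightarrow \varphi_1 - \varphi_2$ constant is a welcome (correct) clarification.
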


\begin{proof}
For each edge $E$ of the model $G$, let $\Upsilon_E = \langle \varphi^E_0 , \varphi^E_1 \rangle \subseteq \Sigma$, and let $\Upsilon = \cup \Upsilon_E$.  Now, let $\psi \in \Sigma$ and $D' = \ddiv (\psi) + D$.  By Proposition~\ref{Prop:FiniteVertexSet}, if $\psi \notin \Upsilon$, then $\mathrm{Supp} (D') \subseteq W$.  Since $W$ is finite, it follows that $\vert \Sigma \vert \smallsetminus \vert \Upsilon \vert$ is finite.  Finite sets are closed, and by Lemma~\ref{Lem:FGClosed}, $\vert \Upsilon \vert$ is closed.  By Lemma~\ref{Lem:Connect}, however, $\vert \Sigma \vert$ is connected, hence $\vert \Sigma \vert \smallsetminus \vert \Upsilon \vert = \emptyset$.
\end{proof}

\subsection{Strong Tropical Linear Series of Rank 1}

Before proving that every tropical linear series of rank 1 is strong, we first prove the following useful lemma.

\begin{lemma}
\label{Lem:Ind3}
Let $x,y \in \Gamma$, and let $A = \{ \varphi_1 , \varphi_2 , \varphi_3 \}$ be a tropically dependent set.  If
\begin{enumerate}
\item $\varphi_1 (x) = \varphi_2 (x) < \varphi_3 (x)$, and
\item $\varphi_1 (y) = \varphi_3 (y) \leq \varphi_2 (y)$,
\end{enumerate}
then $\vartheta = \min_{\varphi_i \in A} \{ \varphi_i \}$ is a dependence.  
\end{lemma}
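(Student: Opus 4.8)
The plan is to show that every dependence of $A$ has all three coefficients equal; since $A$ is tropically dependent, this will immediately give that $\vartheta$ itself is a dependence. Concretely, fix a dependence $\vartheta' = \min\{\varphi_1 + c_1,\, \varphi_2 + c_2,\, \varphi_3 + c_3\}$ of $A$ and subtract $c_1$ from each coefficient, so that $c_1 = 0$; the goal is then to prove $c_2 = c_3 = 0$, which forces $\vartheta' = \vartheta$.

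The heart of the argument is that evaluating the dependence condition at the two special points $x$ and $y$ rigidly pins down $c_2$ and $c_3$. Set $\delta := \varphi_3(x) - \varphi_1(x) > 0$. First I would rule out $c_2 < 0$: in that case $\varphi_2 + c_2 < \varphi_1$ at $x$, so $\varphi_1 + c_1$ cannot attain the minimum of $\vartheta'$ at $x$, which (there being only three terms) forces $\varphi_2 + c_2$ and $\varphi_3 + c_3$ to tie at $x$, i.e.\ $c_3 = c_2 - \delta < c_2 < 0$. Comparing the three values of $\vartheta'$ at $y$ and using $\varphi_1(y) = \varphi_3(y) \le \varphi_2(y)$, one finds that $\varphi_3 + c_3$ is then the \emph{unique} minimizer at $y$, contradicting that $\vartheta'$ is a dependence. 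Symmetrically, $c_2 > 0$ makes $\varphi_2 + c_2 > \varphi_1$ at $y$, so $\varphi_1 + c_1$ and $\varphi_3 + c_3$ must tie at $y$, giving $c_3 = 0$; but then $\varphi_1 + c_1$ is the unique minimizer at $x$ (here one uses the strict inequality $\varphi_1(x) < \varphi_3(x)$ from hypothesis (1)), again a contradiction. Hence $c_2 = 0$.

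With $c_2 = 0$, the three values of $\vartheta'$ at $y$ are $\varphi_1(y)$, $\varphi_2(y)$, and $\varphi_1(y) + c_3$, with $\varphi_1(y) \le \varphi_2(y)$. If $c_3 < 0$ then $\varphi_3 + c_3$ is the unique minimizer at $y$; if $c_3 > 0$ and $\varphi_1(y) < \varphi_2(y)$ then $\varphi_1 + c_1$ is the unique minimizer at $y$. Either way we contradict the dependence property, so $c_3 = 0$ and the proof concludes.

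The one place that will need extra care is this last step when hypothesis (2) holds with equality, so that $\varphi_1(y) = \varphi_2(y) = \varphi_3(y)$ and the argument above only yields $c_3 \ge 0$; I expect this degenerate configuration to be the main obstacle. To treat it I would track the image of the map $\Gamma \to \mathbb{R}^2$ sending $p$ to $(\varphi_1(p) - \varphi_2(p),\, \varphi_1(p) - \varphi_3(p))$ --- a dependence with coefficients $(0, c_2, c_3)$ is exactly the condition that this image lies on the standard tropical line with vertex $(c_2, c_3)$ --- and then use connectedness of $\Gamma$ together with the positions of the images of $x$ and $y$ to locate the vertex at the origin; if this fails, the resolution is presumably that hypothesis (2) should be read with strict inequality.
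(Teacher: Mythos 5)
Your argument in the non\-degenerate case $\varphi_1(y)=\varphi_3(y)<\varphi_2(y)$ is correct and complete, and it takes a genuinely different route from the paper. The paper argues contrapositively: if some $\varphi_i$ were the unique minimizer of $\vartheta$ at a point $z$, it perturbs the other two coefficients by $-\epsilon$ and $-2\epsilon$ (ordered so that one term wins uniquely at $x$, another at $y$, and $\varphi_i$ at $z$) to manufacture a certificate of independence, contradicting the dependence of $A$. You instead show that the conditions at $x$ and $y$ rigidly determine the coefficients of \emph{any} dependence, so the dependence guaranteed by the hypothesis must be $\vartheta$ up to a global scalar. Your version yields slightly more (uniqueness of the dependence up to scaling, which is in the spirit of what the paper uses elsewhere); the paper's is the dual argument via certificates. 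Both are comparably short, and both break at exactly the same place.

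Your suspicion about the degenerate case $\varphi_1(y)=\varphi_2(y)=\varphi_3(y)$ is well founded: the statement is false there, so no completion of your argument (including the $\mathbb{R}^2$-image idea) can succeed without strengthening hypothesis (2) to a strict inequality or otherwise excluding this configuration. Take $\Gamma=[0,4]$, $x=0$, $y=4$, $\varphi_1\equiv 0$, $\varphi_2(t)=\max(0,1-|t-2|)$, and $\varphi_3$ piecewise linear with $\varphi_3(0)=1$, $\varphi_3\equiv -1$ on $[1,3]$, and $\varphi_3(4)=0$. Hypotheses (1) and (2) hold (the latter with equality), and $\min\{\varphi_1,\varphi_2,\varphi_3+1\}\equiv 0$ achieves its minimum at least twice everywhere, so $A$ is tropically dependent with dependence coefficients $(0,0,1)$; yet $\varphi_3$ is the unique minimizer of $\vartheta$ at $t=2$, so $\vartheta$ is not a dependence. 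This is precisely the $c_3>0$ escape your analysis leaves open. The paper's own proof has the same gap at the same spot: in the case where $\varphi_3$ is the unique minimizer at some $z$, the proposed certificate $\min\{\varphi_1-\epsilon,\varphi_2-2\epsilon,\varphi_3\}$ requires $\varphi_1-\epsilon$ to win uniquely at $y$, which needs $\epsilon<\varphi_2(y)-\varphi_1(y)$ and so fails when $\varphi_2(y)=\varphi_1(y)$ (and in the example above no certificate can exist, since the set is dependent). So your parenthetical fix, reading (2) with strict inequality, is the right one; one should then check that the application in Lemma~\ref{Lem:Prop6} only invokes the statement in the non\-degenerate configuration.
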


\begin{proof}
If there is a point $z \in \Gamma$ where $\varphi_1$ uniquely obtains the minimum in $\vartheta$, then, for sufficiently small $\epsilon > 0$, consider the tropical linear combination
\[
\vartheta' = \min \{ \varphi_1 , \varphi_2 - \epsilon , \varphi_3 - 2 \epsilon \} .
\]
We see that $\varphi_2 - \epsilon$ obtains the minimum uniquely at $x$, $\varphi_3 - 2 \epsilon$ obtains the minimum uniquely at $y$, and $\varphi_1$ obtains the minimum uniquely at $z$.  Thus, $\vartheta'$ is a certificate of independence, which is impossible, because $A$ is tropically dependent. 

Similarly, if there is a point in $\Gamma$ where $\varphi_2$ uniquely obtains the minimum, then
\[
\min \{ \varphi_1 - \epsilon , \varphi_2 , \varphi_3 - 2 \epsilon \}
\]
is a certificate of independence, and if there is a point in $\Gamma$ where $\varphi_3$ uniquely obtains the minimum, then
\[
\min \{ \varphi_1 - \epsilon , \varphi_2 - 2 \epsilon , \varphi_3 \}
\]
is a certificate of independence.  It follows that none of the 3 functions $\varphi_i$ obtains the minimum uniquely at any point, and thus $\vartheta$ is a dependence.
\end{proof}

\begin{lemma}
\label{Lem:Prop6}
Every tropical linear series $\Sigma$ of rank 1 satisfies condition (5) of the definition of strong tropical linear series.
\end{lemma}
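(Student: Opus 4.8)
The plan is to build the required rank-$2$ valuated matroid $\mathcal{V}$ on $\Sigma$ by hand, exploiting the fact that, for a rank-$1$ tropical linear series, both the underlying ordinary matroid and (up to scaling) its valuated circuits are already forced by tropical dependence. First I would record that the underlying matroid is determined: if $\varphi,\psi\in\Sigma$ and $\min\{\varphi+a,\psi+b\}$ is a dependence, then $\varphi+a=\psi+b$ at every point, so $\varphi-\psi$ is constant. Hence any two functions in $\Sigma$ that do not differ by a constant are tropically independent, and combining this with property~(2) (any three functions in $\Sigma$ are dependent) shows that the minimal tropically dependent subsets of $\Sigma$ are exactly the pairs $\{\varphi,\psi\}$ with $\varphi-\psi$ constant and the triples $\{\varphi_{1},\varphi_{2},\varphi_{3}\}$, no two of which differ by a constant. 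These are the circuits of a rank-$2$ matroid on $\Sigma$ whose parallel classes are the fibres of $\Sigma\to\vert\Sigma\vert$.

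Next I would equip this matroid with valuated circuits. For a parallel pair $\{\varphi,\varphi+c\}$ set $V(\varphi)=0$, $V(\varphi+c)=-c$, together with all scalar translates; then $\varphi+V(\varphi)=(\varphi+c)+V(\varphi+c)$ identically, so the associated minimum is a dependence. For a non-parallel triple $C=\{\varphi_{1},\varphi_{2},\varphi_{3}\}$, property~(2) supplies a dependence $\vartheta_{C}=\min\{\varphi_{1}+a_{1},\varphi_{2}+a_{2},\varphi_{3}+a_{3}\}$, and we set $V(\varphi_{i})=a_{i}$, again together with scalar translates. The key point making this well defined is that $\vartheta_{C}$ is unique up to tropical scaling; I would prove this using Lemma~\ref{Lem:Ind3}: if $\min\{\varphi_{1},\varphi_{2}+a,\varphi_{3}+b\}$ and $\min\{\varphi_{1},\varphi_{2}+a',\varphi_{3}+b'\}$ are both dependences with distinct coefficient vectors, then, relabelling so that (say) $a\le a'$ and lowering one summand by a small $\epsilon$, one produces a tropical combination of the three functions in which each summand attains the minimum uniquely at some point, i.e.\ a certificate of independence — exactly the configuration of tie loci excluded by Lemma~\ref{Lem:Ind3} — contradicting property~(2). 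Defining $\mathcal{V}$ to be the collection of all these functions and their scalar translates, axioms (1)--(4) of a valuated matroid are immediate from the description of the circuits.

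The substance of the argument is then axiom~(5), tropical circuit elimination. Since that axiom only involves circuits contained in the union of two given circuits, and in rank $2$ such a union has at most four parallel classes, it suffices to check compatibility of the valuated circuits on the four triples among any four pairwise-non-parallel functions $\psi_{1},\dots,\psi_{4}\in\Sigma$ — equivalently, the three-term tropical Plücker relation on the associated Plücker data. Here I would invoke finite generation: by Theorem~\ref{Thm:Rank1FG} each $\psi_{j}$ lies in some $\langle\varphi^{E}_{0},\varphi^{E}_{1}\rangle$, and by Proposition~\ref{Prop:FiniteVertexSet} (and the polyhedral structure of $\vert\Sigma\vert$) the pencils $\langle\psi_{i},\psi_{j}\rangle$ trace well-understood paths inside $\vert\Sigma\vert$; a case analysis on how these paths overlap identifies the ``close pair'' among the four functions and yields the required relation in each case. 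Finally the displayed minima $\min_{\varphi\in\Sigma}\{\varphi+V(\varphi)\}$ are dependences by construction, which is the extra condition in property~(5).

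I expect the main obstacle to be the circuit-elimination step: showing that the individually forced triple-dependences cohere into a single valuated matroid (equivalently, into a tree), and keeping the overlap case analysis of the pencils clean. The uniqueness of the triple dependence up to scaling is the other technical ingredient, but it is localized and follows from Lemma~\ref{Lem:Ind3}; everything else — the description of the circuits, the treatment of parallel pairs, and axioms (1)--(4) — is routine bookkeeping.
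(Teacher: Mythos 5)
Your overall architecture matches the paper's: take the valuated circuits to be the (minimal‑support) tropical dependences on at most three elements of $\Sigma$, observe that parallel pairs and pairwise non‑parallel triples are the circuits of a rank‑$2$ matroid, and reduce everything to the circuit‑elimination axiom, with Lemma~\ref{Lem:Ind3} as the tool for recognizing dependences. The problem is that the elimination step is the entire mathematical content of the lemma, and your proposal does not actually carry it out. You reduce it to ``the three‑term tropical Pl\"ucker relation on the four triples among any four pairwise‑non‑parallel functions'' and then say that ``a case analysis on how these paths overlap \ldots yields the required relation in each case.'' That case analysis is the proof; without it nothing has been shown. Two smaller points compound this. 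First, your count is off: two circuits sharing a single element can involve five parallel classes, not four, so the reduction to $4$‑element Pl\"ucker data needs to be justified by quoting the general equivalence between the Pl\"ucker relations and circuit elimination rather than by a naive union bound. Second, the uniqueness of the triple dependence up to tropical scaling --- which your construction needs just to make the valuated circuits well defined --- is only gestured at (``lowering one summand by a small $\epsilon$ \ldots''); the $\epsilon$‑perturbation argument has to be run separately in the cases where the two coefficient vectors differ in one coordinate versus two, and as written it is not a proof.

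The paper avoids all of this by not choosing a preferred dependence per triple at all: it defines $\mathcal{V}$ to be \emph{every} function $V$ of support at most $3$ for which $\min\{\varphi + V(\varphi)\}$ is a dependence, and then verifies elimination directly. Given $V_1, V_2$ with $V_1(\varphi_0)=V_2(\varphi_0)$ and $V_1(\varphi_1)<V_2(\varphi_1)$, it explicitly manufactures the eliminated circuit: it shows $V_1(\varphi_1)+\varphi_1$ dominates the auxiliary combination $\vartheta=\min_{\varphi\neq\varphi_0,\varphi_1}\{\varphi+\min\{V_1(\varphi),V_2(\varphi)\}\}$ by playing the two dependences off each other, then shifts coefficients to create two tie points $x$ and $y$ among three chosen functions, and concludes via Lemma~\ref{Lem:Ind3} that the resulting three‑term combination is a dependence. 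This argument is purely local to $V_1$ and $V_2$; it uses neither Theorem~\ref{Thm:Rank1FG} nor Proposition~\ref{Prop:FiniteVertexSet}, and it requires no enumeration of how pencils sit inside $\vert\Sigma\vert$. If you want to salvage your route, the honest version of your last step is to prove that $\vert\Sigma\vert$, metrized by the coefficients of the pairwise dependences, satisfies the four‑point condition --- but that is a genuinely nontrivial statement, essentially equivalent to what the paper proves by its direct construction, and it cannot be left as ``a case analysis.''
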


\begin{proof}
Let $\mathcal{V}$ be the set of functions $V : \Sigma \to \mathbb{R} \cup \{ \infty \}$ such that $\vert \supp (V) \vert \leq 3$ and $\min \{ \varphi + V(\varphi) \}$ is a dependence.  We show that $\mathcal{V}$ is the set of valuated circuits of a valuated matroid on $\Sigma$.

Let $V_1 , V_2 \in \mathcal{V}$ and $\varphi_0 , \varphi_1 \in \Sigma$ with $V_1 (\varphi_0) = V_2 (\varphi_0) \neq \infty$ and $V_1 (\varphi_1) < V_2 (\varphi_1)$.  For simplicity, assume that no two elements of $A = \supp (V_1) \cup \supp (V_2)$ differ by a constant  We show that there exists $V \in \mathcal{V}$ such that $V (\varphi_0) = \infty$, $V( \varphi_1) = V_1 (\varphi_1)$, and $V \geq \min \{V_1 , V_2 \}$.

To construct $V$, first consider the tropical linear combination 
\[
\vartheta = \min_{\varphi \in A \smallsetminus \{ \varphi_0 , \varphi_1 \}} \{ \varphi + \min \{ V_1 (\varphi), V_2 (\varphi) \} \} .
\]
We first show that $V_1 (\varphi_1) + \varphi_1 \geq \vartheta$.  To see this, let $x \in \Gamma$.  Since $\min \{ \varphi + V_1 (\varphi) \}$ is a dependence, there exists $\varphi_i \in \supp(V_1)$, $i \neq 1$, such that $V_1 (\varphi_i) + \varphi_i (x) \leq V_1 (\varphi_1) + \varphi_1 (x)$.  If $i \neq 0$, then 
\[
\vartheta (x) \leq V_1 (\varphi_i) + \varphi_i (x) \leq V_1 (\varphi_1) + \varphi_1 (x).
\]
If $i=0$, then since $\min \{ \varphi + V_2 (\varphi) \}$ is a dependence and $V_1 (\varphi_0) = V_2 (\varphi_0)$, there exists $\varphi_j \in \supp (V_2)$ such that $V_2 (\varphi_j) + \varphi_j (x) \leq V_1 (\varphi_0) + \varphi_0 (x)$.  Note that $\varphi_j \neq \varphi_1$ because $V_1 (\varphi_1) < V_2 (\varphi_1)$.  It follows that 
\[
\vartheta (x) \leq V_2 (\varphi_j) + \varphi_j (x) \leq V_1 (\varphi_1) + \varphi_1 (x).
\]
As a consequence, we see that $c = \min \{ \varphi_1 (x) - \vartheta (x) \colon x \in \Gamma \}$ is nonnegative.
  
Now, let $c_i = c + \min \{ V_1 (\varphi_i) , V_2 (\varphi_i) \}$, and consider the tropical linear combination
\[
\vartheta' = \min_{\varphi_i \in A \smallsetminus \{ \varphi_0 , \varphi_1 \} } \{ c_i + \varphi_i \} . 
\]
Note that $V_1 (\varphi_1) + \varphi_1 \geq \vartheta'$, with equality at some point $x \in \Gamma$.  Let $\varphi_2 \in A \smallsetminus \{ \varphi_0 , \varphi_1 \}$ satisfy $V_1 (\varphi_1) + \varphi_1 (x) = c_2 + \varphi_2 (x)$.  We now show that there exists a point $y \in \Gamma$ and a function $\varphi_3 \in A \smallsetminus \{ \varphi_0 , \varphi_1 , \varphi_2 \}$ such that $c_3 + \varphi_3 (y) = \vartheta' (y)$.  Indeed, if $c_2 + \varphi_2 \neq \vartheta'$, then there exists a point $y \neq x$ such that $c_2 + \varphi_2 (y) > \vartheta' (y)$, and we choose $\varphi_3$ so that $c_3 + \varphi_3 (y) = \vartheta' (y)$.  Otherwise, we have $c_2 + \varphi_2 = \vartheta'$, and by the same argument that shows that $V_1 (\varphi_1) + \varphi_1 \geq \vartheta$, we see that $\varphi_2 \in \supp(V_1) \cap \supp(V_2)$ and $V_1 (\varphi_2) = V_2 (\varphi_2)$.  In this case, let $\varphi_3$ be the unique function in $\supp (V_2) \smallsetminus \{ \varphi_0 , \varphi_ 2 \}$.  Since $\min \{ \varphi + V_2 (\varphi) \}$ is a dependence, there exists a point $y \in \Gamma$ with $c_3 + \varphi_3 (y) = \vartheta' (y)$.  By slightly increasing the coefficient of $\varphi_2$ if necessary, we may assume that $y \neq x$.  Because $\varphi_3$ obtains the minimum at $y$, we see that $d = \min \{ V_1 (\varphi_1) + \varphi_1 (y) , c_2 + \varphi_2 (y) \} - (c_3 + \varphi_3 (y))$ is nonnegative.  

Finally, let $d_3 = c_3 + d$ and consider the function
\begin{displaymath}
V (\varphi_i) = \left\{ \begin{array}{ll}
V_1 (\varphi_1) & \textrm{if $i=1$,} \\
c_2 & \textrm{if $i=2$,}\\
d_3 & \textrm{if $i=3$,}\\
\infty & \textrm{otherwise.}
\end{array} \right.
\end{displaymath}
The functions $V_1 (\varphi_1) + \varphi_1$ and $c_2 + \varphi_2$ obtain the minimum at $x$, and the function $d_3 + \varphi_3$ obtains the minimum at $y$, along with one of the other two functions.  If $d$ is nonzero, then $d_3 + \varphi_3$ cannot obtain the minimum at $x$, and if $d=0$, then by construction, one of the other two functions does not obtain the minimum at $y$.  In other words, at least 2 of the 3 functions obtain the minimum at one of the 2 points, and exactly 2 of the 3 functions obtain the minimum at the other point.  Since any set of 3 functions in $\Sigma$ is dependent, it follows from Lemma~\ref{Lem:Ind3} that $\min \{ \varphi + V(\varphi) \}$ is a dependence.  Note that $V (\varphi_0) = \infty$, $V( \varphi_1) = V_1 (\varphi_1)$, and $V \geq \min \{V_1 , V_2 \}$.  Thus, $\mathcal{V}$ is the set of valuated circuits of a valuated matroid on $\Sigma$.
\end{proof}

\begin{corollary}
\label{Cor:WeakIsStrong}
Every tropical linear series $\Sigma$ of rank 1 is strong.  Moreover, $\vert \Sigma \vert$ is compact of pure dimension 1.
\end{corollary}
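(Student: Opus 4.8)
The plan is to observe that this corollary is essentially an assembly of the results proved earlier in this section. First I would record that, by Theorem~\ref{Thm:Rank1FG}, every tropical linear series $\Sigma$ of rank $1$ is finitely generated; in fact $\Sigma = \bigcup_E \langle \varphi^E_0, \varphi^E_1 \rangle$, where the union is over the finitely many edges $E$ of the model $G$ furnished by Lemmas~\ref{Lem:Subdivision} and~\ref{Lem:GeneratingSet}. Lemma~\ref{Lem:FGClosed} then shows that $\vert \Sigma \vert$ is a closed, definable subset of $\vert D \vert$, which is precisely property~(4) of a strong tropical linear series; property~(5) is exactly the content of Lemma~\ref{Lem:Prop6}. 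Since $\Sigma$ satisfies conditions (1)--(3) by hypothesis, it is a strong tropical linear series.

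For compactness I would reuse the argument in the proof of Lemma~\ref{Lem:FGClosed}: fixing the finite generating set $\{ \varphi^E_i \}$ and an $M$ with $\varphi^E_i < \varphi^{E'}_{i'} + M$ for all indices, the map sending a coefficient vector to the corresponding divisor $D + \ddiv(\min\{ a^E_i + \varphi^E_i \})$ restricts to a continuous surjection from the compact cube $[0,M]^N$ onto $\vert \Sigma \vert$, where $N$ is the number of generators. Hence $\vert \Sigma \vert$ is the continuous image of a compact set, and so compact. (Equivalently, $\vert \Sigma \vert$ is a closed subset of the compact space $\Sym^d \Gamma$.)

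For the dimension statement I would use the decomposition $\vert \Sigma \vert = \bigcup_E \vert \Upsilon_E \vert$ with $\Upsilon_E = \langle \varphi^E_0, \varphi^E_1 \rangle$. Each $\vert \Upsilon_E \vert$ is the image of the piecewise-affine map $\mathbb{R}^2/(1,1) \cong \mathbb{R} \to \Sym^d \Gamma$ sending $a_0 - a_1$ to $D + \ddiv(\min\{ a_0 + \varphi^E_0, a_1 + \varphi^E_1 \})$, so $\dim \vert \Upsilon_E \vert \leq 1$; and since $\varphi^E_0$ and $\varphi^E_1$ have distinct slopes along $E$ they do not differ by a constant, so this map is non-constant and (as in the proof of Lemma~\ref{Lem:LowerDim}) is injective with $1$-dimensional image on a suitable subinterval. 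A connected piecewise-affine image of $\mathbb{R}$ with more than one point has local dimension $1$ at every point, so $\vert \Upsilon_E \vert$ has pure dimension $1$, and therefore $\vert \Sigma \vert$, a finite union of closed subsets each of pure dimension $1$, has pure dimension $1$. (One may also invoke Lemma~\ref{Lem:LowerDim} and Corollary~\ref{Cor:UpperDim} for the global statement $\dim \vert \Sigma \vert = 1$ and combine it with the local analysis of the $\vert \Upsilon_E \vert$.)

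I do not expect any real obstacle here; the only point requiring a little care is the passage from $\dim \vert \Sigma \vert = 1$ to \emph{pure} dimension $1$, i.e.\ ruling out isolated lower-dimensional components. This is handled by the remark that each $\vert \Upsilon_E \vert$ is a connected arc, together with the path-connectedness of $\vert \Sigma \vert$ from Lemma~\ref{Lem:Connect}, which forbids isolated points.
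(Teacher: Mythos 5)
Your proposal is correct and follows essentially the same route as the paper: finite generation (Theorem~\ref{Thm:Rank1FG}) plus Lemma~\ref{Lem:FGClosed} gives property (4), Lemma~\ref{Lem:Prop6} gives property (5), Corollary~\ref{Cor:UpperDim} bounds the dimension above, and Lemma~\ref{Lem:Connect} rules out isolated points, yielding pure dimension 1. The extra decomposition of $\vert \Sigma \vert$ into the arcs $\vert \Upsilon_E \vert$ is a harmless elaboration of the same equidimensionality argument, and your compactness remark (closed subset of the compact space $\Sym^d \Gamma$) is a correct, mild strengthening of the paper's ``closed.''
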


\begin{proof}
Since $\Sigma$ is finitely generated, $\vert \Sigma \vert$ is closed and definable by Lemma~\ref{Lem:FGClosed}, and it satisfies property (5) by Lemma~\ref{Lem:Prop6}.  It has dimension 1 by Corollary~\ref{Cor:UpperDim} and has no isolated points by Lemma~\ref{Lem:Connect}, and is therefore equidimensional.
\end{proof}

\subsection{Harmonic Morphisms}

Let $\Sigma = \langle \varphi_0 , \ldots , \varphi_n \rangle$ be a finitely generated strong tropical linear series.  By definition, there exists a valuated matroid $M$ on $\{ \varphi_0, \ldots \varphi_n \}$ such that, if $V$ is a valuated circuit of $M$, then 
$\min \{ \varphi_i + V(\varphi_i) \}$ is a tropical dependence.  The image of $\Gamma$ under the map $\Phi : \Gamma \to \mathbb{T}\mathbb{P}^n$ given by $\Phi = (\varphi_0 , \ldots , \varphi_n )$ is contained in
\[
\mathcal{B}(M) := \{ x \in \mathbb{T}\mathbb{P}^n \mbox{ } \vert \mbox { } \min_{i} \{V(i) + x_i \} \text{ occurs at least twice for all valuated circuits } V \} .
\]

\begin{proposition}
\label{Prop:Harmonic}
There exists a tropical modification $\widetilde{\Gamma}$ of $\Gamma$ and a balanced map $\widetilde{\Phi} : \widetilde{\Gamma} \to \mathcal{B}(M)$ such that $\widetilde{\Phi} \vert_{\Gamma} = \Phi$.
\end{proposition}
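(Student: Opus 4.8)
The plan is to build $\widetilde{\Gamma}$ by attaching finitely many metric trees to $\Gamma$, one at each vertex of a suitable model, mapping each attached tree into $\mathcal{B}(M)$ so as to absorb the failure of $\Phi$ to be balanced. First I would fix a model $G$ of $\Gamma$ containing $\supp(D)$ and all points of valence $\neq 2$, refined so that every $\varphi_i$ is affine on each edge of $G$. Then $\Phi$ is affine on each edge, hence automatically balanced at every interior point of an edge, and the only possible failures of balancing occur at the vertices $v \in V(G)$.

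Next I would pin down the balancing defect at a vertex $v$. Writing $u_{\zeta} = (s_{\zeta}(\varphi_0), \ldots, s_{\zeta}(\varphi_n))$ for the outgoing slope vector of $\Phi$ along a tangent direction $\zeta$, the defect is $\delta_v := \sum_{\zeta \in T_v \Gamma} u_{\zeta}$. Since $(\delta_v)_i = -\ord_v(\varphi_i)$ and $\varphi_i \in R(D)$, one gets $\delta_v \equiv -\big((\ddiv \varphi_0 + D)(v), \ldots, (\ddiv \varphi_n + D)(v)\big)$ modulo the lineality space $\mathbb{R}(1,\ldots,1)$ of $\mathbb{TP}^n$, i.e. $-\delta_v \equiv \sum_i b_i^v\, e_i$ with $b_i^v := (\ddiv \varphi_i + D)(v) \in \mathbb{Z}_{\geq 0}$. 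The matroid $M$ has no loops (a single function is never a tropical dependence), and two generators in the same parallel class differ by a constant, so the corresponding $b_i^v$ agree; grouping the coordinates by parallel class we may therefore write $-\delta_v \equiv \sum_{F} b_F^v\, e_F$, a $\mathbb{Z}_{\geq 0}$-linear combination of the primitive generators $e_F$ of the rays of the Bergman recession fan of $M$ attached to rank-one flats. I would also note that $\sum_{v} \delta_v = 0$, since on each edge the two endpoint contributions $u_{\zeta_1}, u_{\zeta_2}$ are opposite.

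The main step is a local construction: for each $v$, I claim there is a finite metric tree $T_v$ (with some infinite leaf edges), a distinguished root to be glued to $v$, and a $\PL$ map $\widetilde{\Phi}_v \colon T_v \to \mathcal{B}(M)$ with integer slopes, sending the root to $\Phi(v)$, balanced at every non-root point of $T_v$, and with the sum of the outgoing slope vectors of $\widetilde{\Phi}_v$ at the root equal to $-\delta_v$ modulo $\mathbb{R}(1,\ldots,1)$. Granting this, set $\widetilde{\Gamma} := \Gamma \cup \bigcup_v T_v$ (gluing the root of $T_v$ to $v$) and $\widetilde{\Phi} := \Phi \cup \bigcup_v \widetilde{\Phi}_v$; then $\widetilde{\Gamma}$ is a tropical modification of $\Gamma$ because each $T_v$ is a tree, $\widetilde{\Phi}|_{\Gamma} = \Phi$, and $\widetilde{\Phi}$ is balanced everywhere — on edge interiors and interior points of the $T_v$ by construction, and at each $v$ because the contribution $\delta_v$ of $\Phi$ and the contribution $-\delta_v$ of $\widetilde{\Phi}_v$ cancel. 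To build $T_v$ one routes, for each rank-one flat $F$, a total weight $b_F^v$ of "mass" from $\Phi(v)$ to infinity along the recession ray $e_F$, through $\mathcal{B}(M)$; this is possible because $\mathcal{B}(M)$ is a connected tropical linear space whose recession fan is the full Bergman fan of $M$, and because property (5) of a strong tropical linear series — the condition that valuated circuits of $M$ give tropical dependences, which near $v$ constrains the slopes $s_{\zeta}(\varphi_i)$ — forces $-\delta_v$ to lie in the cone spanned by the recession directions of $\mathcal{B}(M)$ that are accessible from $\Phi(v)$.

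The hard part will be exactly this local construction. When $\Phi(v)$ lies in a lower-dimensional stratum of $\mathcal{B}(M)$, a straight recession leg in direction $e_F$ emanating from $\Phi(v)$ need not stay in $\mathcal{B}(M)$, so one must first travel within $\mathcal{B}(M)$ toward a more degenerate stratum (ultimately a vertex), introducing finitely many bends; each bend is absorbed by further legs, and the process terminates because the bounded part of $\mathcal{B}(M)$ is a finite complex and at a vertex of $\mathcal{B}(M)$ the available recession rays positively span the ambient lattice (already the rays $e_F$ over rank-one flats do, since $\sum_F e_F$ over a cover of $E$ by parallel classes is $\equiv 0$). Conceptually, this is the assertion that the degree-zero cycle $\sum_v \delta_v \cdot [\Phi(v)]$, which is null-homologous since $\sum_v \delta_v = 0$, bounds a one-dimensional cycle-with-ends inside $\mathcal{B}(M)$ — a reflection of the "smoothness" of $\mathcal{B}(M)$ as a locally matroidal tropical variety. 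An alternative and perhaps cleaner route, which I would keep in reserve, is to present $\mathcal{B}(M)$ as an iterated stable intersection of $n-r$ tropical hyperplanes, tropically modify the target $\mathbb{TP}^n$ along these hyperplanes one at a time together with compatible modifications of the source $\Gamma$ (legal because $\Phi$ already respects the hyperplane conditions), and extract $\widetilde{\Phi}$ after $n-r$ steps.
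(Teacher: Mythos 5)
Your reduction to a local problem is sound: the defect computation $(\delta_v)_i = -\ord_v(\varphi_i)$, the identity $-\delta_v \equiv \sum_i (\ddiv \varphi_i + D)(v)\, e_i$ modulo the lineality space, and the observation that balancing can only fail at finitely many points are all correct. But the proposal stops exactly where the proof has to happen. The existence of the local trees $T_v$ --- a balanced map from a tree into $\mathcal{B}(M)$, rooted at $\Phi(v)$, with prescribed defect $-\delta_v$ at the root --- is essentially the entire content of the proposition, and you offer only a heuristic for it (``route mass toward a more degenerate stratum, absorb each bend by further legs, the process terminates''). The assertion that property (5) ``forces $-\delta_v$ to lie in the cone spanned by the recession directions of $\mathcal{B}(M)$ that are accessible from $\Phi(v)$'' is precisely what needs proof: writing $-\delta_v$ as a nonnegative combination of the Bergman rays $e_F$ only places it in the recession cone of the whole fan, not in the local cone at $\Phi(v)$, and your own example of the difficulty (a straight recession leg from a point of a lower-dimensional stratum leaving $\mathcal{B}(M)$) shows that this distinction is exactly the issue. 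Likewise, the ``null-homologous $0$-cycle bounds a $1$-cycle with ends'' statement is global, whereas you need a separate local solution at each $v$ with the correct nonnegative multiplicities.

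The paper's construction is much more direct and avoids routing through the bounded part of $\mathcal{B}(M)$ altogether: for each $i$ and each point $x$ with $D + \ddiv(\varphi_i) \geq x$, it attaches a single infinite ray at $x$ and extends $\varphi_i$ linearly along it with slope determined by $\ord_x(\varphi_i)$, so that each new ray maps to a straight ray in a coordinate direction and the balancing at $x$ is immediate from the choice of slopes; the trees, the model $G$, and the bookkeeping by parallel classes are all unnecessary. If you want to complete your version, the missing lemma is the local one: for $p = \Phi(v) \in \mathcal{B}(M)$ and a defect of the specific form $\sum_i b_i e_i$ with $b_i = (\ddiv\varphi_i + D)(v) \geq 0$ coming from functions of $\Sigma$, there is a balanced tree in $\mathcal{B}(M)$ rooted at $p$ absorbing it. Proving that requires using the dependence condition from property (5) at the point $v$ itself (to control which coordinates achieve the minimum of each circuit at $\Phi(v)$), not just the combinatorics of the Bergman fan, and as written your proposal does not do this.
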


\begin{proof}
We construct the tropical modification $\widetilde{\Gamma}$ by adding infinite rays to $\Gamma$.  Specifically, for each $i$ and each point $x \in \Gamma$ with $D + \ddiv ( \varphi_i ) \geq x$, add an infinite edge to $\Gamma$ based at $x$.  We extend $\varphi_i$ to $\widetilde{\Gamma}$ as a linear function with slope $-\ord_x (\varphi_i)$ along the infinite edge based at $x$.  The resulting map $\widetilde{\Phi}$ given coordinatewise by the extended $\varphi_i$'s is balanced.
\end{proof}

In the rank 1 case, Proposition~\ref{Prop:Harmonic} has the following consequence.

\begin{theorem}
\label{Thm:Lifting}
Let $\Sigma$ be a tropical linear series of rank $1$ on $\Gamma$ and let $K$ be an algebraically closed and  nontrivially valued field of residue characteristic zero.  Then there is a curve $X$ over $K$ whose skeleton has underlying metric graph $\Gamma$ (possibly with vertices of positive genus) and a linear series of rank $1$ on $X$ whose tropicalization is $\Sigma$.
\end{theorem}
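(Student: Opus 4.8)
The plan is to realize $\Sigma$ as the tropicalization of the pullback of $\mathcal{O}_{\mathbb{P}^1}(1)$ under a degree $d$ morphism $X \to \mathbb{P}^1$ produced by the lifting theory for harmonic morphisms of Amini--Baker--Brugall\'e--Rabinoff \cite{ABBR2}; here we may take $d = \deg(D)$ after replacing $D$ by $D$ minus the base locus of $\Sigma$, so that $\Sigma$ is base-point free. First I would apply Theorem~\ref{Thm:Rank1FG} and Corollary~\ref{Cor:WeakIsStrong}: $\Sigma$ is finitely generated, say $\Sigma = \langle \varphi_0, \ldots, \varphi_n \rangle$ with the $\varphi_i$ among the functions $\varphi^E_j$, and it is a \emph{strong} tropical linear series, so by property (5) and Lemma~\ref{Lem:Prop6} it carries a valuated matroid $M$ of rank $2$ on $\{\varphi_0, \ldots, \varphi_n\}$ each of whose valuated circuits yields a tropical dependence. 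Since $M$ has rank $2$, it corresponds to a point of the tropical Grassmannian $\trop \mathrm{Gr}(2, n+1)$, and every such point is a genuine tropicalization (see \cite{MaclaganSturmfels}); thus $M$ is realizable over $K$, i.e.\ there is a line $L \cong \mathbb{P}^1 \subset \mathbb{P}^n_K$ with $\Trop(L) = \mathcal{B}(M)$.

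Next I would invoke Proposition~\ref{Prop:Harmonic}: there is a tropical modification $\widetilde{\Gamma}$ of $\Gamma$, obtained by attaching finitely many infinite rays at the points in the supports of divisors of $\vert\Sigma\vert$, and a balanced map $\widetilde{\Phi} : \widetilde{\Gamma} \to \mathcal{B}(M)$ extending $\Phi = (\varphi_0 : \cdots : \varphi_n)$. Identifying $\mathcal{B}(M)$ with $\Trop(L) = \Trop(\mathbb{P}^1)$ and postcomposing, I would read off from $\widetilde{\Phi}$ a finite harmonic morphism $\widetilde{\phi} : \widetilde{\Gamma} \to T$ of degree $d$ onto a metric tree $T$, viewed as an augmented metric graph with genus function initially zero; the attached rays map to the unbounded rays of $\Trop(\mathbb{P}^1)$ and record the ramification/marked-point data.

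Then, since $K$ has residue characteristic zero, every morphism in sight is tamely ramified, and I claim the local Hurwitz existence condition of \cite{ABBR2} holds at every vertex $v$ of $\widetilde{\Gamma}$ once we allow $v$ to acquire positive genus: given the partition of $d$ recording the local degrees of $\widetilde{\phi}$ along the edges at $v$, Riemann existence produces a connected degree $d$ cover of $\mathbb{P}^1$ with that ramification profile over one point and simple branching elsewhere, the genus of the cover being whatever Riemann--Hurwitz dictates. Consequently \cite[Proposition~3.3]{ABBR2} applies and $\widetilde{\phi}$ lifts to a finite morphism $\phi : X \to \mathbb{P}^1$ over $K$, where $X$ is a smooth proper curve whose skeleton is $\widetilde{\Gamma}$ with this genus augmentation; forgetting the marked points attached along the rays, the skeleton of $X$ has underlying metric graph $\Gamma$, possibly with vertices of positive genus.

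Finally I would set $W := \phi^\ast H^0(\mathbb{P}^1, \mathcal{O}_{\mathbb{P}^1}(1)) \subseteq H^0(X, \phi^\ast\mathcal{O}_{\mathbb{P}^1}(1))$, a rank $1$ linear series of degree $d$ on $X$, and verify that $\trop(W) = \Sigma$. By construction and the compatibility of tropicalization with pullback, the tropicalizations of the pulled-back coordinate sections are exactly the functions $\varphi_i$, so $\Sigma = \langle \varphi_0, \ldots, \varphi_n \rangle \subseteq \trop(W)$; since $\trop(W)$ is itself a rank $1$ tropical linear series for the same divisor class (Proposition~\ref{Prop:Tropicalization}), the reverse inclusion follows by the rigidity of rank $1$ --- for a generic point $v \in \Gamma$ there is a \emph{unique} divisor of $\vert\Sigma\vert$ supported at $v$, arising from some $\langle \varphi^E_0, \varphi^E_1 \rangle$ (Proposition~\ref{Prop:FiniteVertexSet}), so $\trop(W)$ cannot be strictly larger once one knows $\vert\Sigma\vert$ is closed and connected (Lemmas~\ref{Lem:FGClosed} and~\ref{Lem:Connect}). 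The main obstacle is exactly this last point: it is not formal that a rank $1$ tropical linear series containing $\Sigma$ must equal $\Sigma$, and pinning down the tropicalization of the lifted linear series requires careful bookkeeping of the modification, the augmentation, and the marked points, together with the rank $1$ rigidity established above; by contrast the earlier steps are essentially an assembly of Proposition~\ref{Prop:Harmonic} with the char-$0$ case of the ABBR lifting theorem.
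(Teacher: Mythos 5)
Your proposal is correct and follows essentially the same route as the paper's proof: finite generation (Theorem~\ref{Thm:Rank1FG}), the rank $2$ valuated matroid from Corollary~\ref{Cor:WeakIsStrong} whose Bergman space $\mathcal{B}(M)$ is a metric tree, the balanced extension $\widetilde{\Phi}$ from Proposition~\ref{Prop:Harmonic} (finite because $s_\zeta[1]-s_\zeta[0]>0$), and then the residue-characteristic-zero lifting theorem of \cite{ABBR2}. You supply more detail than the paper does at the last step --- in particular the verification that the tropicalization of the lifted pencil is exactly $\Sigma$ via the rank $1$ rigidity of Proposition~\ref{Prop:FiniteVertexSet} together with Lemmas~\ref{Lem:FGClosed} and~\ref{Lem:Connect} --- which the paper leaves implicit in its citation of \cite{ABBR2}.
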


\begin{proof}
By Theorem~\ref{Thm:Rank1FG}, $\Sigma$ is finitely generated, and by Lemma~\ref{Lem:UniqueGen} there is a unique minimal generating set up to addition of scalars.  Let $\{ \varphi_0 , \ldots , \varphi_n \}$ be this generating set.  By Corollary~\ref{Cor:WeakIsStrong}, this set determines a rank 2 valuated matroid $M$.  The tropical variety $\mathcal{B}(M)$ is a metric tree.  By Proposition~\ref{Prop:Harmonic}, there exists a tropical modification $\widetilde{\Gamma}$ of $\Gamma$ and a balanced map $\widetilde{\Phi} : \widetilde{\Gamma} \to \mathcal{B}(M)$ such that $\widetilde{\Phi} \vert_{\Gamma} = \Phi$.  The map $\widetilde{\Phi}$ is finite because $d_{\zeta} (\widetilde{\Phi}) = s_{\zeta} [1] - s_{\zeta} [0] > 0$ for all tangent vectors $\zeta$.  The result then follows from \cite{ABBR2}.
\end{proof}

\begin{example}
We return to Example~\ref{Ex:Interval}, and consider the harmonic morphism $\widetilde{\Phi}$ constructed in Proposition~\ref{Prop:Harmonic}.  Recall that $\Gamma$ is an interval with left endpoint $x$ and right endpoint $y$, $D=2x$, and $\Sigma \subset R(D)$ is a rank 1 tropical linear series.  We let $\varphi_0 \in \Sigma$ satisfy $s_x (\varphi_0) = s_x [0]$ and $\varphi_1 \in \Sigma$ satisfy $s_y (\varphi_1) = s_y [1]$.  Recall that $\ddiv (\varphi_0) +D = x + w_0$ and $\ddiv (\varphi_1) +D = w_1 + y$.

In the case where $w_0$ is to the left of $w_1$, we have $\Sigma = \langle \varphi_0 , \varphi_1 \rangle$, and define $\widetilde{\Gamma}$ by attaching infinite legs to $\Gamma$ at $x, y, w_0,$ and $w_1$.  The map $\widetilde{\Gamma} \to \mathbb{TP}^1$ is depicted in Figure~\ref{Fig:EasyCase}.

\begin{figure}[h!]
\begin{tikzpicture}

\draw (0,4)--(5,4);
\draw (0,3)--(2,4);
\draw (3,4) -- (5,3);
\draw (0,2) -- (5,2);

\draw (2.5,3) node {{$\Big\downarrow$}};

\draw (0,4.25) node {{\small $x$}};
\draw (5,4.25) node {{\small $y$}};
\draw (2,4.25) node {{\small $w_0$}};
\draw (3,4.25) node {{\small $w_1$}};
\draw (2.5,4.25) node {{\small $2$}};

\end{tikzpicture}
\caption{The harmonic morphism $\widetilde{\Phi}$ when $\Sigma = \langle \varphi_0 , \varphi_1 \rangle$.}
\label{Fig:EasyCase}
\end{figure}

In the case where $w_0$ is to the right of $w_1$, it is shown in Example~\ref{Ex:Interval} that there exists a function $\varphi_2 \in \Sigma$ that has slope 2 everywhere to the left of a point $z \in \Gamma$ and slope 0 everywhere to the right of $z$, and $\Sigma = \langle \varphi_0 , \varphi_1 , \varphi_2 \rangle$.  The tropical linear space $\mathcal{B} (M)$ is the standard tropical line in $\mathbb{TP}^2$:
\[
\mathcal{B}(M) = \{ (x_0 , x_1 , x_2 ) \in \mathbb{T}\mathbb{P}^2 \mbox{ } \vert \mbox { } \min \{ x_0 , x_1 , x_2 \} \text{ occurs at least twice } \} . 
\]
The map $\widetilde{\Gamma} \to \mathcal{B}(M)$ is depicted in Figure~\ref{Fig:HardCase}.

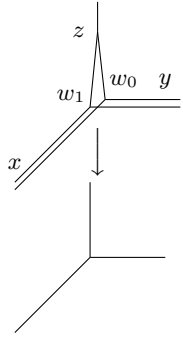
\begin{figure}[h!]
\begin{tikzpicture}

\draw (0,0)--(1,1);
\draw (1,1)--(1.1,2);
\draw (1.1,2) -- (1.2,1.1);
\draw (1.1,2) -- (1.1,2.4);
\draw (1.2,1.1) -- (2.2,1.1);
\draw (1,1)--(2.2,1);
\draw (1.2,1.1)--(0,-0.1);

\draw (1.1,0.4) node {{$\Big\downarrow$}};

\draw (0,-2)--(1,-1);
\draw (1,-1)--(1,0);
\draw (1,-1)--(2,-1);

\draw (0,0.25) node {{\small $x$}};
\draw (2,1.35) node {{\small $y$}};
\draw (1.45,1.35) node {{\small $w_0$}};
\draw (0.75,1.15) node {{\small $w_1$}};
\draw (0.85,2) node {{\small $z$}};

\end{tikzpicture}
\caption{The harmonic morphism $\widetilde{\Phi}$ when $\Sigma \neq \langle \varphi_0 , \varphi_1 \rangle$.}
\label{Fig:HardCase}
\end{figure}

\end{example}

\section{Open Questions}

\subsection{Questions}
\label{Sec:Qs}

We close the paper with some open questions about tropical linear series.  First, recall the questions about finite generation from the introduction.

\begin{question}[Question~\ref{Q:FG}]
Are all tropicalizations of linear series on algebraic curves finitely generated as tropical modules?
\end{question}

\begin{question}[Question~\ref{Q:FG-tropicalizations}]
Are all tropical linear series finitely generated as tropical modules?  If not, what about strong tropical linear series?
\end{question}

At present, we do not know which of the various topological and algebraic properties of tropical linear series imply the others.

\begin{question}
Let $\Sigma$ be a tropical linear series of rank $r$. 
\begin{enumerate}
\item  Is $\vert \Sigma \vert$ necessarily closed?  Is it definable?  Is it of equidimension $r$?
\item  If $\vert \Sigma \vert$ is closed, does it follow that $\vert \Sigma \vert$ is definable?  Equidimensional?
\item  What if we assume $\vert \Sigma \vert$ is both closed and definable? Or a strong tropical linear series?  Or the tropicalization of a linear series?
\item  Is every tropical linear series a strong tropical linear series?
\item Are there other implications among the properties in the definitions of tropical linear series and strong tropical linear series, e.g., do properties (1) and (5) together imply (3) and (4)?
\end{enumerate}
\end{question}

The realizability problem for tropical linear series is certainly interesting.  Theorem~\ref{Thm:Lifting} and Example~\ref{Ex:Matroid} represent early steps in this direction, and are highly suggestive.

\begin{question}
Aside from realizability of the associated valuated matroids, what other obstructions are there to realizing a strong tropical linear series as the tropicalization of a linear series?
\end{question}

With an eye toward the questions above, a few more technical questions could be helpful.  For example, recall that Lemma~\ref{Lem:GeneratorDependence} allows us to check property (2) by checking it on a generating set.

\begin{question}
Is there an analogue of Lemma~\ref{Lem:GeneratorDependence} for property (5), i.e. does property (5) hold for a tropical linear series if that analogous statement holds for a generating set?
\end{question}

\noindent As a first case, it would be interesting to determine whether or not (5) holds for $\Sigma_M$ in Example~\ref{Ex:Matroid}.

Similarly, it would be interesting to know if there is an analogue of Proposition~\ref{Prop:FiniteVertexSet} for tropical linear series of higher rank.  Such a result could be useful for proving that higher-rank tropical linear series are finitely generated.

\begin{question}
Let $\Sigma \subseteq R(D)$ be a tropical linear series of rank $r$.  Does there exist a finite set $W \subset \Gamma$ such that, for all divisors $E$ of degree $r$ whose support is disjoint from $W$, there is a unique $D' \in \vert \Sigma \vert$ such that $D' - E \geq 0$?
\end{question}

For a divisor $D$ on a metric graph $\Gamma$, define the \emph{TLS-rank} to be
\[
r_{TLS}(D) := \max \{ r \mbox{ } \vert \mbox{ } R(D) \text{ contains a tropical linear series of rank } r \}.
\]
\begin{question}
Does the TLS-rank satisfy Riemann-Roch?  In other words, does the following equality hold for all divisors $D$ on a metric graph of genus $g$:
\[
r_{TLS} (D) - r_{TLS} (K_{\Gamma} - D) = \deg (D) - g + 1 ?
\]
\end{question}

\noindent Given a metric graph $\Gamma$, one defines the set:
\[
W^r_d (\Gamma) = \{ D \in \Pic^d (\Gamma) \mbox{ } \vert \mbox{ } r(D) \geq r \} .
\]
We define:
\[
G^r_d (\Gamma) := \{ (D,\Sigma) \mbox{ } \vert \mbox{ } D \in \Pic^d (\Gamma), \Sigma \subseteq R(D) \text{ a tropical linear series of rank } r \}.
\]
The image of $G^r_d (\Gamma)$ in $\Pic^d (\Gamma)$ is
\[
\widetilde{W}^r_d (\Gamma) = \{ D \in \Pic^d (\Gamma) \mbox{ } \vert \mbox{ } r_{TLS} (D) \geq r \} .
\]
The set $\widetilde{W}^r_d (\Gamma)$ is contained in $W^r_d (\Gamma)$.  As shown in Examples~\ref{Ex:Harmonic} and~\ref{Ex:LoopOfLoops}, this containment is sometimes strict.

\begin{question}
Do the sets $G^r_d (\Gamma)$ and $\widetilde{W}^r_d (\Gamma)$ have the structure of tropical varieties?
\end{question}

Our interest in tropical linear series was partly motivated by the study of multiplication maps in \cite{M23, M13}.  To that end, we ask the following.

\begin{question}
Given tropical linear series $\Sigma_1 \subseteq R(D_1)$, $\Sigma_2 \subseteq R(D_2)$, define the multiplication map $\mu \colon \Sigma_1 \times \Sigma_2 \to R(D_1 + D_2)$ by $\mu (f_1 , f_2) = f_1 + f_2$.
\begin{enumerate}
\item  Is there necessarily a tropical linear series $\Sigma \subset R(D_1 + D_2)$  that contains the image of $\mu$?
\item  More generally, is there a procedure that determines whether a tropical module $\Sigma \subseteq R(D)$ is contained in a tropical linear series of a given rank?
\end{enumerate}
\end{question}

\begin{question}
Is there a natural converse to Proposition~\ref{Prop:Harmonic}, i.e., does every harmonic map from a tropical modification of $\Gamma$ to a tropical linear space come from a finitely generated strong tropical linear series?
\end{question}

\bibliography{TLS}

\end{document}